\newtheorem{theorem}{Theorem}[section]
\newtheorem{corollary}[theorem]{Corollary}
\newtheorem{lemma}[theorem]{Lemma}
\newtheorem{proposition}[theorem]{Proposition}
\theoremstyle{definition}
\newtheorem{definition}[theorem]{Definition}
\newtheorem{remark}[theorem]{Remark}
\newcommand{\bu}{\bar{u}}
\newcommand{\E}{\mathcal{E}}
\newcommand{\R}{\mathbb{R}}
\newcommand{\N}{\mathbb{N}}
\newcommand{\C}{\mathbb{C}}
\newcommand{\Pc}{\mathcal{P}}
\newcommand{\Dz}{\dot{\mathbf{H}}^{2}(\R^{d})}
\newcommand{\dz}{\dot{\mathbf{H}}^{2}}
\newcommand{\psib}{\mbox{\boldmath$\psi$}}
\newcommand{\phib}{\mbox{\boldmath$\phi$}}
\newcommand{\phis}{\phi\left(\frac{|x|}{R}\right)}
\newcommand{\ub}{\mathbf{u}}
\newcommand{\uck}{u_{ck}}
\newcommand{\partialj}{\partial_{x_j}}
\newcommand{\ubc}{\mathbf{u}_{c,0}}
\newcommand{\vb}{\mathbf{v}}
\newcommand{\zb}{\mathbf{z}}
\newcommand{\gb}{\mathbf{g}}
\newcommand{\Hb}{\mathbf{H}}
\newcommand{\wb}{\mathbf{w}}
\newcommand{\Lb}{\mathbf{L}}
\newcommand{\Wb}{\mathbf{W}}
\newcommand{\Mb}{\mathbf{M}}
\newcommand{\Bb}{\mathbf{B}}
\newcommand{\Nb}{\mathbf{N}}
\newcommand{\Zb}{\mathbf{Z}}
\newcommand{\Cb}{\mathbf{C}}
\newcommand{\kl}{k=1,\ldots,l}
\newcommand{\sumk}{\sum_{k=1}^l}
\newcommand{\sumj}{\sum_{j=1}^l}
\newcommand{\summ}{\sum_{m=1}^l}
\newcommand{\sumjd}{\sum_{j=1}^d}
\newcommand{\RE}{\mathrm{Re}}
\newcommand{\IM}{\mathrm{Im}}
\newcommand{\bbc}{\!\!\big\bracevert\!\!}
\newcommand{\Mmb}{\mathcal{M}_+^b(\R^d)}
\newcommand{\Mmu}{\mathcal{M}_+^1(\R^d)}
\newcommand{\cfe}{\overset{\ast}{\rightharpoonup}}
\newcommand{\chir}{\chi_R^r}
\newcommand{\xR}{\left(\frac{|x|}{R}\right)}
\newcommand{\gk}{\gamma_k}
\newcommand{\ak}{\alpha_k}
\newcommand{\V}{\mathcal{V}}
\newcommand{\GR}{\Gamma_{\phi_R}}
\numberwithin{equation}{section}
\begin{document}
	\title[Biharmonic NLS system]{ENERGY CRITICAL FOURTH-ORDER SCHRÖDINGER EQUATION SYSTEM WITH POWER-TYPE NONLINEARITIES IN THE RADIAL CASE}
	
	
	\author[M. Hespanha]{Maicon Hespanha}
    \author[R. Scarpelli]{Renzo Scarpelli}\address{ICEx, Universidade Federal de Minas Gerais, Av. Antônio Carlos, 6627, Caixa Postal 702, 30123-970 Belo Horizonte, MG, Brazil}
	\email{mshespanha@gmail.com}
    \email{renzoscb123@gmail.com}
	
	\subjclass[2020]{35Q44, 35P25, 35B44}
	\keywords{Energy-critical; Nonlinear Fourth Order Schrödinger System; Ground State Solutions; Blow-up; Scattering }
	
	\begin{abstract}
  In this paper, we study a system of focusing fourth-order Schrödinger equations in the energy-critical setting with radial initial data and general power-type nonlinearities. The main idea is to generalize the analysis of such systems: we first establish several hypotheses on the nonlinearities and prove their implications. These implications are then used to establish a local well-posedness result in $H^2(\mathbb{R}^d)$ and and to prove the existence of ground state solutions. Using a virial argument, we demonstrate a blow-up result for initial data with negative energy or with kinetic energy exceeding that of the ground state. Finally, employing the concentration-compactness/rigidity method, we prove a scattering result for solutions whose energy and kinetic energy are below those of the ground state.
	
	\end{abstract}
	
	\maketitle
	
	\section{Introduction}

The fourth-order Schrödinger equation 
\begin{equation}\label{karpmanequa}
	i\partial_t u+\Delta^2 u+\epsilon \Delta u+\lambda |u|^{p-1}u=0
\end{equation}
was introduced by \cite{Karpman} and \cite{Karpmn2} to consider the role of small fourth-order dispersion terms in the propagation of laser beams in with Kerr nonlinearity. The author established  necessary and sufficient conditions of the stability with respect to small perturbations and obtained necessary conditions of the stabilization of self-focusing and blow-up by high-order dispersion. In \cite{AKS}, the authors proved sharp dispersive estimates for the linear group associated to $i\partial_t +\Delta^2+\epsilon \Delta$, with $\epsilon\in\R$ essentially given by $-1,\,0,$ or $1$. In \cite{FIP}, the authors described various properties of the equation in subcritical regime, with part of the analysis relying on numerical developments.
When $d\geq 5$, and $f(u)=|u|^{\frac{8}{d-4}}u$, the criticality in the energy space appears with the power exponent $2d/(d-4)$ for the embedding of $H^2$ into Lebesgue's spaces. This case was studied in \cite{Pausader}, where was proved local well-posedness and stability in a general setting and global well-posedness and scattering in the defocusing ($\lambda >0$) case for radially symmetrical initial data. In \cite{Pausader2}, the author investigated the focusing ($\lambda <0$) energy-critical fourth-order Schrödinger equation in the radial setting and proved existence and scattering for solutions of energy and $\dot{H}^2$-norm below that of the ground state. A similar result was obtained independently by \cite{MXZ}.  The global well posedness and scattering in $H^s$ for small data it was proved in \cite{GW}. In \cite{HHW1} and \cite{HHW2}, the authors discussed the Cauchy problem in a high-regularity setting.

In this paper we will consider the energy-critical case for the following Cauchy problem to the fourth-order coupled nonlinear Schrödinger system in the homogeneous $(\epsilon=0)$ and focusing $(\lambda<0)$ regime.
\begin{equation}\label{SISTB}
	\left\{\begin{array}{ll}
		i\alpha_k\partial_t u_k +\gamma_k \Delta^2 u_k - f_k(\mathbf{u})=0,\\
		(u_1(0,x),...,u_l(0,x))=(u_{10},...,u_{l0})\in \Dz,\quad k=1,...,l,
	\end{array}\right.
\end{equation}
where $\ub=(u_1,...,u_l)$ is a complex-valued function in the space-time $\R\times\R^d$, with $5\leq d\leq 16$,  $\Delta^2$ stands for the biharmonic operator and $\alpha_k, \gamma_k>0$. Also, the nonlinearities $f_k:\C^l\rightarrow \C$ satisfy a power-type growth and are some suitable assumptions that will be displayed later. One may notice that we have a constraint in the dimension, the lower bound is a consequence of the energy-critical regime and the power of the nonlinearities. The upper bound, is a consequence of a necessary condition to apply Strichartz-type estimates (see Proposition \ref{strichartz} and Remark \ref{dimension}).

Motivated by the work in NLS systems made in \cite{NoPa}-\cite{NoPa5}, the idea of this paper is provide sufficient conditions on the nonlinearities $f_k$ to study the dynamics of a system of fourth-order nonlinear Shcrödinger equations with general power-type nonlinearities. Our intention here is study the energy critical regime, that is, the exponent of the power-type growth will be $\frac{d+4}{d-4}$. As mentioned before, to reach our goal will be necessary assume suitable conditions on the nonlinear terms $f_k$. Therefore, in what follows we will assume  

	\newtheorem{thmx}{}
\renewcommand\thethmx{(H1)}
\begin{thmx}\label{H1}
	We have
	\begin{align*}
		f_{k}(\mathbf{0})=0, \qquad  k=1,\ldots,l.
	\end{align*}
\end{thmx}

\renewcommand\thethmx{(H2)}
\begin{thmx}\label{H2}
	For any $\mathbf{z},\mathbf{z}'\in \C^{l}$ we have
	\begin{equation*}
		\begin{split}
			\left|\frac{\partial }{\partial z_{m}}[f_{k}(\mathbf{z})-f_{k}(\mathbf{z}')]\right|+ \left|\frac{\partial }{\partial \overline{z}_{m}}[f_{k}(\mathbf{z})-f_{k}(\mathbf{z}')]\right|&\leq C \sum_{j=1}^{l}|z_{j}-z_{j}'|^{\frac{8}{d-4}},\qquad k,m=1,\ldots,l,
		\end{split}
	\end{equation*}
\end{thmx}

\renewcommand\thethmx{(H3)}
\begin{thmx}\label{H3}
	There exists a function $F:\C^{l}\to \C$,  such that
	\begin{equation*}
		f_{k}(\mathbf{z})=\frac{\partial F}{\partial \overline{z}_{k}}(\mathbf{z})+\overline{\frac{\partial F }{\partial z_{k}}}(\mathbf{z}),\qquad k=1,\ldots,l.
	\end{equation*}
\end{thmx}

\renewcommand\thethmx{(H4)}
\begin{thmx}\label{H4}
	For any $\theta\in\R$ and $\zb\in\C^l$,
	$$
	\RE F(e^{\frac{\alpha_1}{\gamma_1}}z_1,...,e^{\frac{\alpha_l}{\gamma_1}}z_l)=\RE F(\zb)
	$$
	
\end{thmx}

\renewcommand\thethmx{(H5)}
\begin{thmx}\label{H5}
	Function $F$ is homogeneous of degree $\frac{2d}{d-4}$, that is, for any $\mathbf{z}\in \mathbb{C}^{l}$ and $\lambda >0$,
	\begin{equation*}
		F(\lambda \mathbf{z})=\lambda^{\frac{2d}{d-4}}F(\mathbf{z}).
	\end{equation*}
\end{thmx}

\renewcommand\thethmx{(H6)}
\begin{thmx}\label{H6}
	There holds
	\begin{equation*}
		\left|\mathrm{Re}\int_{\R^{d}} F(\ub)\;dx\right|\leq \int_{\R^{d}} F(\!\!\big\bracevert\!\! \mathbf{u}\!\!\big\bracevert\!\!)\;dx.
	\end{equation*}
\end{thmx}

\renewcommand\thethmx{(H7)}
\begin{thmx}\label{H7}
	Function $F$ is real valued on $\R^l$, that is, if $(y_{1},\ldots,y_{l})\in \R^{l}$ then
	\begin{equation*}
		F(y_{1},\ldots,y_{l})\in \R.
	\end{equation*}
	Moreover, functions	$f_k$ are non-negative on the positive cone in $\mathbb{R}^l$, that is, for $y_i\geq0$, $i=1,\ldots,l$,
	\begin{equation*}
		f_{k}(y_{1},\ldots,y_{l})\geq0.
	\end{equation*}	
\end{thmx}

Summarizing, we use hypothesis \ref{H1} and \ref{H2} to prove local well-posedness for \eqref{SISTB}. The Mass and Energy of \eqref{SISTB} are, respectively, given by
\begin{equation}\label{mass}
	M(\ub)=	\sumk\frac{\alpha_k^2}{\gamma_{k}}\Vert u_k\Vert_{L^2}
\end{equation}
and 
\begin{equation}\label{energy}
	E(\ub)=\frac{1}{2}\sumk \gamma_k \Vert \Delta u_k\Vert_{L^2}^2  -\RE \int F(\ub)dx:=\frac{1}{2}K(\ub)-P(\ub),
\end{equation}
where $K$ and $P$ are respectively called by kinetic and potential energy. To prove conservation of the mass and the energy its enough to assume \ref{H3}-\ref{H5} (see \cite[Lemma 2.1]{FH} for details). 

We start by defining the concept of solution to \eqref{SISTB}. 
\begin{definition}\label{solution}
	By a solution of \eqref{SISTB}, we will understand $\ub\in \mathbf{C}_t\Hb^2(I\times\R^d)$, where $I\subset \R$ is a non-empty time interval containing $0$ and satisfies the Duhamel formula
	\begin{equation}\label{defsol}
		u_k(t)=U_k(t)u_{k0}-i\int_0^tU_k(t-s)f_k(\ub(s))ds,\quad \kl,\,t\in I
	\end{equation}
	where $U_k(t)=e^{it\frac{\gamma_k}{\alpha_k}\Delta^2}$, $\kl$, is the biharmonic Schrödinger propagator. The time interval $I$ is said to be the lifespan of $\ub$. We say that $\ub$ is a maximal-lifespan solution if the solution cannot be extended to an interval $J\supset I$ strictly larger than $I$. We say that the solution is global if $I=\R$.
\end{definition}
It is easy to see that for $\nu>0$, the equation \eqref{SISTB} is preserved by the scaling
\begin{equation}\label{scaling}
f_\nu(t,x)=\nu^{\frac{d-4}{2}}f(\nu^4 t, \nu x), \quad \nu>0.
\end{equation}
 This scaling preserves the homogeneous $\dot{H}^{2}$-norm of the original solution $\ub(t)$. Moreover, the equation \eqref{SISTB} also satisfy time reversal and time translation symmetries. 
 
 Another important concepts that will be discussed throughout the paper is the following notions of Scattering size and blow-up. 
 \begin{definition}[Scattering size]\label{scadef}
 	Let $\mathbf{u}$ be a solution of \eqref{SISTB}. The scattering size of  $\mathbf{u}$ on a time interval $I$ is defined as
 	$$
 	S_I(\mathbf{u}):=\sum_{k=1}^l\int_I\int_{\mathbb{R}^d}|u_k(t,x)|^{\frac{2(d+4)}{d-4}}dxdt
 	$$
 \end{definition}
 
 \begin{definition}[Blow-up] We say that a solution $\mathbf{u}$ of \eqref{SISTB} blows-up forward in time if there exists $t_1\in I$ such that 
 	$$
 	S_{[t_1,\sup I)}(\mathbf{u})=\infty,
 	$$
 	and $\mathbf{u}$ blows-up backward in time, if there exists $t_2\in I$ such that
 	$$
 	S_{(\inf I,t_2]}(\mathbf{u})=\infty.
 	$$
 	We say that $\mathbf{u}$ blows-up in time if it blows-up both forward and backward in time.
 \end{definition}

 Our main interest is to give conditions to deal with the dichotomy global existence versus blow-up in finite time when the initial data is radial. To this, we need to set the threshold for such dichotomy. Is well know, from NLS equation that ground state solutions plays a important role in this kind of studies. So, the next step is to guarantee the existence of such type of solutions.  To give a precise description of this solution we start with standing waves solutions, that are a special kind with the form
 \begin{equation}\label{standingwaves}
 	u_k(t,x)=e^{i\frac{\sigma_k}{2}\omega t}\psi_k(x),\quad\kl,
 \end{equation}
 where $\omega\in\R$ and $\psi_k$ are real-valued functions decaying to zero at infinity. By Lemma \ref{GC}, one may see that the $\psi_k$ satisfy the following semilinear elliptic system
 \begin{equation}\label{eliptic}
 	-\gamma_k\Delta^2\psi_k=f_k(\psib),\quad \kl.
 \end{equation}
 
We will see in Section 4 that solutions of \eqref{eliptic} can be seen as critical points of the energy functional $E$. More precisely

\begin{definition}\label{weaksolution}
	We say that $\psib\in\Dz$ is a weak solution of \eqref{eliptic} if is critical point of $E$, that is, if for any $\gb\in\Dz$,
	\begin{equation}
		\gamma_k\int \Delta \psi_k\Delta g_k=\int f_k(\psib)g_kdx,\quad \kl.
	\end{equation}
	Among all the solutions of \eqref{eliptic}, the ground state solutions is the one that minimizes $E$. Precisely, we say that $\psib\in\Dz$ is a \textit{ground state} solution if
	$$
	E(\psib)=\inf\{E(\phib);\, \phib\in\mathcal{C}\},
	$$ 
	where $\mathcal{C}$ is the set of non trivial solutions of \eqref{eliptic}. We refer the set of ground state solutions by $\mathcal{G}$.
\end{definition} 
 
Our main result in this direction is the following:
\begin{theorem}\label{ESGS} There exists at least one ground state solution $\psib$ for system \eqref{eliptic}, that is, $\mathcal{G}$ is non-empty.
\end{theorem}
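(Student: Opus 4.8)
The plan is to realize the ground state as the extremizer of the scale-invariant Sobolev-type inequality attached to the elliptic system \eqref{eliptic}. Recall from \eqref{energy} that $E(\ub)=\tfrac12 K(\ub)-P(\ub)$, with $K(\ub)=\sum_{k=1}^{l}\gamma_k\|\Delta u_k\|_{L^2}^2$ and $P(\ub)=\RE\int F(\ub)\,dx$. Hypotheses \ref{H1}--\ref{H2} give the pointwise growth $|f_k(\zb)|\lesssim|\zb|^{\frac{d+4}{d-4}}$, hence $|F(\zb)|\lesssim|\zb|^{\frac{2d}{d-4}}$ together with the local Lipschitz bound $|F(\zb)-F(\zb')|\lesssim(|\zb|^{\frac{8}{d-4}}+|\zb'|^{\frac{8}{d-4}})|\zb-\zb'|$; combined with the Sobolev embedding $\dz(\R^d)\hookrightarrow L^{\frac{2d}{d-4}}(\R^d)$ this makes
\[
\mathcal I:=\inf\left\{\,K(\ub)\ :\ \ub\in\Dz,\ P(\ub)=1\,\right\}
\]
a positive real number, the constraint set being nonempty since, by \ref{H7}, $F\ge 0$ on the positive cone of $\R^l$, so $P(\ub)>0$ for suitable $\ub$. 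As $K$ is $2$-homogeneous and, by \ref{H5}, $P$ is $\tfrac{2d}{d-4}$-homogeneous, $\mathcal I$ equals the infimum of the fully scale- and dilation-invariant quotient $K(\ub)/P(\ub)^{\frac{d-4}{d}}$ over $\{P(\ub)>0\}$; a one-variable computation along rays $t\mapsto t\ub$ then shows that minimizing $E$ over the Nehari set $\mathcal N=\{\ub\neq 0:K(\ub)=\tfrac{2d}{d-4}P(\ub)\}$ is equivalent to computing $\mathcal I$, with $\min_{\mathcal N}E=c_d\,\mathcal I^{d/4}$ for an explicit $c_d>0$. Finally, pairing \eqref{eliptic} with a nontrivial solution and using Euler's identity (from \ref{H5}) yields the Nehari identity $K=\tfrac{2d}{d-4}P$ on $\mathcal C$, i.e. $\mathcal C\subset\mathcal N$. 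Hence it suffices to produce an extremizer of $\mathcal I$ and to check that it rescales to a solution of \eqref{eliptic}.

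The substantial step is the attainment of $\mathcal I$, which is delicate because the problem is energy-critical and $\Dz\hookrightarrow L^{\frac{2d}{d-4}}$ is not compact. Let $(\ub_n)\subset\Dz$ be a minimizing sequence, $P(\ub_n)=1$, $K(\ub_n)\to\mathcal I$; it is bounded in $\Dz$. I would apply a linear profile decomposition adapted to $\Dz\hookrightarrow L^{\frac{2d}{d-4}}(\R^d)$, with dilation and translation parameters, writing $\ub_n=\sum_{j=1}^{J}\Phi_n^{j}+R_n^{J}$ with asymptotic orthogonality of the profiles and $\limsup_n\|R_n^{J}\|_{L^{2d/(d-4)}}\to 0$ as $J\to\infty$. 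The relevant consequences are: (i) the Pythagorean expansion $K(\ub_n)=\sum_j K(\Phi^{j})+K(R_n^{J})+o_n(1)$; (ii) the decoupling $\lim_J\limsup_n|P(\ub_n)-\sum_{j\le J}P(\Phi^{j})|=0$, which follows from \ref{H5}, the growth and Lipschitz bounds on $F$, and the spatial/scale separation of the profiles (so cross terms and the contribution of the $L^{2d/(d-4)}$-small remainder are negligible); and (iii) the inequality $K(\Phi^{j})\ge\mathcal I\,(\max\{P(\Phi^{j}),0\})^{\frac{d-4}{d}}$ for each $j$, immediate from the definition of $\mathcal I$. Writing $b_j:=\max\{P(\Phi^{j}),0\}\ge 0$, summing (iii) and using (i)--(ii) gives $\sum_j b_j\ge 1$ and $\sum_j b_j^{\frac{d-4}{d}}\le 1$; since $\tfrac{d-4}{d}\in(0,1)$, each $b_j\le 1$, so $b_j\le b_j^{\frac{d-4}{d}}$ and therefore $1\le\sum_j b_j\le\sum_j b_j^{\frac{d-4}{d}}\le 1$, forcing every $b_j\in\{0,1\}$ with exactly one equal to $1$. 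Tracking back through the $K$- and $P$-budgets then forces exactly one nontrivial profile $\Phi^{1}$, with $P(\Phi^{1})=1$, $K(\Phi^{1})=\mathcal I$, all other profiles zero and $R_n^{J}\to 0$ in $\Dz$. Consequently, after applying the corresponding dilation and translation, $\ub_n$ converges strongly in $\Dz$ to a minimizer $\ub_*$ with $P(\ub_*)=1$ and $K(\ub_*)=\mathcal I$.

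Finally I would upgrade $\ub_*$ to a ground state. The Lagrange multiplier rule at $\ub_*$ — using \ref{H3} to identify $P'(\ub)[\gb]=\RE\int\sum_k f_k(\ub)\overline{g_k}\,dx$, and noting $P'(\ub_*)\neq 0$ because $P(\ub_*)=1$ — gives $2\gamma_k\Delta^2 u_{*,k}=\mu\,f_k(\ub_*)$ in the weak sense, with $\mu=\tfrac{2}{p}\,\mathcal I>0$, $p=\tfrac{2d}{d-4}$, obtained by pairing with $\ub_*$ and invoking Euler's identity. Since \ref{H3} and \ref{H5} make each $f_k$ homogeneous of degree $\tfrac{d+4}{d-4}=p-1$ and the exponent $p-2=\tfrac{8}{d-4}$ is nonzero, there is a unique $c>0$ for which $\psib:=c\,\ub_*$ solves \eqref{eliptic} in the normalization fixed in Section 4; by the variational characterization recorded there (Lemma \ref{GC} and Definition \ref{weaksolution}) this means $\psib\in\mathcal C$. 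Its energy is $E(\psib)=c_d\,(K(\ub_*)/P(\ub_*)^{(d-4)/d})^{d/4}=c_d\,\mathcal I^{d/4}=\min_{\mathcal N}E$, and since $\mathcal C\subset\mathcal N$ this forces $E(\psib)=\inf_{\mathcal C}E$; hence $\psib\in\mathcal G$ and $\mathcal G\neq\emptyset$, proving Theorem \ref{ESGS}. (Neither \ref{H4} nor \ref{H6} is needed for existence; \ref{H6} together with \ref{H7} can be used afterwards to select a minimizer with nonnegative, radially symmetric components.) I expect the main obstacle to be steps (ii)--(iii): setting up the $\Dz$-level profile decomposition and, above all, proving the decoupling of the \emph{general} homogeneous potential $P$ along profiles with adequate control of the remainder and of localization/commutator errors at the $\Dz$ scale — which, unlike the second-order ($\dot H^1$) situation, involves lower-order derivative terms and must be carried out without any closed form for $F$; the positivity of $\mathcal I$, the Nehari reformulation, and the sign-and-exponent bookkeeping for $\mu$ and $c$ are, by contrast, routine.
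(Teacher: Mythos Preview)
Your proposal is correct and takes a genuinely different route from the paper. Both reduce to the same constrained problem $\mathcal I=\inf\{K(\ub):P(\ub)=1\}$ (the paper's \eqref{MinNorm}) and must overcome the non-compactness of $\dz\hookrightarrow L^{2d/(d-4)}$. You handle this with a linear profile decomposition: decouple $K$ and $P$ along asymptotically orthogonal bubbles and use strict concavity of $t\mapsto t^{(d-4)/d}$ to force a single nontrivial profile. The paper instead runs Lions' concentration--compactness in measure-theoretic form: it normalizes via \eqref{341}, rules out vanishing and dichotomy for $\nu_m=F(\vb_m)\,dx$ by means of a localized biharmonic Sobolev inequality (Lemmas \ref{lemma38}--\ref{localsob}), then invokes the second concentration--compactness lemma (Lemma \ref{lema36}) on the possible Dirac concentrations, arriving at the same strict-concavity conclusion in \eqref{356}. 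Your approach is more self-contained and, as you note, does not require \ref{H6}, which the paper uses through Remark \ref{minimization}(i) to restrict to nonnegative minimizing sequences before applying Lemma \ref{lema36}; the paper's approach, in return, yields the localized Sobolev estimate as an independent biharmonic tool. Your passage from the constrained minimizer to an actual solution of \eqref{eliptic} via Lagrange multipliers and the homogeneity rescaling $\psib=c\,\ub_*$ is also more explicit than the paper's, which jumps from ``$\vb$ minimizes $J$'' to ``critical point of $E$'' in one line via \eqref{SJ} without spelling out the multiplier or the rescaling. One minor slip: your Lipschitz bound for $F$ should carry the exponent $\tfrac{d+4}{d-4}$ rather than $\tfrac{8}{d-4}$ (cf.\ \eqref{fk22}); this does not affect the argument.
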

As we will see, the ground state solution is direct related with a critical Sobolev-type inequality. From this inequality we will derive a minimization problem, which will be proved by using the Lions concentration-compactness method, introduced in \cite{lions2}. Once this problem is solved, we can prove the existence of ground state solutions. Also, we also obtain a optimal constant for the critical Sobolev inequality. 
	
	Once the existence of ground state is guaranteed, we can set as the threshold for our dichotomy. More specifically, we will see that if either the kinetic energy of the initial data is above of the kinetic energy of the ground state or the energy is negative, the solution must blow-up in finite time. Our result in this topic reads as follows
	
	\begin{theorem}\label{teoblowup} Let $5\leq d\leq 16$. Suppose that $\ub_0\in\Hb^2(\R^d)$ is radial. We assume either $E(\ub_0)<0$ or
		\begin{equation}\label{blowupcondition}
			E(\ub)<E(\psib)\quad\hbox{and}\quad K(\ub_0)>K(\psib),
		\end{equation}
		where $\psib$ is any ground state solution of \eqref{eliptic}, then the solution $\ub\in\mathbf{C}_t\Hb_x^2([0,T)\times\R^d)$ to \eqref{SISTB} blows-up in finite time, i.e., $T<\infty$ and	$	S_{[0,T)}(\ub)=\infty.$
	\end{theorem}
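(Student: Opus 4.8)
The plan is to run a localized virial (Morawetz‑type) estimate, adapted to the system and to the energy‑critical scaling, combined with the variational information from Theorem~\ref{ESGS}. Throughout I use that radial symmetry is preserved by the flow (uniqueness in the local theory, plus rotational invariance of $\Delta^2$ and of the pointwise maps $f_k$), and I note first that it suffices to prove $T<\infty$: by the blow‑up alternative accompanying the local well‑posedness theory, a solution with finite scattering size on $[0,T)$ extends to a strictly larger interval, so $T<\infty$ forces $S_{[0,T)}(\ub)=\infty$. Next I would record the consequences of the ground‑state equation: multiplying \eqref{eliptic} by $\psi_k$, summing over $k$, and using \ref{H3}, \ref{H5} together with Euler's identity $\sum_k(z_k\partial_{z_k}F+\overline z_k\partial_{\overline z_k}F)=\tfrac{2d}{d-4}F$ gives the Pohozaev relation $K(\psib)=\tfrac{2d}{d-4}P(\psib)$, i.e. $E(\psib)=\tfrac{2}{d}K(\psib)$; together with the sharp inequality underlying Theorem~\ref{ESGS} (in which \ref{H6}--\ref{H7} are used),
$$
P(\ub)\le C_{\mathrm{opt}}\,K(\ub)^{\frac{d}{d-4}},\qquad\text{with equality exactly on }\mathcal G\ \text{(up to symmetries),}
$$
this identifies $\tau_\ast:=K(\psib)$ as the unique maximizer of $g(\tau):=\tfrac12\tau-C_{\mathrm{opt}}\tau^{d/(d-4)}$ on $(0,\infty)$, with $g(\tau_\ast)=E(\psib)$. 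I will use that $g$ is strictly decreasing on $[\tau_\ast,\infty)$ and that $h(\tau):=dg(\tau)-2\tau=\tfrac{d-4}{2}\tau-dC_{\mathrm{opt}}\tau^{d/(d-4)}$ vanishes exactly at $\tau_\ast$ and is $<0$ for $\tau>\tau_\ast$. Then comes the trapping step: if $E(\ub_0)<0$ set $K_1:=0$; under \eqref{blowupcondition}, energy conservation and the sharp inequality give $g(K(\ub(t)))\le E(\ub(t))=E(\ub_0)<E(\psib)=g(\tau_\ast)$, so by continuity of $t\mapsto K(\ub(t))$ and $K(\ub_0)>\tau_\ast$ the quantity $K(\ub(t))$ can never reach $\tau_\ast$, hence $K(\ub(t))>\tau_\ast$, and in fact $K(\ub(t))\ge K_1:=\big(g|_{[\tau_\ast,\infty)}\big)^{-1}(E(\ub_0))>\tau_\ast$, on the whole lifespan.

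Then I would set up the localized virial functional. Fix a radial $\phi\in C^\infty(\R^d)$ with $\phi(x)=|x|^2$ for $|x|\le1$, $\phi$ constant for $|x|\ge3$, and $\nabla^2\phi\le2\,\mathrm{Id}$; put $\phi_R(x):=R^2\phi(x/R)$ and $V_R(t):=\sum_{k=1}^l\tfrac{\alpha_k^2}{\gamma_k}\int_{\R^d}\phi_R(x)\,|u_k(t,x)|^2\,dx\ (\ge0)$. Differentiating along \eqref{SISTB}, the nonlinear contribution to $V_R'$ vanishes \emph{pointwise}, because \ref{H4} yields $\sum_k\tfrac{\alpha_k}{\gamma_k}\IM(\overline{u_k}f_k(\ub))=0$ (the same identity responsible for the conservation of $M$ in \eqref{mass}); differentiating once more — the fourth‑order virial computation, in which $\nabla^2\phi_R\le2\,\mathrm{Id}$ is used to sign the top‑order terms, \ref{H5} to rewrite the nonlinear part as a multiple of $P(\ub)$, and the power growth of $F$ to bound the remainders — should yield, rigorously for $\Hb^2$ data after a standard regularization,
$$
V_R''(t)\le\mathcal A[\ub(t)]+\mathcal E_R[\ub(t)],\qquad \mathcal A[\ub]:=\frac{2c}{d-4}\big(dE(\ub)-2K(\ub)\big),
$$
where $c>0$ is the structural constant of the unlocalized identity $\tfrac{d^2}{dt^2}\sum_k\tfrac{\alpha_k^2}{\gamma_k}\int|x|^2|u_k|^2=\mathcal A[\ub]$ (the displayed form of $\mathcal A$ being forced by $\mathcal A[\psib]=0$, which is Pohozaev), and where the error $\mathcal E_R[\ub(t)]$ is supported in $\{|x|\ge R\}$ with
$$
|\mathcal E_R[\ub(t)]|\lesssim R^{-4}M(\ub_0)+R^{-2}\sum_k\|\nabla u_k(t)\|_{L^2}^2+\sum_k\int_{|x|\ge R}|u_k(t,x)|^{\frac{2d}{d-4}}\,dx .
$$

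Next I would absorb the error and conclude. Conservation of $M$ handles the first term; interpolating $\|\nabla u_k\|_{L^2}^2\le\|u_k\|_{L^2}\|\Delta u_k\|_{L^2}$ and using Young bounds the second by $\epsilon K(\ub(t))+O_\epsilon(R^{-4})$; for the third, the classical radial estimate $\|u_k\|_{L^\infty(|x|\ge R)}\lesssim R^{-(d-1)/2}\|u_k\|_{L^2}^{1/2}\|\nabla u_k\|_{L^2}^{1/2}$, the same interpolation, and conservation of $M$ give $\int_{|x|\ge R}|u_k|^{2d/(d-4)}\lesssim R^{-\kappa}\big(1+K(\ub(t))^{1/(d-4)}\big)$ with $1/(d-4)\le1$ (here $d\ge5$ enters), hence $\le\epsilon K(\ub(t))+C_\epsilon R^{-\kappa'}$. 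Thus $|\mathcal E_R[\ub(t)]|\le\epsilon K(\ub(t))+C(\epsilon,\ub_0)R^{-\kappa'}$, and for $\epsilon$ small enough that $\tfrac{4c}{d-4}-\epsilon>0$, using $K(\ub(t))\ge K_1$,
$$
V_R''(t)\le\frac{2cd}{d-4}E(\ub_0)-\Big(\frac{4c}{d-4}-\epsilon\Big)K(\ub(t))+C(\epsilon,\ub_0)R^{-\kappa'}\le\frac{2cd}{d-4}E(\ub_0)-\Big(\frac{4c}{d-4}-\epsilon\Big)K_1+C(\epsilon,\ub_0)R^{-\kappa'}.
$$
As $\epsilon\to0$ the first two terms tend to $\tfrac{2c}{d-4}\big(dE(\ub_0)-2K_1\big)$, which is strictly negative: under \eqref{blowupcondition} it equals $\tfrac{2c}{d-4}h(K_1)<0$ since $K_1>\tau_\ast$, while if $E(\ub_0)<0$ it is $\le\tfrac{2cd}{d-4}E(\ub_0)<0$. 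Hence, fixing $\epsilon$ small and then $R$ large, there is $\delta_0>0$ with $V_R''(t)\le-\delta_0$ for all $t\in[0,T)$. Integrating twice, $V_R(t)\le V_R(0)+V_R'(0)\,t-\tfrac{\delta_0}{2}t^2$, whose right‑hand side is negative beyond a finite time; since $V_R\ge0$, the solution cannot exist that long, so $T<\infty$, and then $S_{[0,T)}(\ub)=\infty$ by the blow‑up alternative.

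The hard part will be the fourth‑order virial computation in the second step: carrying out the integrations by parts for the coupled system and verifying that, for the chosen $\phi_R$, the top‑order contributions assemble into the good leading term $cK(\ub)$ with only genuinely lower‑order remainders surviving (no uncontrolled $\int_{|x|\ge R}|\Delta u_k|^2$, and nothing above $\dot H^2$), while the nonlinearity collapses — via the homogeneity \ref{H5} — to a multiple of $P(\ub)$. A subsidiary difficulty, already felt in the low dimensions $d=5,6,7$, is that the pure $\dot H^2$ radial bound would feed back a super‑linear power of $K(\ub(t))$ into the potential error; this is precisely why the argument uses the $\dot H^1$‑radial (Strauss) estimate interpolated down to $L^2$, trading spatial decay against the conserved mass.
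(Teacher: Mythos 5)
Your trapping step (energy conservation, the sharp Sobolev inequality \eqref{optimal}, the Pohozaev relations \eqref{pohozaevident}, and a continuity argument to get $K(\ub(t))\geq K_1>K(\psib)$) is fine and is essentially what the paper does. The genuine gap is in the virial step: you take the nonnegative localized mass $V_R(t)=\sum_k\tfrac{\alpha_k^2}{\gamma_k}\int\phi_R|u_k|^2$ with $\phi_R(x)\sim|x|^2$ near the origin and claim $V_R''(t)\leq\tfrac{2c}{d-4}(dE-2K)+\mathcal E_R$. This is false for the biharmonic flow. Differentiating once along \eqref{SISTB} (the nonlinear piece does vanish, by Lemma \ref{MRequality}) gives
\[
V_R'(t)=-2\sum_{k}\alpha_k\,\IM\int\left(\Delta\phi_R\,\bar u_k\Delta u_k+2\,\nabla\phi_R\cdot\nabla\bar u_k\,\Delta u_k\right)dx,
\]
a quantity of the same order as $\int|\nabla u_k|^2$ that genuinely involves $\Delta u_k$; no amount of integration by parts reduces the second term to a first‑order momentum, so $V_R'$ is \emph{not} proportional to the paper's functional $\V_R=-2\sum_k\alpha_k\IM\int\nabla\phi_R\cdot\nabla u_k\,\bar u_k\,dx$. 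Differentiating once more produces terms scaling like $\int|\nabla^3 u_k|^2$, \emph{not} like $K(\ub)=\sum_k\gamma_k\int|\Delta u_k|^2$: for $\Delta^2$, time scales like length to the fourth, so $\partial_t^2$ applied to the $|x|^2$‑variance drops it by eight spatial orders, i.e.\ three derivatives on each factor of $u$. Your determination of $\mathcal A$ from $\mathcal A[\psib]=0$ is therefore circular: the second derivative of $V_R$ is simply not a linear combination of $E$ and $K$, so the Glassey concavity mechanism ("$V_R\geq 0$ and $V_R''\leq-\delta_0$ forces $T<\infty$") is unavailable. This is precisely why \cite{BL17}, which the paper follows, does not use it.

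What the paper actually does is compute a \emph{single} time derivative of the unsigned first‑order Morawetz quantity $\V_R$, obtaining the good expression in Lemmas \ref{IV}--\ref{virialBL}, and then closes the argument by an ODE comparison rather than by concavity. After the trapping step one has $\V_R'(t)\leq-cK(\ub(t))\leq-C<0$. Combined with the a priori bound $|\V_R(t)|\lesssim R\,K(\ub(t))^{1/4}$ (Cauchy--Schwarz, conservation of mass, and interpolation of $\|\nabla u_k\|_{L^2}$ between $L^2$ and $\dot H^2$), this yields $\V_R(t)\leq-C_0\int_{t_1}^t|\V_R(s)|^4\,ds$; writing $A(t)=\int_{t_1}^t|\V_R(s)|^4\,ds$, the inequality $A'(t)\geq C_0^4A(t)^4$ blows up at a finite time $t^\ast$, forcing $K(\ub(t))\to\infty$ and contradicting the assumption $T=\infty$. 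This Boulenger--Lenzmann ODE device is the step your proposal is missing; the concavity shortcut you use in its place does not apply to the fourth‑order equation.
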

As in NLS case, to prove this kind of result we will apply the convexity method, which consists in deriving a contradiction by working with some virial type equality. For our purpose, we will follow the ideas presented in \cite{BL17} and working with the function 
\[
\V_R(t)=-\sumk\alpha_k\IM\int\nabla\phi_R\cdot\nabla u_k\bu_kdx.
\]
and its derivative $\V'(t)$ in order to reach a contradiction with the length of the existence time interval $I$.

In the last part of the paper we will be concerned with the global existence and scattering of radial solutions in the homogeneous Sobolev space $\Dz$. Here we will prove that if we invert the second inequality in \eqref{blowupcondition}, the solution to \eqref{SISTB} is global and scatters. The results are the following.
\begin{theorem}(Spacetime bounds)\label{spacetimebounds} Let $5\leq d\leq 16$ and $\ub:I\times\R^d\rightarrow\C$ be a radial solution of \eqref{SISTB}. If 	\begin{equation}\label{scatcondition}
	E(\ub_0)<E(\psib)\quad\hbox{and}\quad K(\ub_0)< K(\psib)
	\end{equation}
	Then $S_I(\ub)<\infty.$
\end{theorem}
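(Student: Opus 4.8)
The plan is to carry out the Kenig--Merle concentration-compactness/rigidity program for the biharmonic propagators $U_k(t)=e^{it\frac{\gamma_k}{\alpha_k}\Delta^2}$ within the radial class. First I would note that, by the local theory built from \ref{H1}--\ref{H2} together with the Strichartz estimates of Proposition \ref{strichartz}, it suffices to bound the scattering size $S_I(\ub)$: finiteness of $S$ on the maximal interval forces $I=\R$ and scattering. The first ingredient is the \emph{variational trapping}. From Theorem \ref{ESGS} one extracts the sharp inequality $P(\phib)\le C_*\,K(\phib)^{\frac{d}{d-4}}$ attained at $\psib$, and from \ref{H5} and the scaling $\phib\mapsto\phib(\lambda\,\cdot)$ one gets the Pohozaev relations $P(\psib)=\frac{d-4}{2d}K(\psib)$ and $E(\psib)=\frac{2}{d}K(\psib)$. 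A continuity-in-$t$ argument then shows that under \eqref{scatcondition} the flow is energy-trapped: there is $\delta=\delta(\ub_0)>0$ with $\sup_{t\in I}K(\ub(t))\le(1-\delta)K(\psib)$ and $E(\ub(t))\simeq K(\ub(t))$, and the virial functional $G(\ub):=2K(\ub)-\frac{4d}{d-4}P(\ub)$, which vanishes on $\psib$ and governs the second time-derivative of $\sum_k\frac{\alpha_k^2}{\gamma_k}\int|x|^2|u_k|^2\,dx$, satisfies $G(\ub(t))\ge\delta'K(\ub(t))$ for all $t\in I$. This coercivity is the only place where $K(\ub_0)<K(\psib)$ is used beyond $E(\ub_0)<E(\psib)$.

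Next I would set up the minimal counterexample. From \ref{H1}--\ref{H2}, the Duhamel formula, and Proposition \ref{strichartz} one obtains small-data scattering (there is $\eta_0>0$ so that $K(\ub_0)\le\eta_0$ yields $\ub$ global with $S_\R(\ub)\lesssim K(\ub_0)$) and a long-time perturbation lemma (an approximate solution with small Duhamel error and finite scattering size is shadowed by a true solution). Set $E_c:=\sup\{E_0:\ \text{every radial solution with }E(\ub_0)<E_0\text{ and }K(\ub_0)<K(\psib)\text{ has }S(\ub)<\infty\}$; small-data theory gives $E_c>0$, and one argues by contradiction assuming $E_c<E(\psib)$. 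Taking radial $\ub_n$ with $E(\ub_{n,0})\downarrow E_c$ and $S(\ub_n)\to\infty$, I would apply a linear profile decomposition for the free radial biharmonic groups --- whose only non-compact symmetries in the radial class are the $\Dz$-scaling $\lambda^{\frac{d-4}{2}}\phi(\lambda\,\cdot)$ and time translation --- and replace each linear profile by the nonlinear profile solving \eqref{SISTB}. The perturbation lemma, asymptotic orthogonality of the energies, and minimality of $E_c$ then force a single surviving profile, producing a minimal non-scattering solution $\ub_c$ with $E(\ub_c)=E_c<E(\psib)$, $K(\ub_c(t))<K(\psib)$, $S(\ub_c)=\infty$, and precompact orbit modulo scaling: there is $N:I\to(0,\infty)$ with $\{N(t)^{-\frac{d-4}{2}}\ub_c(t,N(t)^{-1}\cdot)\}_{t\in I}$ precompact in $\Dz$.

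It then remains to rule out this critical element. Standard reductions on $N(t)$ (a Killip--Visan type dichotomy into a frequency-cascade/self-similar scenario and a soliton-like scenario), combined with radial symmetry, reduce matters to a globally defined, uniformly $\Dz$-precompact solution for which one may take $N(t)\equiv1$. Then, with $\phi_R$ the cutoff used in Theorem \ref{teoblowup} and $\V_R(t)=-\sum_k\alpha_k\IM\int\nabla\phi_R\cdot\nabla u_k\,\bu_k\,dx$, differentiating along the flow and inserting \eqref{SISTB} gives $\V_R'(t)=c\,G(\ub_c(t))+o_R(1)$ uniformly in $t$, the error being supported in $\{|x|\gtrsim R\}$ and hence small by precompactness. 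Since $\ub_c\not\equiv0$, precompactness also gives $\inf_{t}K(\ub_c(t))>0$, so by the trapping $\V_R'(t)\ge c'>0$ for all $t$ once $R$ is large and fixed; integrating on $[0,T]$ yields $\V_R(T)-\V_R(0)\ge c'T$, contradicting the uniform bound $|\V_R(t)|\lesssim R^{4}\sup_t\|\ub_c(t)\|_{\Dz}^2$ obtained from Hardy's inequality in $d\ge5$, once $T$ is taken large. Hence $E_c\ge E(\psib)$, so every radial solution satisfying \eqref{scatcondition} obeys $S_I(\ub)<\infty$.

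The main obstacles are the last two stages. Proving the linear profile decomposition requires an inverse Strichartz inequality with the correct fourth-order scaling normalization, and one must handle several coupled components $u_k$ carrying different dispersion constants $\gamma_k/\alpha_k$ together with the nonlinear profiles attached to them via \ref{H3}--\ref{H5}. In the rigidity stage one must exclude the frequency-cascade and self-similar enemies using only radial symmetry, and verify that the virial error is genuinely $o_R(1)$ uniformly in time. The conceptual heart of the matter is the coercivity $G(\ub(t))\ge\delta'K(\ub(t))$ of the first stage: transferring the sharp Sobolev-type inequality attached to $\psib$ into this quantitative statement along the flow is exactly what fixes the sign in the virial identity and closes the contradiction.
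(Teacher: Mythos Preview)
Your proposal is correct and follows essentially the same concentration-compactness/rigidity strategy as the paper. The only organizational differences are that the paper inducts on the critical \emph{kinetic} energy $K_c=\sup\{K_0: L(K_0)<\infty\}$ (with $L(K_0)$ the supremum of $S_I$ over solutions obeying $\sup_{t\in I}K(\ub(t))\le K_0$) rather than on your $E_c$, and it separates the enemies into three scenarios following \cite{MXZ} --- finite-time blow-up, soliton $N\equiv1$, and low-to-high cascade --- excluding the first by a mass-localization argument and the remaining two simultaneously via the virial inequality of Lemma \ref{lemma83}, rather than reducing everything to $N\equiv1$.
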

As a consequence, 
\begin{corollary}
	Let $5\leq d\leq 16$ and let $\ub$ be a maximal lifespan solution to \eqref{SISTB} on the time interval $I$. Assume that \eqref{scatcondition} holds. Then, $I=\R$ and $S_R(\ub)<\infty$. In particular, there exist asymptotic states $\ub^{\pm}\in\Dz$ such that
	\[\Vert \ub(t)-\mathbf{U}(t)\ub^{\pm}\Vert_{\Dz}\rightarrow 0,\quad\hbox{as }r\rightarrow\pm\infty,\]
	where $\mathbf{U}(t)=(U_1(t),....,U_l(t))$.
\end{corollary}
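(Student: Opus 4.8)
The plan is to deduce this corollary from Theorem~\ref{spacetimebounds} together with the local well-posedness theory and the Strichartz estimates of Proposition~\ref{strichartz}; the genuinely hard work has already been carried out in establishing the global spacetime bound, and what remains is the standard two-step passage (global existence, then scattering).

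First I would settle that $I=\R$. Let $\ub$ be the maximal-lifespan solution with lifespan $I$. Applying Theorem~\ref{spacetimebounds} to $\ub$ on $I$ gives $S_I(\ub)<\infty$. The local well-posedness theory supplies the usual blow-up alternative: if $\sup I<\infty$ then $\ub$ must blow up forward in time, i.e.\ $S_{[t_1,\sup I)}(\ub)=\infty$ for some $t_1\in I$ (otherwise one could iterate the local existence result to extend $\ub$ past $\sup I$, contradicting maximality), and symmetrically at $\inf I$. Since $S_I(\ub)<\infty$, neither can occur, so $I=\R$ and $S_\R(\ub)<\infty$. Feeding this back into the Duhamel formula \eqref{defsol} and combining Proposition~\ref{strichartz} with the nonlinear bound that follows from \ref{H1}--\ref{H2}, one upgrades finiteness of the scattering size to $\ub\in\Lb^q_t\Wb^{2,r}_x(\R\times\R^d)$ for every admissible pair $(q,r)$; in particular each $f_k(\ub)$ lies in the corresponding dual Strichartz space over $\R$ with finite norm.

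Next I would construct the asymptotic states. Set
\[
u_k^{\pm}:=u_{k0}-i\int_0^{\pm\infty}U_k(-s)\,f_k(\ub(s))\,ds,\qquad k=1,\ldots,l,
\]
the tail integrals converging in $\Dz$ because, after partitioning $\R$ into finitely many subintervals on which a suitable Strichartz norm of $\ub$ is small (possible since $S_\R(\ub)<\infty$) and running the standard continuity/bootstrap argument piece by piece, $s\mapsto U_k(-s)f_k(\ub(s))$ is seen to be absolutely integrable into $\Dz$ near $\pm\infty$. Then, using \eqref{defsol}, the group property, and the unitarity of $U_k(t)$ on $\Dz$,
\[
\|u_k(t)-U_k(t)u_k^{+}\|_{\Dz}=\|U_k(-t)u_k(t)-u_k^{+}\|_{\Dz}=\Big\|\int_t^{\infty}U_k(-s)f_k(\ub(s))\,ds\Big\|_{\Dz},
\]
and the right-hand side tends to $0$ as $t\to+\infty$ by the Strichartz bound on the tail and dominated convergence; the case $t\to-\infty$ is identical with $u_k^{-}$. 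Summing over $k=1,\ldots,l$ yields $\|\ub(t)-\mathbf{U}(t)\ub^{\pm}\|_{\Dz}\to0$.

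The main obstacle is really Theorem~\ref{spacetimebounds} itself; for the present corollary the only technical ingredient is the nonlinear estimate placing $f_k(\ub)$ into the dual Strichartz space with norm controlled by a power of $S_\R(\ub)$ and $\sup_t\|\Delta\ub(t)\|_{L^2}$. This rests on the Lipschitz-type bound \ref{H2}, a fractional chain/product rule, and Sobolev embedding, and it is precisely here that the range $5\le d\le 16$ (Remark~\ref{dimension}) is used, to guarantee that the relevant admissible exponents exist. Once this estimate is in hand, the partition of $\R$ into finitely many intervals of small scattering size and the resulting tail bound are completely routine.
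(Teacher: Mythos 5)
Your proposal is correct and follows the standard two-step passage (global existence via Theorem \ref{spacetimebounds} together with Corollary \ref{standblowup}, then construction of asymptotic states via the Duhamel formula and Strichartz/Remark \ref{estimatespaces}), which is precisely the argument the paper implicitly relies on by stating this as a corollary without proof. The only cosmetic point is that the sign convention in your $u_k^{\pm}$ differs by a factor of $i$ from the algebra that follows, but since you immediately pass to norms this is immaterial.
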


To this end, we will follow the ideas presented in \cite{MXZ}, which is a variant of the concentration-compactness and rigidity method in \cite{killip}, with the fourth order Schrödinger equation settings. See also \cite{KM} and \cite{killip2}. The main difference is that we are working on the setting of coupled nonlinear system with general power-type nonlinearities. 

The whole idea is to use some properties of the nonlinearities $f_k$ to guarantee that the method works for our general system. We start assuming by contradiction that Theorem \ref{spacetimebounds} fails. This allows us to reduce the solution of \eqref{SISTB} to the case of almost periodic solutions. Therefore,  we will see that this kind of solutions has three particular properties, which are called ``the enemies'' in the literature. The next step is to prove that none solutions satisfies this properties. To this end, we prove some adapted versions of the coercivity lemmas in \cite{KM} involving the kinetic and potential energies and establish the energy trapping result for our setting, which are important for precluding the three-enemies scenario. Next, we prove a virial-type inequality, which is necessary to ensure that almost periodic solutions cannot exist, thereby deriving a contradiction. 

The paper is organized as follows. Section 2 is devoted to introduce some notations and preliminary results. In Section 3, we establish consequences of our assumptions \ref{H1}-\ref{H7}. In Section 4, we work on local theory for \eqref{SISTB}. The existence of ground state solutions  is proved in Section 5. In Section 6, we prove virial identities results and the blow-up in finite time. Finally, section 7 is devoted to prove global well-posedness and scattering.

	\section{Notation and preliminary results}\label{secnot}

Throughout the work we will use the standard notation in PDEs. Indeed, $C$ will represent a generic constant which may vary from inequality to inequality. If $a$ and $b$ are positive constants, we denote $a\lesssim b$ whenever  $a\leq Cb$ for some constant $C>0$, similar for the case $a\gtrsim b$. We write $X\pm$ for any quantity of the form $X\pm \epsilon$ for any small $\epsilon>0$. Given a subset $A$, we denote by $\mathbf{A}$ the product $A\times \cdots\times A$ ($l$-times). In particular, if $A$ is a Banach space, then $\mathbf{A}$ also is with the usual norm given by the sum. For a complex number $z\in \mathbb{C}$, Re$\,z$ and Im$\,z$ represents its real and imaginary parts.  Also, $\bar{z}$ denotes the complex conjugate of $z$.We denote by $\mathbf{z}\in\C^l$ the vector $(z_1,...,z_l)$, where $z_m=x_m+iy_m$ with $x_m,z_m$ the imaginary parts of $z_m$. We set $\big\bracevert\! \mathbf{z}\!\big\bracevert$ for the vector $(|z_1|,\ldots,|z_l|)$. This is not to be confused with $|\mathbf{z}|=\sqrt{|z_1|^2+\cdots+|z_l|^2}$ which denotes the usual norm of the vector $\mathbf{z}\in\mathbb{C}^l$. As usual,  the operators $\partial/\partial z_m$ and $\partial/\partial \widebar{z}_m$ are defined by
$$
\frac{\partial}{\partial z_m}=\frac{1}{2}\left(\frac{\partial}{\partial x_m}-i\frac{\partial}{\partial} y_m\right),\quad \frac{\partial}{\partial \widebar{z}_m}=\frac{1}{2}\left(\frac{\partial}{\partial x_m}+i\frac{\partial}{\partial} y_m\right)
$$

We denote the stantard Sobolev, the homogeneous Sobolev and the Lebesgue spaces by  $H^{s,p}=H^{s,p}(\mathbb{R}^d)$, $\dot{H}^{s,p}=\dot{H}^{s,p}(\R^d)$ and $L^p=L^p(\mathbb{R}^d)$, respectively, with its usual norms. We denote $H^s=H^{s,2}$ and $\dot{H}^s=\dot{H}^{s,2}$. Given a time interval $I$, the mixed Lebesgue space $L_t^pL_x^q(I\times\mathbb{R}^d)$ is  denoted by $L_t^pL_x^q$ and will be endowed with the norm
$$
\Vert f \Vert_{L_t^pL_x^q}=\left(\int_I\left(\int_{\mathbb{R}^d}|f(t,x)|^qdx\right)^{p/q}dt\right)^{1/p},
$$
with the obvious modification if either $p=\infty$ or $q=\infty$. Also, we will denote by $p'$ the exponent conjugate of $p$, that is, $1=1/p+1/p'$.

The Fourier transform of a function defined on $\R^d$ will be usually denoted by $\hat{f}$.
For $s\in\mathbb{R}$, we define the fractional differentiation/integral operator
$
\widehat{|\nabla|^sf}(\xi):=|\xi|^s\hat{f}(\xi),
$
which defines the homogeneous Sobolev norm:
$$
\Vert f\Vert_{\dot{H}_x^s(\mathbb{R}^d)}:=\Vert |\nabla|^sf\Vert_{L_x^2(\mathbb{R}^d)}.
$$

For the measure part of the work we set the following notation. Let $X$ be a locally compact Hausdorff space. We denote by $\mathcal{C}_b(X)$ the space of all bounded continuous functions on $X$, and by $\mathcal{C}_c(X)$ the space of all continuous functions with compact support on $X$. Moreover, $\mathcal{M}_+(X)$ denotes the Banach space of all non-negative measures on $X$, $\mathcal{M}_+^b(X)$ the space of all finite (or bounded) measures and $\mathcal{M}_+^1$ the space of all probability measures. Given two measures $\nu$ and $\mu$, we write $\nu \ll \mu$ to indicate that $\nu$ is absolutely continuous with respect to $\mu$. For any $\mu \in \mathcal{M}_+^b(X)$, the quantity $|\mu| := \mu(X)$ is called the mass of $\mu$. Let us introduce some notions of convergence in the context of measures.
\begin{definition}
	$\bf{}$
	\begin{itemize}
		\item[(i)] A sequence $(\mu_m)\subset\mathcal{M}_+$ is said to converge vaguely to $\mu$ in $\mathcal{M}_+(X)$, denoted by $\mu_m\overset{\ast}{\rightharpoonup}\mu$, if $\int_Xfd\mu_m\rightarrow\int_Xfd\mu$ for all $f\in\mathcal{C}_c(X)$.
		
		\item[(ii)] A sequence $(\mu_m)\subset\mathcal{M}_+^b(X)$ is said to converge weakly to $\mu$ in $\mathcal{M}_+^b(X)$, denoted by $\mu_m\rightharpoonup\mu$, if $\int_X fd\mu_m\rightarrow\int_X fd\mu$, for all $f\in \mathcal{C}_b(X)$.
		
		\item[(iii)] A sequence $(\mu_m)\subset\mathcal{M}_+^b$ is said to be uniformly tight if, for every $\epsilon>0$, there exists a compact subset $K_\epsilon\subset X$ such that $\mu_m(X\backslash K_\epsilon)\leq \epsilon$ for all $m$. We also say that a set $\mathcal{H}\subset \mathcal{M}_+(X)$ is vaguely bounded if $\sup_{\mu\in\mathcal{H}}\left|\int_Xf d\mu\right|<\infty$ for all $f\in\mathcal{C}_c(X)$.
	\end{itemize}
\end{definition}

If no confusion is caused, we denote $\int_{\mathbb{R}^d}f(x)dx$ simply by $\int f$.

Next, let us recall the Sobolev inequalities.
\begin{lemma}[Sobolev Embedding]\label{embedding} Let $s\in(0,+\infty)$ and $1\leq p<\infty$. 
	\begin{itemize}
		\item[(i)] If $s\in (0,\frac{d}{p})$ then $H^{s,p}(\R^d)$ is continuously embedded in $L^r(\R^d)$ where $s=\frac{d}{p}-\frac{d}{r}$. Also, it holds 
		\begin{equation}\label{sobolev}
			\Vert f\Vert_{L^r}\lesssim \Vert |\nabla|^s f\Vert_{L^p}.
		\end{equation}
		\item[(ii)] If $s=\frac{d}{2}$ then $H^s(\R^d)\subset L^r(\R^d)$ for all $r\in[2,\infty)$. In particular,
		\begin{equation}\label{sobolev2}
			\Vert f\Vert_{L^r}\lesssim \Vert f\Vert_{H^s}.
		\end{equation}
		\item[(iii)] If $s>\frac{d}{2}$ then $H^s(\R^d)\subset L^\infty(\R^d)$.
	\end{itemize}
	
\end{lemma}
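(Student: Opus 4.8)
The statement is the classical Sobolev embedding theorem, so the plan is to prove the three items by the standard frequency-space and Riesz-potential arguments, invoking only the Hardy--Littlewood--Sobolev inequality and Bernstein's inequality as external inputs. For item (i) I would represent $f$ through the Riesz potential: setting $g=|\nabla|^s f$, one has $f=|\nabla|^{-s}g=I_s g$, where for $0<s<d$ the operator $I_s$ is convolution with the kernel $c_{d,s}|x|^{s-d}$. The Sobolev inequality \eqref{sobolev} then reduces to the boundedness $I_s:L^p\to L^r$ with $\tfrac1r=\tfrac1p-\tfrac sd$, which is exactly the Hardy--Littlewood--Sobolev inequality for $1<p<\infty$ (the endpoint $p=1$ being the classical Gagliardo--Nirenberg--Sobolev estimate, which is all that is ever used here is $p=2$); the hypothesis $s<d/p$ guarantees $p<r<\infty$, i.e.\ that we are in the admissible range. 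The continuous embedding $H^{s,p}\subset L^r$ then follows by splitting $f=P_{\le1}f+P_{>1}f$ into low and high frequencies: on low frequencies Bernstein's inequality gives $\|P_{\le1}f\|_{L^r}\lesssim\|P_{\le1}f\|_{L^p}\lesssim\|f\|_{L^p}$ (using $r\ge p$), while on high frequencies the multiplier $|\nabla|^{-s}$ is bounded there, so \eqref{sobolev} applies to $P_{>1}f$.

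For item (ii), with $s=d/2$, one cannot reach $L^\infty$, but a Littlewood--Paley decomposition $f=P_{\le1}f+\sum_{N\ge2}P_N f$ (dyadic $N$) handles every $r\in[2,\infty)$. For the low-frequency piece, Bernstein gives $\|P_{\le1}f\|_{L^r}\lesssim\|P_{\le1}f\|_{L^2}\lesssim\|f\|_{L^2}\le\|f\|_{H^{d/2}}$. For the high frequencies, Bernstein yields $\|P_N f\|_{L^r}\lesssim N^{d/2-d/r}\|P_N f\|_{L^2}\approx N^{-d/r}\|P_N f\|_{\dot H^{d/2}}$, and then, since $r<\infty$ makes $\sum_{N\ge2}N^{-2d/r}$ a convergent dyadic series, Cauchy--Schwarz gives
\[
\sum_{N\ge2}\|P_N f\|_{L^r}\ \lesssim\ \Big(\sum_{N\ge2}N^{-2d/r}\Big)^{1/2}\Big(\sum_{N}\|P_N f\|_{\dot H^{d/2}}^2\Big)^{1/2}\ \lesssim\ \|f\|_{\dot H^{d/2}}\ \le\ \|f\|_{H^{d/2}},
\]
using the Littlewood--Paley characterization of $\dot H^{d/2}$; summing the two contributions by Minkowski's inequality in $L^r$ gives \eqref{sobolev2}.

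For item (iii), with $s>d/2$, I would argue directly on the Fourier side: by Cauchy--Schwarz,
\[
\|\widehat f\|_{L^1}\ \le\ \big\|\langle\xi\rangle^{-s}\big\|_{L^2}\,\big\|\langle\xi\rangle^{s}\widehat f\big\|_{L^2}\ =\ C_{d,s}\,\|f\|_{H^s},
\]
where $C_{d,s}^2=\int_{\R^d}\langle\xi\rangle^{-2s}\,d\xi<\infty$ precisely because $2s>d$. Fourier inversion then gives $\|f\|_{L^\infty}\lesssim\|\widehat f\|_{L^1}\lesssim\|f\|_{H^s}$ (and $f$ is in fact continuous, by Riemann--Lebesgue).

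The only genuinely non-elementary input is the Hardy--Littlewood--Sobolev inequality used in (i) (equivalently, the sharp mapping properties of the Riesz potential); everything else is bookkeeping with Bernstein's inequality and the Littlewood--Paley square-function estimate. I would therefore either cite HLS and Bernstein from a standard reference (e.g.\ Stein, Grafakos, or Bergh--L\"ofstr\"om) and assemble the short arguments above, or, if a self-contained treatment is preferred, include the standard rearrangement/interpolation proof of HLS --- this being the one step that is not a two-line computation.
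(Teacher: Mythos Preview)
Your argument is correct and follows one of the standard routes to the Sobolev embeddings: Riesz potentials plus Hardy--Littlewood--Sobolev for (i), a Littlewood--Paley decomposition with Bernstein and Cauchy--Schwarz for the borderline case (ii), and the Hausdorff--Young/Fourier-inversion trick for (iii). One small wrinkle worth tightening is the endpoint $p=1$ in (i): HLS fails there, and for non-integer $s$ the homogeneous estimate $\|f\|_{L^r}\lesssim\||\nabla|^s f\|_{L^1}$ is genuinely false, so strictly speaking your sketch covers $1<p<\infty$ (which, as you note, is all that the paper ever uses, namely $p=2$).

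As for comparison with the paper: the paper does not actually prove this lemma. Its entire ``proof'' is a one-line citation to Proposition~4.18 of Demengel--Demengel, \emph{Functional Spaces for the Theory of Elliptic Partial Differential Equations}. So you have supplied considerably more than the paper does; your sketch is a legitimate self-contained justification, whereas the paper treats the result as a black box from the literature.
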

\begin{proof}
See \cite[Proposition 4.18]{demengel}.
\end{proof}

\begin{lemma}[Sobolev multiplication law]\label{sobolevmult}. Let $d\geq 1$ and assume that $s,s_1,s_2$ are real number satisfying either

		\begin{itemize}
			\item[(i)] $s_1+s_2\geq 0$, $s_1,s_2\geq s$ and $s_1+s_2>s+d/2$; or
			\item[(ii)] $s_1+s_2> 0$, $s_1,s_2> s$ and $s_1+s_2\geq s+d/2.$
		\end{itemize}
	Then, there is a continuous multiplication map $H^{s_1}(\R^d)\times H^{s_2}(\R^d)\rightarrow H^s(\R^d)$ given by $(u,v)\mapsto uv$. Moreover, holds
	$$
	\Vert uv\Vert_{H^s(\R^d)}\leq C\Vert u\Vert_{H^{s_1}(\R^d)}\Vert v\Vert_{H^{s_2}(\R^d)}.
	$$
	\end{lemma}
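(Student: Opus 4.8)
This is a standard Sobolev multiplication (fractional Leibniz) estimate; the plan is to run a Littlewood--Paley/paraproduct argument. Since $\|uv\|_{H^{s}}\sim\|\langle\xi\rangle^{s}\widehat{uv}\|_{L^{2}_{\xi}}$ with $\widehat{uv}=\hat u\ast\hat v$ (up to a constant), the statement reduces to a weighted convolution estimate for the Bessel--potential weights $\langle\cdot\rangle^{-s_{1}},\langle\cdot\rangle^{-s_{2}},\langle\cdot\rangle^{s}$. Writing $u=\sum_{N}P_{N}u$ and $v=\sum_{M}P_{M}v$ over dyadic frequencies (with $P_{1}$ collecting all frequencies $\lesssim1$) and invoking the Bony decomposition, I would split $uv$ into three interaction regimes: the low$\cdot$high part $\sum_{N}(P_{<N}v)(P_{N}u)$, the symmetric high$\cdot$low part $\sum_{M}(P_{<M}u)(P_{M}v)$, and the resonant high$\cdot$high part $\sum_{N\sim M}(P_{N}u)(P_{M}v)$.

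In the low$\cdot$high part the output frequency is comparable to $N$, so by almost orthogonality $\|\sum_{N}(P_{<N}v)(P_{N}u)\|_{H^{s}}^{2}\lesssim\sum_{N}N^{2s}\|(P_{<N}v)(P_{N}u)\|_{L^{2}}^{2}$. One estimates $\|(P_{<N}v)(P_{N}u)\|_{L^{2}}\le\|P_{<N}v\|_{L^{q}}\|P_{N}u\|_{L^{p}}$ by H\"older, bounds $\|P_{N}u\|_{L^{p}}\lesssim N^{d(1/2-1/p)}\|P_{N}u\|_{L^{2}}$ and $\|P_{<N}v\|_{L^{q}}\lesssim\|v\|_{H^{s_{2}}}$ by Bernstein and Lemma~\ref{embedding} (the latter requiring $d(1/2-1/q)<s_{2}$, so that the dyadic sum over the low frequencies of $v$ is geometric), and then sums the resulting series in $N$, which converges as soon as $s+d(1/2-1/p)\le s_{1}$. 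Under the constraints $1/p+1/q=1/2$ and $p,q\ge2$ these two exponent conditions are simultaneously solvable precisely when $s_{1}\ge s$ and $s_{1}+s_{2}>s+d/2$; the high$\cdot$low term is handled symmetrically using $s_{2}\ge s$. (When $s_{1}$ or $s_{2}$ is $\ge d/2$ one replaces the Lebesgue exponents by $L^{\infty}$-type bounds, which only makes matters easier.)

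The delicate part is the resonant term, where the output frequency is merely $\lesssim N$, so one must decompose the output dyadically as well, at frequency $L\lesssim N$, and sum. Estimating $\|(P_{N}u)(P_{M}v)\|_{L^{r}}$ for an appropriate $r<2$ (Bernstein plus H\"older), applying Bernstein once more to pass to $L^{2}$ at frequency $L$, and then summing first in $N$ and then in $L$ yields the bound: the $N$-sum converges because one can choose $r<2$ with $d/r'<s_{1}+s_{2}$ exactly when $s_{1}+s_{2}>0$, and the $L$-sum converges because its exponent equals $d-2(s_{1}+s_{2}-s)$. In case (i) this exponent is strictly negative, while in the subcase $s_{1}+s_{2}=0$ one has $s<-d/2$ and uses the crude choice $r=1$ directly; in the borderline regime of case (ii), where $s_{1}+s_{2}=s+d/2$ and the $L$-exponent vanishes, the $L$-summation must be performed via a Schur-type estimate exploiting the strict inequalities $s_{1},s_{2}>s$ (equivalently, this borderline case is closed directly by the Kato--Ponce inequality together with the Sobolev embedding of Lemma~\ref{embedding}). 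Adding the three contributions gives $\|uv\|_{H^{s}}\lesssim\|u\|_{H^{s_{1}}}\|v\|_{H^{s_{2}}}$. I expect the main obstacle to be precisely this resonant piece and the borderline case (ii); away from it everything is a routine combination of Bernstein, H\"older and Sobolev inequalities. Alternatively, the statement may simply be quoted from standard references on Sobolev multiplication.
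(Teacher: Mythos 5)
The paper does not prove this lemma at all; it simply cites it as \cite[Corollary 316]{tao4}. Your closing remark — that the statement ``may simply be quoted from standard references on Sobolev multiplication'' — is therefore precisely what the authors do, and in that sense you and the paper agree on the appropriate level of detail.

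As for the sketch itself, the paraproduct strategy (Littlewood--Paley decomposition, split into low$\cdot$high, high$\cdot$low, and resonant pieces, estimate each with Bernstein, H\"older and the Sobolev embedding of Lemma~\ref{embedding}, then sum using almost-orthogonality) is the correct and standard route, and you correctly flag the resonant piece and the borderline $s_1+s_2=s+d/2$ of case~(ii) as the only places where anything subtle happens. Two imprecisions worth noting. First, in the low$\cdot$high estimate you bound $\|P_{<N}v\|_{L^q}\lesssim\|v\|_{H^{s_2}}$ and say this requires $d(1/2-1/q)<s_2$ with a geometric sum over low frequencies of $v$; in fact you can apply the embedding $H^{s_2}\hookrightarrow L^{q}$ with $q=2d/(d-2s_2)$ directly (no sum over $M$ is needed), so the correct summability condition on the $N$-sum is the non-strict $s_1+s_2\geq s+d/2$ together with the almost-orthogonal $\ell^2$ structure — this is what makes the borderline of case~(ii) close for the paraproduct pieces, not a Schur argument. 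Second, the sketch tacitly assumes $0\leq s_2<d/2$ (resp.\ $0\leq s_1<d/2$) when invoking $H^{s_2}\hookrightarrow L^q$; the hypotheses permit one of $s_1,s_2$ to be negative (e.g.\ $s_1=5$, $s_2=-1$, $s=-2$, $d=3$), in which case the low-frequency factor is not bounded uniformly and one must carry a power $N^{d(1/2-1/q)-s_2}$ out of the low block and absorb it into the $N^{s}$ weight; your $L^\infty$ remark handles only the opposite extreme $s_i\geq d/2$. These are fixable bookkeeping issues, not conceptual errors, and do not affect the overall viability of the argument.
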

	\begin{proof}
		See \cite[Corollary 316]{tao4}.
	\end{proof}

The first result we present here about the biharmonic Schrödinger equation is the linear estimates, that are directly consequences of the dispersion estimates presented in \cite{AKS}. The result reads as follows

\begin{lemma}[Linear estimates]\label{linear estimates} Let $2\leq p\leq\infty$ and $g\in L^{p'}$. For all time $t\neq 0$,  we have
	$$
	\Vert e^{it\Delta^2}g\Vert_{L^p}\lesssim |t|^{-\frac{d}{4}\left(1-\frac{2}{p}\right)}\Vert g\Vert_{L^{p'}}.
	$$
	\end{lemma}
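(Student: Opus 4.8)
The plan is to reduce the statement to the two endpoint cases $p=2$ and $p=\infty$ and then interpolate. For $p=2$ there is nothing to prove: by Plancherel the Fourier multiplier $e^{it\Delta^2}$ has symbol $e^{it|\xi|^4}$, of modulus one, so $\|e^{it\Delta^2}g\|_{L^2}=\|g\|_{L^2}$, which matches the asserted bound since the exponent $-\tfrac{d}{4}\bigl(1-\tfrac{2}{2}\bigr)$ vanishes.

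The heart of the matter is the $L^1\to L^\infty$ dispersive bound
$$\|e^{it\Delta^2}g\|_{L^\infty}\lesssim |t|^{-d/4}\|g\|_{L^1},\qquad t\neq 0.$$
I would obtain this by writing $e^{it\Delta^2}g=K_t*g$ with oscillatory kernel $K_t(x)=c\int_{\R^d}e^{i(x\cdot\xi+t|\xi|^4)}\,d\xi$, so that by Young's inequality it suffices to prove the pointwise bound $\|K_t\|_{L^\infty}\lesssim |t|^{-d/4}$. The change of variables $\xi\mapsto |t|^{-1/4}\xi$ turns this into the $t$-independent statement $\|K_{\pm1}\|_{L^\infty}<\infty$, which is precisely the stationary-phase estimate for the biharmonic phase function established in \cite{AKS}: one splits the frequency integral into the region $|\xi|\lesssim1$, handled trivially by the volume bound, and the region $|\xi|\gtrsim1$, where the phase $x\cdot\xi+t|\xi|^4$ has non-degenerate Hessian and a van der Corput / stationary-phase argument (after a Littlewood--Paley decomposition) applies. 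Since this estimate is already available from \cite{AKS}, I would simply invoke it.

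Finally, I would interpolate by the Riesz--Thorin theorem between $e^{it\Delta^2}\colon L^2\to L^2$, of norm $1$, and $e^{it\Delta^2}\colon L^1\to L^\infty$, of norm $\lesssim |t|^{-d/4}$. Taking $\theta=1-\tfrac{2}{p}\in[0,1]$ yields a bounded operator $L^{p'}\to L^p$ of norm $\lesssim\bigl(|t|^{-d/4}\bigr)^{\theta}=|t|^{-\frac{d}{4}\left(1-\frac{2}{p}\right)}$, which is exactly the claim. The only genuinely non-routine input is the kernel bound $\|K_{\pm1}\|_{L^\infty}<\infty$; everything else (Plancherel, Young, Riesz--Thorin) is standard, so the anticipated main obstacle has effectively been outsourced to the dispersive estimates of \cite{AKS}.
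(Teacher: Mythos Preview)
Your proposal is correct and follows exactly the approach the paper indicates: the paper does not give a proof but simply states that the lemma is a direct consequence of the dispersion estimates in \cite{AKS}, which is precisely what you do by invoking the $L^1\to L^\infty$ kernel bound from \cite{AKS} and interpolating with the unitary $L^2$ bound via Riesz--Thorin.
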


The above result is a key ingredient to derive Strichartz type estimates associated to the biharmonic Schrödinger propagator presented in \cite{Pausader}. Before states the result we need to set some concepts. We say that the pair $(q,r)$ is biharmonic Schrödinger admissible (B-admissible for short) if satisfies
\begin{equation}\label{badmim}
	\frac{4}{q}=\frac{d}{2}-\frac{d}{r},\quad 2\leq q,r\leq \infty,\,(q,r,d)\neq (2,\infty,4).
\end{equation}
and a pair $(q,r)$ is said to be Schrödinger admissible (S-admissible for short) if satisfies
\begin{equation}\label{sadmim}
\frac{2}{q}=\frac{d}{2}-\frac{d}{r},\quad 2\leq q,r\leq \infty,\,(q,r,d)\neq (2,\infty,2).
\end{equation}
The result states the following.
\begin{proposition}[Strichartz-type estimates]\label{strichartz}
	Let $I=[0,T]$ be an interval, $h\in (L^1_{\rm loc})_tH_x^{-4}(I\times\R^d)$ and $u\in C_tH_x^{-4}(I\times\R^d)$ be a solution of
$$
u(t)=e^{it\Delta^2}u_0-\int_0^t  e^{i(t-s)\Delta^2}h(s)ds,
$$
where $u_0\in L^2$. 
\begin{itemize}
	\item[(i)] For any B-admissible pairs $(q,r)$ and $(a,b)$ 
	\begin{equation}\label{S310}
		\Vert u\Vert_{L_t^qL_x^r}\lesssim \Vert u_0\Vert_{L^2}+\Vert h\Vert_{L_t^{a'}L_x^{b'}}.
	\end{equation}
	\item[(ii)] For any S-admissible pairs $(q,r)$ and $(a,b)$ and $s\geq 0$,
	\begin{equation}\label{S311}
		\Vert |\nabla|^s u\Vert_{L_t^qL_x^r}\lesssim \Vert |\nabla|^{s-\frac{2}{q}}u_0+\Vert |\nabla|^{s-\frac{2}{q}-\frac{2}{a}}h\Vert_{L_t^{a'}L_x^{b'}}.
	\end{equation}
	\item[(iii)] For any B-admissible pair $(q,r)$ and $u_0\in\dot{H}^2(\R^d)$,
	\begin{equation}\label{S319}
		\Vert \Delta u\Vert_{L_t^qL_x^r}\lesssim \Vert \Delta u_0\Vert_{L^2}+\Vert \nabla h\Vert_{L_t^2L_x^{\frac{2d}{d+2}}}
	\end{equation}
	\end{itemize}

\end{proposition}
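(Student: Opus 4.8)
The plan is to deduce all three parts from the single-time dispersive bound of Lemma \ref{linear estimates} together with the conservation law $\Vert e^{it\Delta^2}g\Vert_{L^2}=\Vert g\Vert_{L^2}$, feeding these into the abstract Keel--Tao machinery. Write $U(t)=e^{it\Delta^2}$. Taking $p=\infty$ in Lemma \ref{linear estimates} gives $\Vert U(t)U(s)^{\ast}g\Vert_{L^\infty}=\Vert U(t-s)g\Vert_{L^\infty}\lesssim|t-s|^{-d/4}\Vert g\Vert_{L^1}$, so $U$ satisfies the hypotheses of the Keel--Tao theorem with decay exponent $\sigma=d/4$. For part (i) I would simply observe that the sharp $\sigma$-admissible pairs of that framework, namely those with $\tfrac1q=\tfrac d4(\tfrac12-\tfrac1r)$, i.e. $\tfrac4q=\tfrac d2-\tfrac dr$, together with the single forbidden case $(q,r,\sigma)=(2,\infty,1)$ — which here is exactly $(q,r,d)=(2,\infty,4)$ — are precisely the B-admissible pairs of \eqref{badmim}. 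The Keel--Tao theorem then yields both the homogeneous estimate $\Vert U(t)u_0\Vert_{L_t^qL_x^r}\lesssim\Vert u_0\Vert_{L^2}$ and the retarded estimate $\Vert\int_0^tU(t-s)h(s)ds\Vert_{L_t^qL_x^r}\lesssim\Vert h\Vert_{L_t^{a'}L_x^{b'}}$ for all B-admissible $(q,r),(a,b)$, including the double endpoint; inserting these in the Duhamel formula gives \eqref{S310}. Away from the endpoint one may instead obtain the retarded bound from the homogeneous one by duality and the Christ--Kiselev lemma, whose hypothesis $a'<q$ holds except at the double endpoint.

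For part (ii) I would exploit the extra smoothing of the quartic dispersion relation. Fix a Littlewood--Paley projection $P_N$ onto $|\xi|\sim N$; a stationary-phase analysis of the kernel $\int\psi(\xi/N)e^{i(x\cdot\xi+t|\xi|^4)}d\xi$, using that the Hessian of $|\xi|^4$ is nondegenerate on $|\xi|\sim N$, gives the frequency-localized dispersive bound $\Vert P_NU(t)\Vert_{L^1\to L^\infty}\lesssim N^d(1+|t|N^4)^{-d/2}$, i.e. a Schrödinger-type decay $|t|^{-d/2}$ with an $N$-gain on each dyadic shell. Interpolating with $L^2$-conservation and running the $TT^{\ast}$/Hardy--Littlewood--Sobolev argument at frequency $N$ — where the relevant time-decay exponent is $\tfrac d2(1-\tfrac2r)$, whose reciprocal equals $\tfrac q2$ exactly when $(q,r)$ is S-admissible in the sense of \eqref{sadmim} — produces $\Vert P_NU(t)u_0\Vert_{L_t^qL_x^r}\lesssim N^{-2/q}\Vert P_Nu_0\Vert_{L^2}$, and, composing the homogeneous estimate with its adjoint on the input side, $\Vert\int_0^tP_NU(t-s)h(s)ds\Vert_{L_t^qL_x^r}\lesssim N^{-2/q-2/a}\Vert P_Nh\Vert_{L_t^{a'}L_x^{b'}}$ for S-admissible $(q,r),(a,b)$. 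Since $|\nabla|^s\sim N^s$ on the range of $P_N$, these become $\Vert|\nabla|^sP_NU(t)u_0\Vert_{L_t^qL_x^r}\lesssim\Vert P_N|\nabla|^{s-2/q}u_0\Vert_{L^2}$ and its retarded analogue; squaring in $N$ and summing via Littlewood--Paley (legitimate since $r,b<\infty$ for S-admissible pairs when $d\ge5$) gives \eqref{S311}.

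For part (iii) the homogeneous part is immediate: $\Delta$ commutes with $U(t)$ and $\Delta u_0\in L^2$, so $\Vert\Delta U(t)u_0\Vert_{L_t^qL_x^r}=\Vert U(t)\Delta u_0\Vert_{L_t^qL_x^r}\lesssim\Vert\Delta u_0\Vert_{L^2}$ by \eqref{S310}. For the Duhamel term I would write $\Delta\int_0^tU(t-s)h(s)ds=-|\nabla|\int_0^tU(t-s)(|\nabla|h)(s)ds$, set $G:=|\nabla|h$, use Christ--Kiselev to replace $\int_0^t$ by $\int_{\R}$, and then, commuting $|\nabla|$ past $U$, reduce to bounding $\Vert U(t)(|\nabla|f)\Vert_{L_t^qL_x^r}$ with $f=\int_{\R}U(-s)G(s)ds$, which by \eqref{S310} is $\lesssim\Vert|\nabla|f\Vert_{L^2}$. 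Finally, by duality, $\Vert|\nabla|f\Vert_{L^2}=\sup_{\Vert\phi\Vert_{L^2}\le1}\bigl|\int\langle G(s),|\nabla|U(s)\phi\rangle ds\bigr|\le\Vert G\Vert_{L_t^2L_x^{2d/(d+2)}}\sup_{\Vert\phi\Vert_{L^2}\le1}\Vert|\nabla|U(s)\phi\Vert_{L_t^2L_x^{2d/(d-2)}}$, and the last quantity is finite by part (ii) applied with $s=1$ at the S-admissible endpoint $(2,\tfrac{2d}{d-2})$ (for which $2/q=1$). Since $\Vert G\Vert_{L_t^2L_x^{2d/(d+2)}}=\Vert\nabla h\Vert_{L_t^2L_x^{2d/(d+2)}}$, this is \eqref{S319}.

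The main obstacle I expect is the endpoint analysis: establishing the homogeneous and, above all, the retarded estimates at the B-admissible endpoint $(2,\tfrac{2d}{d-4})$ and at the S-admissible endpoint $(2,\tfrac{2d}{d-2})$, where the Christ--Kiselev lemma is unavailable for the double-endpoint retarded bound and one must invoke the full bilinear interpolation argument of Keel--Tao; this is also where the exclusion $(q,r,d)\neq(2,\infty,4)$ and the lower bound $d\ge5$ are genuinely used. Everything else — the frequency-localized stationary-phase estimate, the scaling bookkeeping of the $N$-gains, and the Littlewood--Paley summation — is routine and follows the treatment in \cite{Pausader}.
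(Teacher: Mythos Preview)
Your arguments for parts (i) and (ii) are correct and coincide with the approach in \cite{Pausader}, which is what the paper cites for those parts.

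For part (iii) your argument is also correct, but it is more roundabout than the paper's. You split off one power of $|\nabla|$, invoke Christ--Kiselev to untruncate the Duhamel integral, and then close by duality against the homogeneous S-admissible endpoint estimate from (ii); this forces you to worry about the double endpoint $q=a=2$ where Christ--Kiselev fails. The paper instead observes that if $(q,r)$ is B-admissible and $\tilde r$ is defined by $\tfrac{2}{q}=\tfrac{d}{\tilde r}-\tfrac{d}{r}$, then $(q,\tilde r)$ is S-admissible, and Sobolev's embedding gives
\[
\Vert \Delta u\Vert_{L_t^qL_x^r}\lesssim \Vert |\nabla|^{2+2/q}u\Vert_{L_t^qL_x^{\tilde r}}.
\]
One then applies (ii) directly with $s=2+\tfrac{2}{q}$ at output $(q,\tilde r)$ and input $(a,b)=(2,\tfrac{2d}{d-2})$, yielding $\Vert\Delta u_0\Vert_{L^2}+\Vert |\nabla| h\Vert_{L_t^2L_x^{2d/(d+2)}}$ on the right. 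This route is shorter and sidesteps the Christ--Kiselev endpoint issue entirely, since the retarded estimate is already contained in (ii).
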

\begin{proof}
	For (i) and (ii) see \cite[Proposition 3.1]{Pausader}. Item (iii) follows from item (ii) by noticing that from Sobolev's embedding we have 
	$$
	\Vert |\nabla|^s u\Vert_{L_t^1L_x^r}\lesssim \Vert |\nabla|^{s+\frac{2}{q}} u\Vert_{L_t^qL_x^{\tilde{r}}},
	$$
	where $\tilde{r}$ is such that $\frac{2}{q}=\frac{d}{\tilde{r}}-\frac{d}{r}$, together with the fact that if $(q,r)$ is B-admissible then $(q,\tilde{r})$ is S-admissible. 
	\end{proof}
\begin{remark}
 The above results still hold with if we replace $e^{it\Delta^2}$ with $U_k(t)$, $\kl$ (see Definition \ref{solution}).
\end{remark}

\begin{definition}\label{QPMS}(Almost periodicity modulo symmetries).
	A solution $\mathbf{u}$ to \eqref{SISTB} on a time interval $I$ is said to be almost periodic modulo symmetries if there exist functions $N:I\longrightarrow \mathbb{R}^+$, $x:I\longrightarrow \mathbb{R}^d$ and $C:\mathbb{R}^+\longrightarrow\mathbb{R}^+$, such that for all $t\in I$ and $\eta>0$:
	$$
	\sum_{k=1}^l\int_{|x-x(t)|\geq C(\eta)/N(t)}\gamma_k|\Delta u_k(t,x)|^2dx\leq \eta
	$$
	and
	$$
	\sum_{k=1}^l\int_{|\xi|\geq C(\eta)N(t)}\gamma_k|\xi|^4|\hat{u}_k(t,\xi)|^2d\xi\leq \eta.
	$$ 
	The functions $N$ is called scale frequency function of the solution $\mathbf{u}$, $x$ is the spacial center function and $C$ is the compactness modulus function.
\end{definition}

\begin{remark}\label{almost periodic}
	Recall that a set $\mathcal{F}\subset\dot{H}^2(\mathbb{R}^d)$ is compact if and only if $\mathcal{F}$ is norm-bounded in $\dot{H}_x^2(\mathbb{R}^d)$ and, for all $\eta>0$, there exists a compactness modulus function $C(\eta)>0,$ such that
	$$
	\int_{|x|\geq C(\eta)}|\Delta f(x)|^2dx+\int_{|\xi|\geq C(\eta)}|\xi|^4|\hat{f}(\xi)|^2d\xi\leq \eta
	$$
	for all functions $f\in\mathcal{F}$ (this essentially follows from the well-known Kolmogorov-Riesz theorem combined with Plancherel's theorem). Also, by Sobolev embedding, every compact set in $\dot{H}_x^2(\R^d)$ is compact in $L_x^{\frac{2d}{d-4}}(\R^d)$. In particular, a solution of \eqref{SISTB} that is almost periodic modulo symmetries  satisfies
	$$
	\sum_{k=1}^l\int_{|x-x(t)|\geq C(\eta)/N(t)}|u_k(t,x)|^{\frac{2d}{d-4}}\lesssim \eta,
	$$
	for all $t\in I$ and $\eta>0$. 
\end{remark}

The next result is a useful tool in order to prove coercivity results. 
\begin{lemma}\label{lema comp}
	Let $I\subset\mathbb{R}$ be an open interval with $0\in I$, $a\in\R$, $b>0$ and $q>1$. Define $\gamma=(bq)^{-1/(q-1)}$ and $f(r)=a-r+br^q$, for $r>0$. Let $G(t)$ be a nonnegative continuous function such that $f\circ G\geq 0$ in $I$. Assume that $a<(1-\delta)\displaystyle\left(1-\displaystyle\frac{1}{q}\displaystyle\right)\gamma$, for some $\delta>0$ sufficiently small, we have
	\begin{itemize}
		\item[(i)] If $G(0)<\gamma$ then there exists $\delta_1=\delta_1(\delta)>0$ such that $G(t)<(1-\delta_1)\gamma$, for all $t\in I$;
		\item[(ii)] If $G(0)>\gamma$ then there exists $\delta_2=\delta_2(\delta)$ such that $G(t)>(1+\delta_2)\gamma$, for all $t\in I$.
	\end{itemize}
\end{lemma}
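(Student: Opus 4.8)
The plan is to treat this as an elementary real-analysis lemma about the scalar function $f(r)=a-r+br^q$, exploiting that $f\circ G\ge 0$ on $I$, that $G$ is continuous, and that $0\in I$ so the values $G(t)$ form an interval (the continuous image of a connected set). First I would analyze $f$ itself: since $b>0$ and $q>1$, $f'(r)=-1+bqr^{q-1}$ vanishes exactly at $r=\gamma=(bq)^{-1/(q-1)}$, which is the unique global minimum of $f$ on $(0,\infty)$; $f$ is strictly decreasing on $(0,\gamma)$ and strictly increasing on $(\gamma,\infty)$. A direct computation gives the minimum value $f(\gamma)=a-\gamma+b\gamma^q=a-\gamma\left(1-\frac1q\right)$, using $b\gamma^q=\gamma/q$ from the definition of $\gamma$. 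So under the hypothesis $a<(1-\delta)\left(1-\frac1q\right)\gamma$ we get $f(\gamma)<-\delta\left(1-\frac1q\right)\gamma<0$, i.e.\ $f$ is strictly negative in a neighborhood of $\gamma$, with a quantitative gap depending only on $\delta$ (and the fixed constants $q,b$, hence $\gamma$).

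Next I would locate the two roots of $f\ge 0$. Since $f(0^+)=a$ could be positive or negative, but in any case $f(\gamma)<0$ and $f(r)\to+\infty$ as $r\to\infty$, there is a root $r_+>\gamma$ with $f>0$ on $(r_+,\infty)$ and $f<0$ on $(\gamma,r_+)$. On the left side, if $a\ge 0$ then $f\ge 0$ on some $[0,r_-]$ with $0\le r_-<\gamma$ and $f<0$ on $(r_-,\gamma)$; if $a<0$ then $f<0$ on all of $(0,\gamma)$ and we may set $r_-=0$. The key quantitative point: because $f$ is continuous and $f(\gamma)$ is bounded away from $0$ by a $\delta$-dependent amount, the roots $r_\pm$ are bounded away from $\gamma$ by some $\delta_1=\delta_1(\delta)>0$ and $\delta_2=\delta_2(\delta)>0$, i.e.\ $r_-\le(1-\delta_1)\gamma$ and $r_+\ge(1+\delta_2)\gamma$. (This is where one uses that $q,b$ are fixed; the dependence of $r_\pm$ on $a$ is monotone and continuous, and one can extract $\delta_1,\delta_2$ by a compactness or explicit-estimate argument. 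Making the dependence clean is the one slightly delicate point — see below.)

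Now the connectedness argument finishes both items. The set $\{G(t):t\in I\}$ is a subinterval of $[0,\infty)$ containing $G(0)$, and $f\circ G\ge 0$ forces this interval to be contained in $\{r\ge 0: f(r)\ge 0\}=[0,r_-]\cup[r_+,\infty)$ (with $[0,r_-]$ absent, i.e.\ empty, when $a<0$). Since $(r_-,r_+)$ is a nonempty open gap separating these two pieces and the image interval cannot meet the gap, the image interval lies entirely in one component. In case (i), $G(0)<\gamma$; since $\gamma\in(r_-,r_+)$, $G(0)\notin[r_+,\infty)$, so the image lies in $[0,r_-]\subset[0,(1-\delta_1)\gamma]$, giving $G(t)\le(1-\delta_1)\gamma<(1-\delta_1/2)\gamma$ say — one gets the strict inequality after shrinking $\delta_1$, or simply states $G(t)<(1-\delta_1')\gamma$ with $\delta_1'$ slightly smaller. (Implicitly this requires the image to actually be confined; if $a<0$ the set $[0,r_-]$ is empty, so case (i)'s hypothesis $G(0)<\gamma$ together with $f\circ G\ge 0$ is vacuously impossible — consistent with the lemma, which then has no content in that subcase.) In case (ii), $G(0)>\gamma$ forces the image into $[r_+,\infty)\subset[(1+\delta_2)\gamma,\infty)$, yielding $G(t)\ge(1+\delta_2)\gamma$ for all $t\in I$, hence $G(t)>(1+\delta_2')\gamma$ after a harmless shrink.

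The main obstacle is purely bookkeeping: extracting $\delta_1,\delta_2$ that depend only on $\delta$ (not on $a$, and not on $t$) from the fact that $f(\gamma)<-\delta(1-\tfrac1q)\gamma$. The cleanest route is to note that $f$ restricted to $[\gamma,\infty)$ is strictly increasing and, near $\gamma$, $f(r)\ge f(\gamma)+c(r-\gamma)^2$ for an explicit $c=c(q,b)>0$ coming from $f''(\gamma)=bq(q-1)\gamma^{q-2}>0$; setting the right-hand side to $0$ gives $r_+-\gamma\ge\sqrt{-f(\gamma)/c}\gtrsim\sqrt{\delta\gamma/c}$, hence $\delta_2\gtrsim\sqrt{\delta}$ up to fixed constants, and symmetrically for $\delta_1$ on $(0,\gamma]$ (taking care of the endpoint $r_-=0$ case, which only helps). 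This makes the dependence $\delta_i=\delta_i(\delta)$ fully explicit, and everything else is the connectedness/IVT packaging above.
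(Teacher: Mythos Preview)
The paper does not prove this lemma; it simply refers to Corollary~3.2 in \cite{pastor2}. Your argument is the standard elementary one and is essentially what that reference does: analyze the strictly convex function $f$, observe that the hypothesis forces $f(\gamma)=a-(1-\tfrac1q)\gamma<-\delta(1-\tfrac1q)\gamma<0$, locate the two roots $r_\pm$ on either side of $\gamma$, and use connectedness of $G(I)$ to trap it in one component of $\{f\ge 0\}$. The structure is correct.

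There is, however, a reversed inequality in your quantitative step. From the \emph{lower} bound $f(r)\ge f(\gamma)+c(r-\gamma)^2$ and $f(r_+)=0$ you claim $r_+-\gamma\ge\sqrt{-f(\gamma)/c}$, but this inequality actually yields $(r_+-\gamma)^2\le -f(\gamma)/c$, i.e.\ an \emph{upper} bound on $|r_\pm-\gamma|$. To push the roots away from $\gamma$ you need the opposite estimate $f(r)\le f(\gamma)+C(r-\gamma)^2$ on a fixed neighborhood of $\gamma$, obtained by bounding $f''(r)=bq(q-1)r^{q-2}$ from \emph{above} on, say, $[\gamma/2,2\gamma]$. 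A cleaner route avoiding Taylor altogether: write $f(s\gamma)=a-\gamma h(s)$ with $h(s)=s-s^q/q$; then $h$ is strictly concave with maximum $h(1)=1-\tfrac1q$, so by continuity there exist $\delta_1,\delta_2>0$ depending only on $\delta$ and $q$ with $h(1-\delta_1)=h(1+\delta_2)=(1-\delta)(1-\tfrac1q)$, and then $f((1\pm\delta_i)\gamma)=a-(1-\delta)(1-\tfrac1q)\gamma<0$ directly, forcing $r_-<(1-\delta_1)\gamma$ and $r_+>(1+\delta_2)\gamma$ by the monotonicity of $f$ on each side of $\gamma$. With this correction your proof is complete.
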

\begin{proof}
	See Corollary 3.2 in \cite{pastor2}.
\end{proof}
\begin{remark}
	The result also holds with $\delta=0=\delta_1=\delta_2$. For details see \cite[Lemma 3.1]{pastor2}.
\end{remark}
\section{Consequences of the assumptions H's} This section is devoted to show some properties that are consequences of our assumptions \ref{H1}-\ref{H7}. The reader can check the details  in \cite[Chapter 2]{NoPa2} with the suitable modifications when necessary. We start with a lemma that characterizes  the nonlinearities $f_k$.

\begin{lemma}\label{consequences} Let $5\leq d\leq 16$. Suppose that \ref{H1} and \ref{H2} hold. Then
	\begin{itemize}
		\item[(i)] For any $\mathbf{z},\mathbf{z}'\in \mathbb{C}^l$, 
		\begin{equation}\label{fk21}
			|f_k(\mathbf{z})-f_k(\mathbf{z}')|\lesssim \sum_{m=1}^l\sum_{j=1}^l\left(|z_j|^{\frac{8}{d-4}}+|z'_j|^{\frac{8}{d-4}}\right)|z_m-z'_m|,\quad k=1,...,l.
		\end{equation}
		In particular, 
		$$
		|f_k(\mathbf{z})|\lesssim \sum_{m=1}^l|z_m|^{\frac{d+4}{d-4}}.
		$$
		\item[(ii)] Let $\mathbf{u}$ and $\mathbf{v}$ be complex-valued functions defined on $\R^d$.  Then, for $\kl$, $5\leq d< 12$,
		\begin{equation}
			\left|\nabla[f_k(\mathbf{u})-f_k(\mathbf{v})]\right|\lesssim \sum_{m=1}^l\sum_{j=1}^l|u_j|^{\frac{8}{d-4}}|\nabla u_m-\nabla v_m|+\sum_{m=1}^l\sum_{j=1}^l\left(|u_j|^{\frac{12-d}{d-4}}+|v_j|^{\frac{12-d}{d-4}}\right)|u_j-v_j||\nabla v_m|.
		\end{equation}
						and for $12\leq d\leq 16$,
			\begin{equation}\label{gradine}
				\left|\nabla[f_k(\mathbf{u})-f_k(\mathbf{v})]\right|\lesssim \sum_{m=1}^l\sum_{j=1}^l|u_j|^{\frac{8}{d-4}}|\nabla u_m-\nabla v_m|+\sum_{m=1}^l\sum_{j=1}^l|u_j-v_j|^{\frac{8}{d-4}}|\nabla v_m|, \quad 
				\end{equation}
		\item[(iii)] Let $1<p,q,r<\infty$ be such that $\displaystyle\frac{1}{r}=\frac{1}{p}\left(\frac{8}{d-4}\right)+\frac{1}{q}$. Then, for $k=1,\ldots,l$, $5\leq d< 12$,
		\[
		\Vert \nabla [f_k(\mathbf{u})-f_k(\mathbf{v})]\Vert_{\mathbf{L}^r}\lesssim \Vert\mathbf{u}\Vert_{\mathbf{L}^p}^{\frac{8}{d-4}}\Vert\nabla (\mathbf{u}-\mathbf{v})\Vert_{\mathbf{L}^q}+\left(\Vert \ub\Vert_{L^p}^{\frac{12-d}{d-4}}+\Vert \vb\Vert_{L^p}^{\frac{12-d}{d-4}}\right)\Vert \mathbf{u}-\mathbf{v}\Vert_{\mathbf{L}^p}\Vert\nabla\mathbf{v}\Vert_{\mathbf{L}^q}.
		\]
		and for $12\leq d\leq 16$,
		$$
		\Vert \nabla [f_k(\mathbf{u})-f_k(\mathbf{v})]\Vert_{\mathbf{L}^r}\lesssim \Vert\mathbf{u}\Vert_{\mathbf{L}^p}^{\frac{8}{d-4}}\Vert\nabla (\mathbf{u}-\mathbf{v})\Vert_{\mathbf{L}^q}+\Vert \mathbf{u}-\mathbf{v}\Vert_{\mathbf{L}^p}^{\frac{8}{d-4}}\Vert\nabla\mathbf{v}\Vert_{\mathbf{L}^q},
		$$
		In particular, taking $\mathbf{v}=0$ we have
		$$
		\Vert \nabla f_k(\mathbf{u})\Vert_{\mathbf{L}^r}\lesssim \Vert\mathbf{u}\Vert_{\mathbf{L}^p}^{\frac{8}{d-4}}\Vert\nabla \mathbf{u}\Vert_{\mathbf{L}^q}
		$$
	\end{itemize}
\end{lemma}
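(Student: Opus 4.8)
The plan is to prove the three items in sequence, obtaining (ii) and (iii) by feeding the pointwise information of (i) into the chain rule and then into Hölder's inequality.

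For (i), the starting point is the pointwise bound on the Wirtinger derivatives hidden in \ref{H1}--\ref{H2}, namely
$$
|\partial_{z_m}f_k(\zb)|+|\partial_{\bar z_m}f_k(\zb)|\lesssim \sum_{j=1}^{l}|z_j|^{\frac{8}{d-4}},\qquad \zb\in\C^l,\ k,m=1,\dots,l,
$$
which follows from \ref{H2} with $\zb'=\mathbf{0}$ once one knows $\partial_{z_m}f_k(\mathbf{0})=\partial_{\bar z_m}f_k(\mathbf{0})=0$ (a consequence of the power-type/homogeneous structure, so that $f_k$ vanishes to order $>1$ at the origin; see \cite[Chapter 2]{NoPa2}). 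Given this, I would expand $f_k$ along the segment $\wb(t)=(1-t)\zb'+t\zb$,
$$
f_k(\zb)-f_k(\zb')=\int_0^1\sum_{m=1}^{l}\Big(\partial_{z_m}f_k(\wb(t))\,(z_m-z_m')+\partial_{\bar z_m}f_k(\wb(t))\,\overline{(z_m-z_m')}\Big)\,dt,
$$
and estimate the integrand by the displayed bound together with $|w_j(t)|\le |z_j|+|z_j'|$ and the elementary inequality $(a+b)^s\lesssim a^s+b^s$ for $a,b\ge 0$, $s>0$; this yields \eqref{fk21}. The ``in particular'' assertion then follows by setting $\zb'=\mathbf{0}$ (using \ref{H1}) and applying Young's inequality in the form $|z_j|^{\frac{8}{d-4}}|z_m|\lesssim |z_j|^{\frac{d+4}{d-4}}+|z_m|^{\frac{d+4}{d-4}}$, which is legitimate because $\tfrac{8}{d+4}+\tfrac{d-4}{d+4}=1$ and $d\ge 5$.

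For (ii), I would use that $f_k\in C^1$ (by \ref{H2}) to apply the chain rule, $\nabla f_k(\ub)=\sum_{m}\big(\partial_{z_m}f_k(\ub)\nabla u_m+\partial_{\bar z_m}f_k(\ub)\overline{\nabla u_m}\big)$, and likewise for $\vb$. Subtracting, and adding and subtracting $\partial_{z_m}f_k(\ub)\nabla v_m$ (and its conjugate analogue), splits the difference into terms of the type $\partial_{z_m}f_k(\ub)(\nabla u_m-\nabla v_m)$ and $\big(\partial_{z_m}f_k(\ub)-\partial_{z_m}f_k(\vb)\big)\nabla v_m$. The first type is controlled by the pointwise derivative estimate above, producing the sum $\sum_{m,j}|u_j|^{\frac{8}{d-4}}|\nabla u_m-\nabla v_m|$ common to both dimension ranges. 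For the second type I would invoke \ref{H2} directly, obtaining $\sum_{m,j}|u_j-v_j|^{\frac{8}{d-4}}|\nabla v_m|$; this is already the claimed bound \eqref{gradine} when $12\le d\le 16$. When $5\le d<12$ the exponent $\tfrac{8}{d-4}$ exceeds $1$, so I would split it as $\tfrac{8}{d-4}=1+\tfrac{12-d}{d-4}$ and use $|u_j-v_j|^{\frac{12-d}{d-4}}\lesssim |u_j|^{\frac{12-d}{d-4}}+|v_j|^{\frac{12-d}{d-4}}$ to recover the stated form.

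For (iii) it only remains to take $L^r_x$ norms of the pointwise bounds of (ii) and apply Hölder's inequality with the exponents dictated by $\tfrac1r=\tfrac{8}{(d-4)p}+\tfrac1q$: each summand factors, e.g., as $\||u_j|^{\frac{8}{d-4}}\|_{L^{(d-4)p/8}}\|\nabla(\ub-\vb)\|_{L^q}=\|u_j\|_{L^p}^{\frac{8}{d-4}}\|\nabla(\ub-\vb)\|_{L^q}$ and, in the range $d<12$, as $\||u_j|^{\frac{12-d}{d-4}}\|_{L^{(d-4)p/(12-d)}}\|u_j-v_j\|_{L^p}\|\nabla v_m\|_{L^q}$, with the obvious analogues for the remaining terms; summing over $j,m$ gives the stated inequalities, and $\vb=0$ yields the last one. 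The main obstacle here is bookkeeping rather than an idea: one must keep the two dimension regimes straight, be careful in the $5\le d<12$ case when passing from the raw Hölder exponent $\tfrac{8}{d-4}>1$ produced by \ref{H2} to a form carrying a first power of $|u_j-v_j|$ with the ``extra'' power redistributed onto $|u_j|,|v_j|$, and---most delicately---justify at the outset that the Wirtinger gradient of $f_k$ vanishes at the origin, which is what makes the pointwise derivative bound above available in the first place.
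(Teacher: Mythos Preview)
Your proposal is correct and follows essentially the same route as the paper: the Fundamental Theorem of Calculus for (i), the chain rule combined with \ref{H2} for (ii), and H\"older's inequality for (iii). Your write-up is in fact more careful than the paper's sketch --- in particular, you make explicit the need for $\partial_{z_m}f_k(\mathbf{0})=\partial_{\bar z_m}f_k(\mathbf{0})=0$ (which the paper leaves implicit but which indeed follows from the homogeneity in \ref{H5}), and your splitting $|u_j-v_j|^{\frac{8}{d-4}}=|u_j-v_j|\cdot|u_j-v_j|^{\frac{12-d}{d-4}}$ in the range $5\le d<12$ is exactly what the reference to \cite[Remark~2.3]{cazetal} encodes.
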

\begin{proof}
	Part $(i)$ is consequence of the Fundamental Theorem of Calculus. Part $(ii)$ is proved by using the chain rule and \cite[Remark 2.3]{cazetal}, noticing that for $5\leq d\leq 16$ we have $\frac{8}{d-4}\leq 1$ and $\frac{8}{d-4}>1$ for $d>12$. Part $(iii)$ is obtained by combining part $(ii)$ and Hölder's inequality. 
\end{proof}
\begin{lemma}\label{gradsum}
 Assume that \ref{H2} holds. For all $J\in\N$ we have for $\kl$, $5\leq d< 12$,
	\[
	\left| \nabla\left[\sum_{j=1}^J f_k(\mathbf{u}_j)-f_k\left(\sum_{j=1}^J \mathbf{u}_j\right)\right]\right|\lesssim \sum_{j\neq i}\left( |\nabla \mathbf{u}_j||\mathbf{u}_i|^{\frac{8}{d-4}}+ |\ub_j||\ub_i|^{\frac{12-d}{d-4}}|\nabla \ub_i |\right) 
	\]
	and for $12\leq d\leq 16$,
	\[
	\left| \nabla\left[\sum_{j=1}^J f_k(\mathbf{u}_j)-f_k\left(\sum_{j=1}^J \mathbf{u}_j\right)\right]\right|\lesssim \sum_{j\neq i} |\nabla \mathbf{u}_j||\mathbf{u}_i|^{\frac{8}{d-4}}
	\]
\end{lemma}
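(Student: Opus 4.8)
The plan is to reduce the estimate for the gradient of the "algebraic defect" $\sum_{j=1}^J f_k(\mathbf{u}_j)-f_k\bigl(\sum_{j=1}^J \mathbf{u}_j\bigr)$ to a telescoping sum of the two-term differences already controlled by Lemma~\ref{consequences}(ii). First I would write, by a telescoping argument over the partial sums $\mathbf{S}_n:=\sum_{j=1}^n\mathbf{u}_j$,
\[
\sum_{j=1}^J f_k(\mathbf{u}_j)-f_k\Bigl(\sum_{j=1}^J \mathbf{u}_j\Bigr)
=\sum_{n=2}^{J}\Bigl[f_k(\mathbf{u}_n)+f_k(\mathbf{S}_{n-1})-f_k(\mathbf{S}_{n})\Bigr],
\]
so it suffices to bound the gradient of each bracket $f_k(\mathbf{u}_n)+f_k(\mathbf{S}_{n-1})-f_k(\mathbf{u}_n+\mathbf{S}_{n-1})$, i.e. to prove the two-summand version
\[
\bigl|\nabla\bigl[f_k(\mathbf{u})+f_k(\mathbf{v})-f_k(\mathbf{u}+\mathbf{v})\bigr]\bigr|
\lesssim |\nabla\mathbf{u}|\,|\mathbf{v}|^{\frac{8}{d-4}}+|\nabla\mathbf{v}|\,|\mathbf{u}|^{\frac{8}{d-4}}
\]
in the range $12\le d\le 16$, together with the extra lower-order term $|\mathbf{u}||\mathbf{v}|^{\frac{12-d}{d-4}}|\nabla\mathbf{v}|+|\mathbf{v}||\mathbf{u}|^{\frac{12-d}{d-4}}|\nabla\mathbf{u}|$ in the range $5\le d<12$; re-expanding the telescoping sum and relabelling then produces the stated double sum over $j\neq i$, since $|\mathbf{S}_{n-1}|\lesssim\sum_{i<n}|\mathbf{u}_i|$ and each cross term $|\mathbf u_j||\mathbf u_i|^{\cdots}$ with $j\ne i$ appears.

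For the two-summand estimate I would apply Lemma~\ref{consequences}(ii) with the pair $(\mathbf{u}+\mathbf{v},\mathbf{v})$: this gives
\[
\bigl|\nabla[f_k(\mathbf{u}+\mathbf{v})-f_k(\mathbf{v})]\bigr|
\lesssim \sum_{m,j}|u_j+v_j|^{\frac{8}{d-4}}|\nabla u_m|
+\sum_{m,j}|u_j|^{\frac{8}{d-4}}|\nabla v_m|
\]
for $12\le d\le 16$ (with the analogous three-term bound for $5\le d<12$). Comparing with $\nabla f_k(\mathbf{u})$, which by Lemma~\ref{consequences}(ii) with the pair $(\mathbf{u},0)$ satisfies $|\nabla f_k(\mathbf{u})|\lesssim\sum_{m,j}|u_j|^{\frac{8}{d-4}}|\nabla u_m|$, the two contributions $\sum_{m,j}|u_j|^{\frac{8}{d-4}}|\nabla u_m|$ cancel in the combination $\nabla[f_k(\mathbf u)+f_k(\mathbf v)-f_k(\mathbf u+\mathbf v)]$ up to terms where the factor $|u_j+v_j|^{\frac{8}{d-4}}$ must be replaced by $|u_j|^{\frac{8}{d-4}}$; one handles the discrepancy $|u_j+v_j|^{\frac{8}{d-4}}-|u_j|^{\frac{8}{d-4}}$ using the elementary inequality $\bigl||a+b|^\theta-|a|^\theta\bigr|\lesssim |b|^\theta$ for $0<\theta\le 1$ (the case $12\le d\le 16$, where $\theta=\tfrac{8}{d-4}\le 1$), and $\bigl||a+b|^\theta-|a|^\theta\bigr|\lesssim |b|(|a|^{\theta-1}+|b|^{\theta-1})$ for $1<\theta\le 2$ (the case $5\le d<12$, where $1<\tfrac{8}{d-4}\le 2$, using also $\tfrac{8}{d-4}-1=\tfrac{12-d}{d-4}$); the resulting terms are exactly the claimed right-hand sides.

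The main obstacle is bookkeeping rather than analysis: keeping the two dimensional regimes separate throughout (the threshold $d=12$ is where $\tfrac{8}{d-4}$ crosses $1$, which changes both the form of Lemma~\ref{consequences}(ii) and the elementary difference inequality used for $|u_j+v_j|^{\frac{8}{d-4}}-|u_j|^{\frac{8}{d-4}}$), and making sure that when the telescoping sum is re-expanded every cross term $|\nabla\mathbf u_j||\mathbf u_i|^{\frac{8}{d-4}}$ and (for $d<12$) $|\mathbf u_j||\mathbf u_i|^{\frac{12-d}{d-4}}|\nabla\mathbf u_i|$ indeed appears with a constant depending only on $l$ and $d$, not on $J$. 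One should double-check that applying Lemma~\ref{consequences}(ii) with argument $\mathbf u+\mathbf v$ is legitimate — it is, since that lemma holds for arbitrary complex-valued functions on $\mathbb R^d$ — and that the hidden constants absorb the finitely many indices $m,j\in\{1,\dots,l\}$.
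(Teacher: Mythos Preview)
Your telescoping reduction to the two-summand case is fine in principle, but the core of your argument for the two-summand estimate contains a genuine logical error. You bound $\bigl|\nabla[f_k(\mathbf{u}+\mathbf{v})-f_k(\mathbf{v})]\bigr|$ from above using Lemma~\ref{consequences}(ii), bound $|\nabla f_k(\mathbf{u})|$ from above by the same lemma, and then assert that ``the two contributions $\sum_{m,j}|u_j|^{\frac{8}{d-4}}|\nabla u_m|$ cancel''. They do not: these are \emph{upper bounds}, not the quantities themselves, and you cannot subtract upper bounds. The triangle inequality applied to $\nabla f_k(\mathbf{u})-\nabla[f_k(\mathbf{u}+\mathbf{v})-f_k(\mathbf{v})]$ only \emph{adds} the two bounds, leaving a diagonal term $\sum_{m,j}|u_j|^{\frac{8}{d-4}}|\nabla u_m|$ that is not of cross type and cannot be absorbed into the right-hand side of the lemma.

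The cancellation you are looking for does happen, but at the level of exact expressions, not bounds --- and this is precisely the paper's route. Expand $\nabla\bigl[\sum_j f_k(\mathbf{u}_j)-f_k(\mathbf{S})\bigr]$ directly by the chain rule
\[
\partial_{x_p} f_k(\mathbf{w})=\sum_{m=1}^l\Bigl(\tfrac{\partial f_k}{\partial z_m}(\mathbf{w})\,\partial_{x_p} w_m+\tfrac{\partial f_k}{\partial \bar z_m}(\mathbf{w})\,\partial_{x_p}\bar w_m\Bigr),
\]
with $\mathbf{S}=\sum_j\mathbf{u}_j$; since $\nabla S_m=\sum_j\nabla u_{jm}$, the expansion regroups \emph{exactly} as $\sum_j\sum_m\bigl[\partial_{z_m}f_k(\mathbf{u}_j)-\partial_{z_m}f_k(\mathbf{S})\bigr]\nabla u_{jm}+\text{c.c.}$, and now \ref{H2} bounds each bracket by $\sum_{j'}\bigl|\sum_{i\neq j}u_{ij'}\bigr|^{\frac{8}{d-4}}$, which is purely a cross term. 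No telescoping is needed, Lemma~\ref{consequences}(ii) is not invoked as a black box, and the diagonal term never appears. The distinction between the regimes $5\le d<12$ and $12\le d\le 16$ then enters only when you expand $\bigl|\sum_{i\neq j}u_{ij'}\bigr|^{\frac{8}{d-4}}$ into the individual $|\mathbf u_i|$, exactly via the elementary inequalities you wrote down.
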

\begin{proof}
	The proof is a straightforward calculation using the chain rule 
	$$
	\frac{\partial f_k}{\partial x_j}(\mathbf{u})=\sum_{m=1}^l\left(\frac{\partial f_k}{\partial z_m}(\mathbf{u})\frac{\partial u_m}{\partial x_j}+\frac{\partial f_k}{\partial \bar{z}_m}(\mathbf{u})\frac{\partial \bar{u}_m}{\partial x_j}\right),
	$$
	where $\mathbf{u}_j=(u_{j1},...,u_{jl})$ and $f_k(\mathbf{z})=f_k(z_1,...,z_l)$, the assumption \ref{H2} and Lemma \ref{consequences}.
\end{proof}

The next result gives us a characterization for the pontential function $F$. 
\begin{lemma}\label{lemma22}
	Assume that \ref{H1}, \ref{H2}, \ref{H3}, \ref{H5} and \ref{H7} hold. 
	\begin{itemize}
		\item[(i)] Let $\mathbf{z},\mathbf{z}'\in\C^l$. Then
		\begin{equation}\label{fk22}
			|\RE F(z)-\RE F(\mathbf{z}')|\lesssim \sum_{m=1}^l\sum_{j=1}^l\left(|z_j|^{\frac{d+4}{d-4}}+|z'_j|^{\frac{d+4}{d-4}}\right)|z_m-z_m'|.
		\end{equation}
		In particular, 
		\begin{equation}\label{fk23}
			|\RE F(\mathbf{z})|\lesssim \sum_{j=1}^l|z_j|^{\frac{2d}{d-4}}.
		\end{equation}
		\item[(ii)] Let $\mathbf{u}$ be a complex-valued function defined on $\R^d$. Then
		\begin{equation}\label{fk24}
			\RE \sum_{k=1}^lf_k(\ub)\widebar{u}_k=\left(\frac{2d}{d-4} \right)\RE F(\ub).
		\end{equation}
		\item[(iii)] Let $\ub:\R^d\rightarrow \C^l$. Then,
		\begin{equation}\label{regradF}
			\RE\sumk f_k(\ub)\nabla\bar{u}_k=\RE[\nabla F(\ub)].
		\end{equation}
		
		\item[(iv)] We have
		\begin{equation}\label{fk25}
			f_k(x)=\frac{\partial F}{\partial x_k}(\mathbf{x}),\quad \mathbf{x}\in\R^l.
		\end{equation}
		In addition, $F$ is positive on the positive cone of $\R^l$.
	\end{itemize}
\end{lemma}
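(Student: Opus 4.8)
The plan is to extract from \ref{H3} a single real--variable identity and then feed it into four one--parameter chain rules, one for each item. Writing the operators $\partial/\partial z_k$ and $\partial/\partial\overline{z}_k$ in terms of $\partial/\partial x_k,\partial/\partial y_k$ (with $z_m=x_m+iy_m$) and substituting into \ref{H3}, I would record that, since $\partial_{x_k}$ and $\partial_{y_k}$ commute with $\RE$,
\[
f_k(\mathbf z)=\frac{\partial}{\partial x_k}(\RE F)(\mathbf z)+i\,\frac{\partial}{\partial y_k}(\RE F)(\mathbf z),\qquad k=1,\dots,l,
\]
equivalently $\RE[f_k(\mathbf z)\overline{w}]=\partial_{x_k}(\RE F)(\mathbf z)\,\RE w+\partial_{y_k}(\RE F)(\mathbf z)\,\IM w$ for every $w\in\C$. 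I would also note that $F$ is of class $C^1$ (its first derivatives are those appearing in \ref{H3}, and they are continuous by \ref{H2}) and that $F(\mathbf 0)=0$ (from \ref{H5}, taking $\lambda\neq1$). Everything below is an application of this \emph{key identity}.

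\emph{Item (i).} For fixed $\mathbf z,\mathbf z'$ I would set $\mathbf w(t)=\mathbf z'+t(\mathbf z-\mathbf z')$ and apply the fundamental theorem of calculus to $t\mapsto\RE F(\mathbf w(t))$ on $[0,1]$; the chain rule together with the key identity gives
\[
\frac{d}{dt}\RE F(\mathbf w(t))=\RE\sum_{m=1}^{l}f_m(\mathbf w(t))\,\overline{(z_m-z_m')},
\]
hence $\RE F(\mathbf z)-\RE F(\mathbf z')=\RE\int_0^1\sum_m f_m(\mathbf w(t))\overline{(z_m-z_m')}\,dt$. Using $|w_j(t)|\le|z_j|+|z_j'|$ for $t\in[0,1]$, the pointwise bound $|f_m(\mathbf w)|\lesssim\sum_j|w_j|^{\frac{d+4}{d-4}}$ from Lemma \ref{consequences}(i), and $(a+b)^p\lesssim a^p+b^p$ for $p\ge1$, I would obtain \eqref{fk22}. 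Then \eqref{fk23} follows on taking $\mathbf z'=\mathbf 0$ (so $f_m(\mathbf 0)=0$ and $F(\mathbf 0)=0$) and invoking Young's inequality $|z_j|^{\frac{d+4}{d-4}}|z_m|\lesssim|z_j|^{\frac{2d}{d-4}}+|z_m|^{\frac{2d}{d-4}}$; alternatively \eqref{fk23} drops straight out of \ref{H5} (which forces $\RE F(\mathbf z)=|\mathbf z|^{\frac{2d}{d-4}}\RE F(\mathbf z/|\mathbf z|)$) together with the boundedness of $\RE F$ on the unit sphere.

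\emph{Items (ii) and (iii).} These are the same computation run along two different curves. For (ii) I would differentiate $F(\lambda\mathbf z)=\lambda^{\frac{2d}{d-4}}F(\mathbf z)$ (from \ref{H5}) at $\lambda=1$, getting $\sum_k[\frac{\partial F}{\partial z_k}(\mathbf z)z_k+\frac{\partial F}{\partial\overline{z}_k}(\mathbf z)\overline{z_k}]=\frac{2d}{d-4}F(\mathbf z)$; taking real parts, using $\RE[\frac{\partial F}{\partial z_k}(\mathbf z)z_k]=\RE[\overline{\tfrac{\partial F}{\partial z_k}(\mathbf z)}\,\overline{z_k}]$, and then \ref{H3}, the left side becomes $\RE\sum_k f_k(\mathbf z)\overline{z_k}$, i.e.\ \eqref{fk24} after setting $\mathbf z=\mathbf u(x)$. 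For (iii) I would instead apply the spatial chain rule to $x\mapsto\RE F(\mathbf u(x))$ componentwise, $\partial_{x_j}\RE F(\mathbf u)=\RE\sum_k[\frac{\partial F}{\partial z_k}(\mathbf u)\partial_{x_j}u_k+\frac{\partial F}{\partial\overline{z}_k}(\mathbf u)\partial_{x_j}\overline{u_k}]$, and the identical rearrangement ($\RE[\frac{\partial F}{\partial z_k}(\mathbf u)\partial_{x_j}u_k]=\RE[\overline{\tfrac{\partial F}{\partial z_k}(\mathbf u)}\,\partial_{x_j}\overline{u_k}]$ plus \ref{H3}) collapses it to $\RE\sum_k f_k(\mathbf u)\partial_{x_j}\overline{u_k}$, which is \eqref{regradF}.

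\emph{Item (iv).} Here I would restrict to $\mathbf x\in\R^l$ ($y_m=0$): by \ref{H7}, $\IM F$ vanishes on the real slice, so $\partial_{x_k}F(\mathbf x)=\partial_{x_k}(\RE F)(\mathbf x)=\RE f_k(\mathbf x)$; since the second clause of \ref{H7} presupposes (hence supplies) that $f_k$ is real on the positive cone, there $\IM f_k(\mathbf x)=\partial_{y_k}(\RE F)(\mathbf x)=0$ and the key identity reduces to \eqref{fk25}. For the positivity assertion I would integrate $\frac{d}{dt}F(t\mathbf x)=\sum_k\partial_{x_k}F(t\mathbf x)\,x_k=\sum_k f_k(t\mathbf x)\,x_k\ge0$ (by \ref{H7}) over $t\in[0,1]$ and use $F(\mathbf 0)=0$ to conclude $F(\mathbf x)\ge0$ on the positive cone. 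The only delicate point in the whole argument is tracking the complex conjugate on $\partial F/\partial z_k$ correctly through each chain rule — this is exactly what makes the key identity and the rearrangements in (ii)--(iii) valid — while the rest is routine calculus combined with Lemma \ref{consequences}(i).
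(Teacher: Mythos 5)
Your proof is correct and follows essentially the same route as the paper's: your ``key identity'' $f_k=\partial_{x_k}(\RE F)+i\,\partial_{y_k}(\RE F)$ is precisely the paper's identity $f_k(\mathbf z)=2\,\partial_{\bar z_k}\RE F(\mathbf z)$ written in real coordinates, and each item is then obtained by the same one-parameter chain-rule computations (segment from $\mathbf z'$ to $\mathbf z$ for (i), the dilation curve $\lambda\mapsto\lambda\mathbf z$ for (ii), the spatial variable $x_j$ for (iii), and the real slice for (iv)). The only minor additions are your alternative derivation of \eqref{fk23} directly from \ref{H5} via homogeneity and compactness of the unit sphere, and your explicit remark that on $\R^l$ the argument yields \eqref{fk25} only where $f_k$ is known to be real (i.e.\ on the positive cone, per \ref{H7}), which is in fact all that the subsequent positivity claim requires.
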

\begin{proof}
	To prove part (i) we first note that we can write
	\begin{equation}\label{refk}
		f_k(\zb)=2\frac{\partial}{\partial \widebar{z}_k}\RE F(\zb).
	\end{equation}
	Combining with \eqref{fk21} and using the Fundamental Theorem of Calculus we get \eqref{fk22}. The inequality \eqref{fk23} is the particular case of \eqref{fk22} when $\zb=\mathbf{0}$. For part (ii) we differentiate both sides of \ref{H5} with respect to $\lambda$ and evaluate at $\lambda=1$ and then take the real part and use \ref{H3}. For part (iii), we first differentiate $F$ with respect to $x_j$, apply the chain rule. Then, we take the real part of the result ad use \ref{H3} to get the result. The last part is consequence of \ref{H7}, \eqref{refk} and \eqref{fk25}.
\end{proof}

	Next lemma show us that the structure of our system is Gauge invariant. 
\begin{lemma}\label{GC}
	Assume that \ref{H3} and \ref{H4} hold. For any $\theta\in\R$ and $\zb\in\C^l$, we have
	\begin{itemize}
		\item[(i)] $\RE F\displaystyle\left(e^{i\frac{\sigma_1}{2}\theta}z_1,...,e^{i\frac{\sigma_l}{2}\theta}z_l\right)=\RE F(\zb).$\\
		\item[(ii)] The nonlinearities $f_k$, $k=1,...,l$ satisfy the Gauge condition
		$$
		f_k\left( e^{i\frac{\sigma_1}{2}\theta}z_1,...,e^{i\frac{\sigma_l}{2}\theta}z_l\right)= e^{i\frac{\sigma_k}{2}\theta}z_kf_k(\zb).
		$$
	\end{itemize}
\end{lemma}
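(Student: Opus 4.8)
\emph{Proof strategy.} Part (i) is nothing more than a restatement of hypothesis \ref{H4} in the notation of the standing waves \eqref{standingwaves}, so there is nothing to prove there; the substance of the lemma is the Gauge condition in part (ii). The plan is to differentiate the invariance of $\RE F$ furnished by part (i) with respect to the Wirtinger variable $\bar z_k$ and read off the transformation law for $f_k$.

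The first step is to record the identity
$$
f_k(\zb)=2\,\frac{\partial}{\partial\bar z_k}\RE F(\zb),\qquad k=1,\dots,l,
$$
already used in the proof of Lemma \ref{lemma22} (see \eqref{refk}): since $\RE F=\tfrac12(F+\overline F)$ and $\partial_{\bar z_k}\overline F=\overline{\partial_{z_k}F}$, hypothesis \ref{H3} gives $2\,\partial_{\bar z_k}\RE F=\partial_{\bar z_k}F+\overline{\partial_{z_k}F}=f_k$. Under the running hypotheses $\RE F$ is of class $C^1$ in the real coordinates $\RE z_j,\IM z_j$ — with $\partial_{\bar z_k}\RE F=\tfrac12 f_k$ and $\partial_{z_k}\RE F=\tfrac12\overline{f_k}$ continuous — so the Wirtinger chain rule used below is legitimate, including at points where some $z_j$ vanishes.

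Next I would fix $\theta\in\R$, set $w_j:=e^{i\frac{\sigma_j}{2}\theta}z_j$, and note that $z_j\mapsto w_j$ is linear and holomorphic, so $\partial w_j/\partial\bar z_k=0$ while $\partial\bar w_j/\partial\bar z_k=\overline{\partial w_j/\partial z_k}=e^{-i\frac{\sigma_k}{2}\theta}\delta_{jk}$. Applying $\partial/\partial\bar z_k$ to the identity $\RE F(w_1,\dots,w_l)=\RE F(\zb)$ from part (i), the chain rule collapses the left-hand side to the single surviving term
$$
\frac{\partial}{\partial\bar z_k}\RE F(w_1,\dots,w_l)=e^{-i\frac{\sigma_k}{2}\theta}\,\frac{\partial\RE F}{\partial\bar w_k}(w_1,\dots,w_l)=\tfrac12\,e^{-i\frac{\sigma_k}{2}\theta}\,f_k(w_1,\dots,w_l),
$$
whereas the right-hand side equals $\tfrac12 f_k(\zb)$. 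Cancelling $\tfrac12$ gives $f_k(w_1,\dots,w_l)=e^{i\frac{\sigma_k}{2}\theta}f_k(\zb)$, which is the asserted Gauge condition. The computation is short; the only point that needs a little care is the bookkeeping of the Wirtinger chain rule under $\zb\mapsto\wb$ — namely, that holomorphy of each $w_j$ in $z$ annihilates all but the $\bar w_k$–term — together with the preliminary verification that $\RE F\in C^1$, which is where \ref{H3} and Lemma \ref{consequences} enter.
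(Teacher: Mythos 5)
Your proof is correct and follows essentially the same route as the paper: part (i) is just \ref{H4}, and part (ii) follows by applying $\partial/\partial\bar z_k$ to that invariance via the Wirtinger chain rule together with the identity $f_k=2\,\partial_{\bar z_k}\RE F$ (the paper's one-line proof sketch is exactly this, just written more tersely). Note that your computation yields $f_k(\wb)=e^{i\frac{\sigma_k}{2}\theta}f_k(\zb)$ with no extra factor of $z_k$; the $z_k$ appearing in the statement of (ii) is a typo in the paper, and your version is the correct gauge condition.
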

\begin{proof}
	Part (i) is a consequence of the fact that we can write
	$$
	f_k(\zb)=2\frac{\partial}{\partial\widebar{z}_k}\RE F(\zb)
	$$
	and the chain rule. Part (ii) is consequence of part (i) and once again the chain rule. 
\end{proof}

Next lemma is very helpful in order to get conservation laws.

\begin{lemma}\label{MRequality} Assume that \ref{H3} and \ref{H4} hold. Then, for all $\zb\in\C^l$,
	$$ \IM \sumk \frac{\alpha_k}{\gamma_{k}}f_k(\zb)\bar{z}_k=0.   $$
	\end{lemma}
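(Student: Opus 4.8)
The plan is to differentiate the phase-invariance identity \ref{H4} with respect to the rotation parameter $\theta$ and evaluate the result at $\theta=0$; since everything here is pointwise in $\zb\in\C^l$, no analysis is involved. Concretely, fix $\zb\in\C^l$ and set
\[
g(\theta):=\RE F\!\left(e^{i\frac{\alpha_1}{\gamma_1}\theta}z_1,\ldots,e^{i\frac{\alpha_l}{\gamma_l}\theta}z_l\right),
\]
which by \ref{H4} is constant in $\theta$, so $g'(0)=0$. Computing $g'$ through the chain rule for the Wirtinger operators $\partial/\partial z_k$ and $\partial/\partial\bar z_k$, using $\frac{d}{d\theta}\big(e^{i\frac{\alpha_k}{\gamma_k}\theta}z_k\big)=i\frac{\alpha_k}{\gamma_k}e^{i\frac{\alpha_k}{\gamma_k}\theta}z_k$ together with the conjugate relation in the $\bar z_k$ slots, evaluation at $\theta=0$ gives
\[
0=i\sumk\frac{\alpha_k}{\gamma_k}\left[\frac{\partial\RE F}{\partial z_k}(\zb)\,z_k-\frac{\partial\RE F}{\partial\bar z_k}(\zb)\,\bar z_k\right].
\]
(Equivalently one may differentiate the identity in Lemma \ref{GC}(i); the computation is the same.)

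Next I would use that $\RE F$ is real valued, so that $\overline{\partial_{z_k}\RE F}=\partial_{\bar z_k}\RE F$; hence the bracket above equals $\partial_{z_k}\RE F(\zb)z_k-\overline{\partial_{z_k}\RE F(\zb)z_k}=2i\,\IM\big(\partial_{z_k}\RE F(\zb)z_k\big)$, and the displayed identity becomes $\sumk\frac{\alpha_k}{\gamma_k}\,\IM\big(\partial_{z_k}\RE F(\zb)z_k\big)=0$. Finally, assumption \ref{H3} together with $\RE F=\frac12(F+\overline{F})$ gives $f_k=\partial_{\bar z_k}F+\overline{\partial_{z_k}F}=2\,\partial_{\bar z_k}\RE F$ (this is precisely \eqref{refk}); conjugating, and using once more that $\RE F$ is real, $\overline{f_k}=2\,\partial_{z_k}\RE F$. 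Substituting into the last identity and noting $\IM\big(\overline{f_k(\zb)}\,z_k\big)=-\IM\big(f_k(\zb)\,\bar z_k\big)$ yields $\sumk\frac{\alpha_k}{\gamma_k}\,\IM\big(f_k(\zb)\,\bar z_k\big)=0$, which, since the $\alpha_k/\gamma_k$ are real, is exactly the assertion.

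The only mildly delicate point is the bookkeeping with Wirtinger derivatives of complex-valued functions, in particular keeping straight $\overline{\partial_{z_k}G}=\partial_{\bar z_k}\overline{G}$ and, when $G$ is real, $\overline{\partial_{z_k}G}=\partial_{\bar z_k}G$. Beyond this, the lemma is an immediate consequence of \ref{H3}, \ref{H4}, and the chain rule; in particular it uses neither the power-type bounds \ref{H2} nor the homogeneity \ref{H5}.
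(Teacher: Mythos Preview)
Your proof is correct and follows essentially the same approach as the paper: differentiate the phase-invariance \ref{H4} in $\theta$, apply the Wirtinger chain rule, and use \ref{H3} (equivalently \eqref{refk}) to express the result in terms of $f_k$. The only cosmetic differences are that you work with $\RE F$ and evaluate at $\theta=0$, whereas the paper differentiates $F$ itself, takes real parts, and invokes Lemma~\ref{GC} to pass from the rotated point back to $\zb$; your route is slightly more direct but the substance is identical.
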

\begin{proof}
Let $\zb\in\C^l$. For $\theta\in\R$, denote $(w_1,...,w_l):=\left(e^{i\frac{\alpha_1}{\gamma_1}}z_1,...,e^{i\frac{\alpha_l}{\gamma_l}}z_l\right)$. Thus by applying Lemma \ref{GC} we have that $f_k(\wb)\bar{w}_k=f_k(\zb)\bar{z}_k$ for all $\kl$. Now, set $h(\theta)=F(\wb)$. After apply the chain rule, take the real part in order to get 
	$$
	\RE \frac{dh}{d\theta}=\IM\sumk\left(\frac{\alpha_k}{\gamma_k}\right)f_k(\zb)\bar{z}_k.
	$$
	Moreover, taking the derivative with respect to $\theta$ on both sides of \ref{H4}, we get that $\RE\frac{dh}{d\theta}=0$ and the result follows. 
\end{proof}
The last consequence of the assumptions is the following lemma.
 \begin{lemma}\label{fkcontinua}
Let $1\leq d\leq 16$. Suppose that \ref{H1} and \ref{H2} hold. Then, for all $\kl$ we have $f_k\in C(\mathbf{H}^2(\R^d),\mathbf{H}^{-4}(\R^d))$.
\end{lemma}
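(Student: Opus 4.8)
The plan is to show that, for each $k$, the map $\mathbf{u}\mapsto f_k(\mathbf{u})$ is well-defined and continuous from $\mathbf{H}^2(\R^d)$ into $\mathbf{H}^{-4}(\R^d)$. The key analytic inputs are the growth and difference bounds from Lemma \ref{consequences}(i), the Sobolev embedding $H^2(\R^d)\hookrightarrow L^{\frac{2d}{d-4}}(\R^d)$ from Lemma \ref{embedding}(i) (valid since $2<d/2$ for $d\geq 5$), and duality together with the fact that $\mathbf{H}^{-4}(\R^d)$ is a large target space, so only a very weak Lebesgue bound on $f_k(\mathbf{u})$ is needed. First I would establish that $f_k$ maps $\mathbf{H}^2$ into $\mathbf{H}^{-4}$: by Lemma \ref{consequences}(i) we have the pointwise bound $|f_k(\mathbf{z})|\lesssim\sum_{m}|z_m|^{\frac{d+4}{d-4}}$, hence $\|f_k(\mathbf{u})\|_{L^{\frac{2d}{d+4}}}\lesssim\sum_m\|u_m\|_{L^{\frac{2d}{d-4}}}^{\frac{d+4}{d-4}}\lesssim\sum_m\|u_m\|_{H^2}^{\frac{d+4}{d-4}}<\infty$, where I used that $\frac{d+4}{d-4}\cdot\frac{2d}{d+4}=\frac{2d}{d-4}$. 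Since $L^{\frac{2d}{d+4}}(\R^d)\hookrightarrow H^{-4}(\R^d)$ — indeed, by Sobolev embedding $H^4(\R^d)\hookrightarrow L^{\frac{2d}{d-8}}(\R^d)$ when $d>8$ and $H^4\hookrightarrow L^q$ for all $q\in[2,\infty)$ (or $L^\infty$) when $d\leq 8$, and $\frac{2d}{d+4}$ is the conjugate exponent in all these cases for $5\leq d\leq 16$ — duality gives $L^{\frac{2d}{d+4}}\hookrightarrow H^{-4}$. Thus $f_k(\mathbf{u})\in\mathbf{H}^{-4}(\R^d)$.

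For continuity, I would take $\mathbf{u}_n\to\mathbf{u}$ in $\mathbf{H}^2(\R^d)$ and show $f_k(\mathbf{u}_n)\to f_k(\mathbf{u})$ in $\mathbf{H}^{-4}(\R^d)$. Using the difference estimate \eqref{fk21},
\[
|f_k(\mathbf{u}_n)-f_k(\mathbf{u})|\lesssim\sum_{m,j}\left(|u_{n,j}|^{\frac{8}{d-4}}+|u_j|^{\frac{8}{d-4}}\right)|u_{n,m}-u_m|,
\]
and applying Hölder with exponents $\frac{2d}{d-4}\cdot\frac{d-4}{8}=\frac{d}{4}$ on the first factor and $\frac{2d}{d-4}$ on the second, one gets
\[
\|f_k(\mathbf{u}_n)-f_k(\mathbf{u})\|_{L^{\frac{2d}{d+4}}}\lesssim\left(\sum_j\|u_{n,j}\|_{L^{\frac{2d}{d-4}}}^{\frac{8}{d-4}}+\|u_j\|_{L^{\frac{2d}{d-4}}}^{\frac{8}{d-4}}\right)\sum_m\|u_{n,m}-u_m\|_{L^{\frac{2d}{d-4}}}.
\]
The first parenthetical factor is bounded uniformly in $n$ (since $\mathbf{u}_n$ is convergent, hence bounded, in $\mathbf{H}^2$, which embeds in $\mathbf{L}^{\frac{2d}{d-4}}$), while the last sum tends to $0$ by the Sobolev embedding. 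Therefore $f_k(\mathbf{u}_n)\to f_k(\mathbf{u})$ in $\mathbf{L}^{\frac{2d}{d+4}}(\R^d)$, and by the continuous embedding $\mathbf{L}^{\frac{2d}{d+4}}(\R^d)\hookrightarrow\mathbf{H}^{-4}(\R^d)$ established above, also in $\mathbf{H}^{-4}(\R^d)$. This proves $f_k\in C(\mathbf{H}^2(\R^d),\mathbf{H}^{-4}(\R^d))$.

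The only mild obstacle is bookkeeping the exponent conditions: one must check that $\frac{2d}{d+4}\geq 1$ (true for all $d\geq 1$) and that $L^{\frac{2d}{d+4}}\hookrightarrow H^{-4}$ holds across the full range $5\leq d\leq 16$, which splits into the cases $d\leq 8$ (where $H^4$ embeds in every $L^q$, $q<\infty$, and $\frac{2d}{d-4}>\frac{2d}{d+4}{}'$ is not the issue — one just needs $\frac{2d}{d+4}>1$) and $8<d\leq 16$ (where the dual Sobolev exponent of $H^4$ is exactly $\frac{2d}{d+4}$, matching perfectly since $d-8\le d$ keeps things in range). Once these elementary verifications are in place, the argument is a routine combination of Lemma \ref{consequences}(i), Hölder, and Sobolev embedding; no genuinely hard step is involved, since the $H^{-4}$ target is so forgiving that we never need to differentiate $f_k$.
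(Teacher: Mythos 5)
Your proof is correct, and it takes a genuinely different route from the paper's. The paper applies the difference bound \eqref{fk21} and then invokes the Sobolev multiplication law (Lemma \ref{sobolevmult}) to land the product in $H^{-4}$; this is precisely the step that imposes $d\leq 16$ (cf.\ Remark \ref{dimension}), and the paper's write-up silently discards the fractional exponent $\tfrac{8}{d-4}$ on the first factor, so the appeal to $H^2\times H^2\hookrightarrow H^{-4}$ is not quite rigorous as stated. Your argument sidesteps both issues by staying in Lebesgue spaces: H\"older with exponents $\tfrac{d}{4}$ and $\tfrac{2d}{d-4}$ places $f_k(\mathbf{u}_n)-f_k(\mathbf{u})$ in $L^{\frac{2d}{d+4}}$, and you then use $L^{\frac{2d}{d+4}}\hookrightarrow H^{-4}$. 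That last embedding is the one spot to tidy: you appeal to the sharp exponent $H^4\hookrightarrow L^{2d/(d-8)}$ for $d>8$ and say $\tfrac{2d}{d+4}$ is its conjugate, but in fact $\bigl(\tfrac{2d}{d-8}\bigr)'=\tfrac{2d}{d+8}$. What you actually need — and what works uniformly for every $d\geq 5$, not just $d\leq 16$ — is the non-sharp chain $H^4\hookrightarrow H^2\hookrightarrow L^{2d/(d-4)}$, whose dual gives $L^{2d/(d+4)}\hookrightarrow H^{-4}$ since $\bigl(\tfrac{2d}{d-4}\bigr)'=\tfrac{2d}{d+4}$. With that one correction your route is both simpler and more general than the paper's: it avoids the fractional-power issue in the multiplication law and drops the upper bound on the dimension entirely.
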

\begin{proof}
	Let $\ub_i\in\mathbf{H}^2$ be such that $\ub_i\rightarrow \ub$ in $\mathbf{H}^2$, as $i\rightarrow\infty$. In particular, $\Vert \ub_i\Vert_{\mathbf{H}^2}\leq M$, for some $M>0$. By Lemma \ref{consequences} and Lemma \ref{sobolevmult} we have, for $\kl$,
	\begin{equation*}
		\begin{split}
			\Vert f_k(\ub_i)-f_k(\ub)\Vert_{H^{-4}}&\lesssim \sumj \summ \Vert \left(|u_{ij}|+|u_j|\right)|u_{im}-u_m|\Vert_{H^{-4}}\\
			&\lesssim \sumj \summ \Vert |u_{ij}||u_{mi}-u_m|\Vert_{H^{-4}}+\sumj \summ \Vert |u_j||u_{im}-u_m|\Vert_{H^{-4}}\\
			&\lesssim \sumj \summ \Vert u_{ij}\Vert_{H^2}\Vert u_{im}-u_m\Vert_{H^2}+\sumj\summ \Vert_{u_j}\Vert_{H^2}\Vert u_{im}-u_m\Vert_{H^2}\\
			&\lesssim M\summ \Vert u_{im}-u_m\Vert_{H^2}+\sumj\summ \Vert_{u_j}\Vert_{H^2}\Vert u_{im}-u_m\Vert_{H^2}.\\
		\end{split}
	\end{equation*}
	By the convergence of $(\ub_i)$ we see that the right-hand side goes to $0$ as $i\rightarrow\infty$. Hence, $f_k(\ub_i)\rightarrow f_k(\ub)$ in $\Hb^{-4}(\R^d)$ and the result follows. 
\end{proof}
 
 \begin{remark}\label{dimension}
 	While the  above result allows us to apply Proposition \ref{strichartz}, it also imposes a restriction on the dimension we can work with. To address this issue, alternative tools must be employed. 
 \end{remark}

\section{Local well-posedness} 

In this section we will develop the local theory for \eqref{SISTB} in the energy critical case. To begin with, we will set some norms that will be useful along the work. For $I\subset\R$, we set the following norms
\begin{equation}\label{norms}
	\begin{split}
		\Vert f \Vert_{M(I)} &:= \Vert \Delta f \Vert_{L_t^\frac{2(d+4)}{d-4}L_x^{\frac{2d(d+4)}{d^2+16}}(I\times\R^d)}  \\
		\Vert f \Vert_{W(I)} &:= \Vert \nabla f \Vert_{L_t^\frac{2(d+4)}{d-4}L_x^{\frac{2d(d+4)}{d^2-2d+8}}(I\times\R^d)}  \\
		\Vert f \Vert_{Z(I)} &:= \Vert  f \Vert_{L_{t,x}^\frac{2(d+4)}{d-4}(I\times\R^d)} \\
		\Vert f \Vert_{N(I)} &:= \Vert \nabla f\Vert_{L_t^2L_x^{\frac{2d}{d+2}}(I\times\R^d)}  \\
		\Vert f \Vert_{B(I)} &:= \Vert  f \Vert_{L_{t,x}^\frac{2(d+4)}{d}(I\times\R^d)}  \\
	\end{split}
\end{equation}

\begin{remark}\label{estimatespaces}
Let us establish here some estimates that will be used several times during the section. We start noticing that for $\ub\in\Dz$, by Sobolev's inequality we have
$$
\Vert \ub \Vert_{\Zb(I)} \lesssim \Vert \ub \Vert_{\Wb(I)}\lesssim\Vert \ub \Vert_{\Mb(I)}.
$$
Moreover, by using Lemma \ref{consequences}, one can easily see that 
$$
\Vert f_k(\ub)\Vert_{N(I)}\lesssim \Vert \ub\Vert_{\Zb(I)}^{\frac{8}{d-4}}\Vert \ub\Vert_{\Wb(I)}\lesssim \Vert \ub\Vert_{\Zb(I)}^{\frac{8}{d-4}}\Vert \ub\Vert_{\Mb(I)}.
$$
Another useful consequence of Lemma \ref{consequences} is that if $\ub\in\Wb(I)$ then $f_k(\ub)\in N(I)$. Also, observe that the pairs in the norm $M(I)$ and $B(I)$ are B-admissible and the pair in the norm $N(I)$ is a exponent conjugate of an S-admissible pair, namely $(2,2d/(d-2))$. Then, by Strichartz estimate \eqref{S319}, we have
$$
\Vert \ub\Vert_{\Mb (I)}\lesssim \Vert \Delta \ub_0\Vert_{\Lb^2}+\Vert f_k(\ub)\Vert_{N(I)},
$$ 
which means that we can control the second derivative of $\ub$ using only one derivative of $f_k$. One can also notice that, if $I_t=[0,t]\subset I$ by using the above estimates we arrive at $\Vert \ub\Vert_{\Mb(I_t)}\leq  C\left( \Vert\Delta\ub_0\Vert_{\Lb^2}+\Vert\ub\Vert_{\Mb(I_t)}^{\frac{d+4}{d-4}}\right)$. Then, provided that all involved norms are finite, one can use a continuity argument to get
$$
\Vert \ub\Vert_{\Mb(I)}\lesssim  \Vert\Delta\ub_0\Vert_{\Lb^2}.
$$

\end{remark}

The local existence result is a straight forward adaptation of the standard contraction map argument presented in \cite[Chapter 4]{cazenave}, so we will omit the proof. The result reads as follows.
\begin{theorem}[Local existence]\label{localexi}	Let $5\leq d\leq 16$. There exist $\delta>0$ such that for any initial data $\ub_0\in\mathbf{H}^2$, and a time interval $I=[0,T]$. If 
	\begin{equation}\label{P52}
		\Vert U_k(t)u_{k0}\Vert_{W(I)}\leq\delta,\quad \kl,
		\end{equation}
		  then there exists a unique solution $\ub\in\mathbf{C}_t\Hb^2(I\times\R^d)$ such that $\ub(0)=\ub_0$ and $\ub\in \Mb(I)\cap \Bb(I)$. Moreover, the following estimates hold
		  \begin{equation}\label{P53}
		  		  	\Vert \ub\Vert_{\Wb(I)}	\leq 2\delta,\hbox{ and}\quad 
		  	\Vert \ub\Vert_{\Mb(I)}+\Vert \ub\Vert_{\Lb_t^\infty \Hb_x^2}\leq C\left(\Vert \ub_0\Vert_{\Hb^2}+\delta^{\frac{d+4}{d-4}}\right).
		  	\end{equation}
		\end{theorem}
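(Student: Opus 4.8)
The plan is to run the standard contraction-mapping argument of \cite[Chapter 4]{cazenave}. Introduce the Duhamel map $\Phi=(\Phi_1,\dots,\Phi_l)$,
\[
\Phi_k(\ub)(t):=U_k(t)u_{k0}-i\int_0^tU_k(t-s)f_k(\ub(s))\,ds,\qquad k=1,\dots,l,
\]
whose fixed points in $\mathbf{C}_t\Hb^2(I\times\R^d)$ are, by Definition \ref{solution}, exactly the solutions of \eqref{SISTB} with data $\ub_0$. I would work on
\[
X:=\big\{\ub\in\mathbf{C}_t\Hb^2(I\times\R^d):\ \Vert\ub\Vert_{\Wb(I)}\le 2\delta,\ \Vert\ub\Vert_{\Mb(I)}+\Vert\ub\Vert_{\Lb_t^\infty\Hb_x^2}\le A\big\},\qquad A:=C_0\big(\Vert\ub_0\Vert_{\Hb^2}+\delta^{\frac{d+4}{d-4}}\big),
\]
with $C_0$ an absolute constant to be fixed below, equipped with the \emph{weaker} distance $d(\ub,\vb):=\Vert\ub-\vb\Vert_{\Wb(I)}$. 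Completeness of $(X,d)$ follows by a routine argument: a $d$-Cauchy sequence in $X$ converges in $\Wb(I)$, is bounded in the (reflexive) spaces underlying $\Mb(I)$ and $\Lb_t^\infty\Hb_x^2$, and its limit inherits those bounds by weak (weak-$*$) lower semicontinuity of the norms.

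Next I would prove $\Phi(X)\subset X$ and, along the way, the estimates \eqref{P53}. Using that the pairs defining $\Vert\cdot\Vert_{M(I)}$ and $(\infty,2)$ are B-admissible, that the pair in $\Vert\cdot\Vert_{N(I)}$ is conjugate to the S-admissible pair $(2,2d/(d-2))$, the Strichartz estimate \eqref{S319}, and the nonlinear bounds already recorded in Remark \ref{estimatespaces}, namely $\Vert f_k(\ub)\Vert_{N(I)}\lesssim\Vert\ub\Vert_{\Zb(I)}^{8/(d-4)}\Vert\ub\Vert_{\Wb(I)}$ and $\Vert\ub\Vert_{\Zb(I)}\lesssim\Vert\ub\Vert_{\Wb(I)}$, one gets for every $k$
\[
\Vert\Phi_k(\ub)\Vert_{M(I)}+\Vert\Delta\Phi_k(\ub)\Vert_{L_t^\infty L_x^2}\lesssim\Vert\Delta u_{k0}\Vert_{L^2}+\Vert f_k(\ub)\Vert_{N(I)}\lesssim\Vert\Delta u_{k0}\Vert_{L^2}+(2\delta)^{\frac{d+4}{d-4}}.
\]
The $L^2$-part of the $\Hb^2$-norm I would propagate using that $U_k(t)$ is unitary on $L^2$, Hölder's inequality in time on the finite interval $I$, and the Sobolev bound $\Vert f_k(\ub)\Vert_{L_t^2L_x^2}\lesssim\Vert f_k(\ub)\Vert_{N(I)}$. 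Summing in $k$ and choosing $C_0$ large enough yields $\Vert\Phi(\ub)\Vert_{\Mb(I)}+\Vert\Phi(\ub)\Vert_{\Lb_t^\infty\Hb_x^2}\le A$. For the $\Wb(I)$-bound the decisive point is the smallness hypothesis \eqref{P52}: since the Duhamel term contributes only $\lesssim\Vert f_k(\ub)\Vert_{N(I)}\lesssim\delta^{\frac{d+4}{d-4}}$, one obtains $\Vert\Phi_k(\ub)\Vert_{W(I)}\le\delta+C\delta^{\frac{d+4}{d-4}}\le 2\delta$ provided $\delta$ is small (here $\frac{d+4}{d-4}>1$ is used). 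Thus $\Phi(X)\subset X$, and these bounds are precisely \eqref{P53}.

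Then I would establish the contraction. By \eqref{S319} and Sobolev embedding (Remark \ref{estimatespaces}), $\Vert\Phi_k(\ub)-\Phi_k(\vb)\Vert_{W(I)}\lesssim\Vert f_k(\ub)-f_k(\vb)\Vert_{N(I)}$. When $5\le d\le 12$, so that $\frac{8}{d-4}\ge 1$, Lemma \ref{consequences}(iii) together with $\Vert\ub\Vert_{\Wb(I)},\Vert\vb\Vert_{\Wb(I)}\le 2\delta$ (and $\Vert\cdot\Vert_{\Zb(I)}\lesssim\Vert\cdot\Vert_{\Wb(I)}$, using $\frac{12-d}{d-4}+1=\frac{8}{d-4}$) bounds the right-hand side by $C\delta^{8/(d-4)}\Vert\ub-\vb\Vert_{\Wb(I)}$, a contraction on $(X,d)$ once $\delta$ is small. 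When $12<d\le 16$ the derivative of $f_k$ is only Hölder continuous of exponent $\frac{8}{d-4}<1$, so a contraction in a norm carrying a derivative is unavailable; instead I would run the fixed point with a weaker, derivative-free Strichartz metric (at the $L^2$ level, controlled through the B-admissible pair $(2,2d/(d-4))$ in \eqref{S310}), estimating $f_k(\ub)-f_k(\vb)$ by the \emph{linear}-in-difference bound \eqref{fk21} and Hölder, the resulting constant again being a positive power of $\delta$; completeness of $X$ under this metric is checked as before. In either regime Banach's fixed point theorem supplies a unique $\ub\in X$, which is the desired solution. Time-continuity $\ub\in\mathbf{C}_t\Hb^2(I\times\R^d)$ follows from the Duhamel formula, the strong continuity of the propagators $U_k$, and a density argument; uniqueness in the full class $\mathbf{C}_t\Hb^2(I\times\R^d)$ follows from the standard bootstrap showing that any such solution lies in $\Wb(I')\cap\Mb(I')$ on sufficiently short subintervals $I'$, hence falls in $X$ there, followed by a continuation/patching argument.

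The step I expect to be the main obstacle is the contraction for $d>12$: there the nonlinearities have only a Hölder-continuous derivative, which prevents closing the fixed point in any norm that sees a derivative and forces the two-norm device, with the strong $\Mb$- and $\Lb_t^\infty\Hb^2$-bounds decoupled from the weak metric used to contract, exactly as in \cite{Pausader,MXZ}. A minor, purely bookkeeping, point is the $L^2$-component of the $\Hb^2$-norm, which I can control only with the help of $|I|<\infty$ and which therefore does not automatically persist in global-in-time statements.
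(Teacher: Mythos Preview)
Your proposal is correct and follows exactly the approach the paper indicates: the paper omits the proof entirely, stating only that it is ``a straight forward adaptation of the standard contraction map argument presented in \cite[Chapter 4]{cazenave}'', and your outline is precisely such an adaptation, including the two-norm device for $d>12$ as in \cite{Pausader,MXZ}. One minor remark: the $L^2$-component of $\Vert\Phi(\ub)\Vert_{\Lb_t^\infty\Hb_x^2}$ can be handled without the $|I|$-dependent factor by using the B-admissible Strichartz estimate \eqref{S310} together with $\Vert f_k(\ub)\Vert_{L_{t,x}^{2(d+4)/(d+8)}}\lesssim\Vert\ub\Vert_{\Zb(I)}^{8/(d-4)}\Vert\ub\Vert_{\Bb(I)}$, which also yields $\ub\in\Bb(I)$ as asserted in the statement and makes the constant in \eqref{P53} independent of $T$.
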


\begin{corollary}[Small data global existence]\label{smalldata}
	Let $5\leq d\leq 16$. There exists $\delta_0>0$ such that for any $\ub_0\in\Dz$ obeying $\Vert \Delta \ub_0\Vert_{\Lb^2}\leq \delta_0$, the solution $\ub$ for \eqref{SISTB} obtained in Theorem \ref{localexi} extends globally and satisfies
	\begin{equation}\label{S1R}
		\Vert \Delta \ub\Vert_{\Mb(\R)}\lesssim \Vert \Delta\ub_0\Vert_{\Lb^2},
	\end{equation}
\end{corollary}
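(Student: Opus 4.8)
The plan is to derive the corollary from the local theory of Theorem \ref{localexi} together with the a priori bounds recorded in Remark \ref{estimatespaces}, choosing the smallness threshold $\delta_0$ so that it is compatible both with the Strichartz constant and with the bootstrap constant appearing there.

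First I would control the free evolution. The pair defining the norm $M(I)$ is B-admissible, so Strichartz's estimate \eqref{S319} with $h=0$ gives, for each $\kl$,
\[
\Vert U_k(t)u_{k0}\Vert_{M(\R)}\lesssim \Vert \Delta u_{k0}\Vert_{L^2},
\]
and hence, by the Sobolev chain $\Vert\cdot\Vert_{W(I)}\lesssim\Vert\cdot\Vert_{M(I)}$ from Remark \ref{estimatespaces}, there is $C_0=C_0(d)$ with $\Vert U_k(t)u_{k0}\Vert_{W(\R)}\le C_0\Vert \Delta u_{k0}\Vert_{L^2}$ for all $\kl$. Let $\delta>0$ be the constant of Theorem \ref{localexi} and set $\delta_0:=\delta/C_0$ (to be shrunk further below). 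Then whenever $\Vert\Delta\ub_0\Vert_{\Lb^2}\le\delta_0$ one has $\Vert U_k(t)u_{k0}\Vert_{W([0,T])}\le\Vert U_k(t)u_{k0}\Vert_{W(\R)}\le\delta$ for every $T>0$ and every $\kl$, so hypothesis \eqref{P52} of Theorem \ref{localexi} holds on every interval $[0,T]$.

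Next I would run the local theory on arbitrarily long intervals and patch. For each $T>0$, Theorem \ref{localexi} yields a solution $\ub$ on $[0,T]$ with $\ub\in\Mb([0,T])\cap\Bb([0,T])$ and $\Vert\ub\Vert_{\Wb([0,T])}\le 2\delta$; by the uniqueness statement these solutions are restrictions of one another, so they glue to a solution on $[0,\infty)$, and the time-reversal symmetry of \eqref{SISTB} produces a solution on $(-\infty,0]$ as well, so $I=\R$. For the global spacetime bound, fix $T>0$ and put $I_t=[0,t]$, $t\in[0,T]$. Since $\ub\in\Mb([0,T])$, the map $t\mapsto\Vert\ub\Vert_{\Mb(I_t)}$ is finite, continuous and vanishes at $t=0$, and by Remark \ref{estimatespaces} it obeys $\Vert\ub\Vert_{\Mb(I_t)}\le C\big(\Vert\Delta\ub_0\Vert_{\Lb^2}+\Vert\ub\Vert_{\Mb(I_t)}^{\frac{d+4}{d-4}}\big)$. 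Since $\frac{d+4}{d-4}>1$, after possibly shrinking $\delta_0$ a continuity (bootstrap) argument gives $\Vert\ub\Vert_{\Mb(I_t)}\le 2C\Vert\Delta\ub_0\Vert_{\Lb^2}$ for all $t\in[0,T]$, with a constant independent of $T$. Letting $T\to\infty$, and arguing symmetrically on $(-\infty,0]$, we obtain $\Vert\ub\Vert_{\Mb(\R)}\lesssim\Vert\Delta\ub_0\Vert_{\Lb^2}$, which yields \eqref{S1R}.

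The argument is essentially routine, and the only points requiring care are bookkeeping ones: (a) the local theory of Theorem \ref{localexi}, although phrased for $\Hb^2$ data, applies verbatim to $\Dz$ data because every nonlinear estimate used in Remark \ref{estimatespaces} involves only $\Vert\Delta\ub_0\Vert_{\Lb^2}$, so the contraction can be set up in a purely $\dot H^2$-critical space; and (b) one must fix $\delta_0$ small enough to feed simultaneously the smallness hypothesis \eqref{P52} and the bootstrap, and then check that the bound obtained on each $[0,T]$ is uniform in $T$, which is what lets the limit $T\to\infty$ produce a genuinely global estimate. No new analytic difficulty arises beyond what is already contained in Remark \ref{estimatespaces}.
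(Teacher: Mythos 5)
Your proposal is correct and follows essentially the same route as the paper: Strichartz \eqref{S319} with $h=0$ plus the Sobolev chain of Remark \ref{estimatespaces} to verify the smallness hypothesis \eqref{P52} on every interval, then the Duhamel/nonlinear estimate and a continuity argument to close the bootstrap and obtain \eqref{S1R} uniformly in the interval. The only difference is that you spell out some details the paper leaves implicit (the explicit choice $\delta_0=\delta/C_0$, the gluing of local solutions and the $T$-uniformity of the bound before letting $T\to\infty$), which is a matter of exposition rather than of method.
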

		\begin{proof} 
		Note that by using Sobolev's inequality and Strichartz \eqref{S319} we have for $\kl$ and any compact interval $I$ 
	$$
	\Vert U_k(t)u_k(t_0)\Vert_{W(I)}\lesssim \Vert U_k(t)u_k(t_0)\Vert_{M(I)}\lesssim \Vert u_k(t_0) \Vert_{\dot{H}^2}\leq \delta_0,
	$$
	 Therefore, by Proposition \ref{localexi}, the solution exists in any compact $I$ and, hence, is global. To prove \eqref{S1R} notice that for compact interval $I$, 
	 \begin{equation*}
	 	\begin{split}
	 			 	\Vert \Delta \ub\Vert_{\Mb(I)}&\leq \Vert \Delta \ub_0\Vert_{\Lb^2}+C\Vert f_k(\ub)\Vert_{\Nb(I)}\\
	 			 	&\leq \Vert \Delta \ub_0\Vert_{\Lb^2}+C\Vert  \ub\Vert_{\Mb(I)}^{\frac{d+4}{d-4}}\\
	 	\end{split}
	 \end{equation*} 
	 where we have used Strichartz \eqref{S319} and Lemma \ref{consequences}. Since these norms are finite, a continuity argument gives that
	 $$
	 \Vert  \ub\Vert_{\Mb(I)}\lesssim \Vert\Delta \ub_0\Vert_{\Lb^2}
	 $$
	 for all compact interval $I\subset \R$, which proves \eqref{S1R}.
\end{proof}

\begin{proposition}[Global existence criterion]\label{globalexicrit}
	Let $5\leq d\leq 16$. Let $\ub\in \Cb_t\Hb^2_x(I\times \R^d)$, with  $I=[0,T)$, be a solution of \eqref{SISTB} such that $S_I(\ub)=Z<\infty$. Then, there exists $K=K(\Vert \ub_0\Vert_{\Hb^2}, Z)$ such that
	\begin{equation}\label{P57}
		\Vert \ub\Vert_{\Bb(I)}+\Vert \ub\Vert_{\Lb_t^\infty\dot{\Hb}^2(I\times\R^d)}+\Vert\ub\Vert_{\Mb(I)}\leq K,
	\end{equation}
	and $\ub$ can be extend to a solution $\tilde{\ub}\in\Cb_t\Hb_x^2(J\times\R^d)$ of \eqref{SISTB}, for some $J=[0,T')$ with $T'>T$.
\end{proposition}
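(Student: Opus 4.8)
The plan is to run the standard ``finite scattering size $\Rightarrow$ finite Strichartz norms $\Rightarrow$ extendability'' argument, adapted to the system \eqref{SISTB}. Concretely, I would split $I$ into finitely many subintervals on which the $Z$-norm of $\ub$ is small, bootstrap the $\Mb$- and $\Lb_t^\infty\dot{\Hb}^2$-norms on each one via the Strichartz estimate \eqref{S319} and the nonlinear bounds of Lemma \ref{consequences}, chain the interval estimates into a global bound depending only on $\|\ub_0\|_{\Hb^2}$ and $Z$, and finally use these bounds together with the Duhamel formula to show that $\ub(t)$ has a limit in $\Hb^2$ as $t\to T^-$, to which Theorem \ref{localexi} is then applied.

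\textbf{Partition and a priori control.} Since $S_I(\ub)=Z<\infty$, the function $t\mapsto S_{[0,t]}(\ub)$ is absolutely continuous, so for a small constant $\eta_0>0$ (to be fixed, depending only on $d,l$ through the Strichartz constant) I can write $I=[0,T)=\bigcup_{j=0}^{N-1}I_j$ with $I_j=[t_j,t_{j+1})$ consecutive, $t_0=0$, $S_{I_j}(\ub)\le\eta_0$; then $N\le\lceil Z/\eta_0\rceil+1=N(Z)$ and $\|\ub\|_{\Zb(I_j)}\lesssim\eta_0^{\frac{d-4}{2(d+4)}}$. Before estimating I must know that $\|\ub\|_{\Mb}$, $\|\ub\|_{\Bb}$ are finite on compact subintervals of $I$ and tend to $0$ as the interval shrinks; this follows by iterating Theorem \ref{localexi} (using $\ub\in\Cb_t\Hb^2_x$, so $\{\ub(t):t\in[a,b]\}$ is compact in $\Hb^2$ and hence $\|U_k(\cdot-t_0)u_k(t_0)\|_{W([t_0,t_0+\tau])}\to0$ uniformly in $t_0\in[a,b]$ as $\tau\to0$) together with the uniqueness statement in $\Cb_t\Hb^2_x$, which identifies the two solutions. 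This legitimizes the continuity arguments below.

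\textbf{Bootstrap and iteration.} On $[t_j,t]$ with $t\in I_j$, applying \eqref{S319} componentwise to the Duhamel formula with datum $\ub(t_j)$ and using $\|f_k(\ub)\|_{N(J)}\lesssim\|\ub\|_{\Zb(J)}^{\frac{8}{d-4}}\|\ub\|_{\Mb(J)}$ (Lemma \ref{consequences}(iii); cf.\ Remark \ref{estimatespaces}) gives
\begin{equation*}
\|\ub\|_{\Mb([t_j,t])}+\sumk\|\Delta u_k\|_{L_t^\infty L_x^2([t_j,t])}\le C\|\Delta\ub(t_j)\|_{\Lb^2}+C\|\ub\|_{\Zb(I_j)}^{\frac{8}{d-4}}\|\ub\|_{\Mb([t_j,t])}.
\end{equation*}
Choosing $\eta_0$ so that $C\|\ub\|_{\Zb(I_j)}^{\frac{8}{d-4}}\le\frac12$, a continuity argument (valid by the previous step) absorbs the last term, yielding $\|\ub\|_{\Mb(I_j)}+\|\ub\|_{\Lb_t^\infty\dot{\Hb}^2(I_j)}\le4C\|\Delta\ub(t_j)\|_{\Lb^2}$, hence $\|\Delta\ub(t_{j+1})\|_{\Lb^2}\le4C\|\Delta\ub(t_j)\|_{\Lb^2}$. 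Iterating over the $N$ intervals gives $\|\Delta\ub(t_j)\|_{\Lb^2}\le(4C)^N\|\Delta\ub_0\|_{\Lb^2}$ for every $j$, and summing the interval bounds yields the $\Mb(I)$- and $\Lb_t^\infty\dot{\Hb}^2(I)$-parts of \eqref{P57} with $K$ depending only on $N=N(Z)$ and $\|\ub_0\|_{\Hb^2}$. Finally, since the pair defining $\|\cdot\|_{\Bb}$ is B-admissible, $\|\ub\|_{\Bb(I)}$ is controlled in the same fashion, using Strichartz at the $L^2$-level (via \eqref{S310}, resp.\ \eqref{S311}), conservation of mass for the $\|\ub(t_j)\|_{\Lb^2}$ contributions, and the bounds just obtained.

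\textbf{Extension and main difficulty.} If $T=\infty$ the solution is already global and nothing is to be done; assume $T<\infty$. Finiteness of the norms in \eqref{P57} over all of $[0,T)$ gives $\|\ub\|_{\Zb([t,T))}+\|\ub\|_{\Mb([t,T))}\to0$ as $t\to T^-$, so $\|f_k(\ub)\|_{N([t,T))}\to0$; moreover, since $T<\infty$, H\"older in time yields $\|f_k(\ub)\|_{L_t^2L_x^{2d/(d+4)}([t,t'])}\lesssim|t'-t|^{1/2}\|u_k\|_{L_t^\infty L_x^{2d/(d-4)}([0,T))}^{\frac{d+4}{d-4}}\to0$. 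Hence $U_k(-t)u_k(t)=u_{k0}-i\int_0^tU_k(-s)f_k(\ub(s))\,ds$ is Cauchy, as $t\to T^-$, both in $\dot{\Hb}^2$ and in $\Lb^2$, so by unitarity and strong continuity of $U_k$ on $\dot H^2$ (and on $L^2$) the function $\ub(t)$ converges in $\Hb^2$ to some $\ub^*\in\Hb^2$. Applying Theorem \ref{localexi} with datum $\ub^*$ at time $T$ and gluing produces the extension $\tilde{\ub}\in\Cb_t\Hb^2_x([0,T+\tau)\times\R^d)$ for some $\tau>0$. The one point needing genuine care is the a priori finiteness of the space-time norms on compact subintervals in the second step (so that the continuity/bootstrap argument is legitimate); everything else is routine Strichartz bookkeeping with the exponents of \eqref{norms}, the key accounting being to keep every constant dependent only on $\|\ub_0\|_{\Hb^2}$ and $Z$.
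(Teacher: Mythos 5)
Your proposal is correct and is essentially the argument the paper delegates to (the analogue of \cite[Proposition 5.2]{Pausader}, adapted via Lemma \ref{consequences} and Remark \ref{estimatespaces}): partition $I$ into $N(Z)$ subintervals of small scattering size, bootstrap the $\Mb$- and $\Lb_t^\infty\dot\Hb^2$-norms via \eqref{S319}, iterate the kinetic bound across the intervals, control $\Bb(I)$ at the $L^2$-level using \eqref{S310} and mass conservation, and then show $U_k(-t)u_k(t)$ is Cauchy in $\Hb^2$ to apply Theorem \ref{localexi} past $T$. Your explicit verification of the a priori finiteness that legitimizes the continuity argument is a welcome detail that the cited reference also needs implicitly.
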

\begin{proof}
The proof follows by the same strategy as in \cite[Proposition 5.2]{Pausader} with the suitable modification for our problem and applying Lemma \ref{consequences}. So we omit the details.
\end{proof}
As a direct consequence we have the following finite time blow-up criterion. 
\begin{corollary}[Standard finite time blow-up criterion]\label{standblowup} Let $5\leq d\leq 16$ and $\mathbf{u}$ be the maximal-lifespan solution defined on interval $I=[0, T^*)$ with initial data $\mathbf{u}(0)=\mathbf{u}_0$. If $T^*<\infty$, then $S_I(\ub)=\infty$.
	\end{corollary}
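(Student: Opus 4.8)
The plan is to argue by contraposition, using Proposition \ref{globalexicrit} as a black box. Suppose, for contradiction, that $\ub$ is the maximal-lifespan solution on $I=[0,T^{*})$ with $T^{*}<\infty$, but that its scattering size is finite, say $S_{I}(\ub)=Z<\infty$. Since $\ub_0\in\Hb^2$, the quantity $\Vert\ub_0\Vert_{\Hb^2}$ is finite, so all the hypotheses of Proposition \ref{globalexicrit} are met on the interval $I=[0,T^{*})$.

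Applying Proposition \ref{globalexicrit} then yields a constant $K=K(\Vert\ub_0\Vert_{\Hb^2},Z)$ for which the bound \eqref{P57} holds on $I$, and—more importantly—an extension $\tilde{\ub}\in\Cb_t\Hb^2_x(J\times\R^d)$ of $\ub$ solving \eqref{SISTB} on some interval $J=[0,T')$ with $T'>T^{*}$. By the uniqueness part of the local theory (Theorem \ref{localexi}), $\tilde{\ub}$ agrees with $\ub$ on $[0,T^{*})$, so $\tilde{\ub}$ is a genuine proper extension of $\ub$ to an interval strictly containing $I$. This contradicts the fact that $\ub$ is a maximal-lifespan solution. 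Hence the assumption $S_{I}(\ub)<\infty$ is untenable, and we conclude $S_{I}(\ub)=\infty$.

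There is essentially no obstacle here: the entire analytic content—propagation of the $\Mb$, $\Bb$ and $\dot{\Hb}^2$ norms under the a priori control $S_I(\ub)<\infty$, via the Strichartz estimates of Proposition \ref{strichartz} and the nonlinear estimates of Lemma \ref{consequences}—has already been absorbed into Proposition \ref{globalexicrit}. The only point requiring a word of care is making sure the extension $\tilde{\ub}$ is compatible with $\ub$ so that ``maximal-lifespan'' is genuinely violated, which is immediate from the uniqueness statement in Theorem \ref{localexi}. Thus the corollary follows at once.
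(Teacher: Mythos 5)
Your proposal is correct and follows exactly the route the paper intends: the paper states the corollary as ``a direct consequence'' of Proposition \ref{globalexicrit}, and your contradiction argument — assume $S_I(\ub)<\infty$, invoke Proposition \ref{globalexicrit} to extend the solution past $T^*$, contradict maximality via the uniqueness in Theorem \ref{localexi} — is precisely that deduction spelled out.
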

	
The continuous dependence of initial data is a consequence of the following stability result. This type of result was introduced in \cite{tao} for the NLS equation and has a lot of importance in physical models. Stability results in the fourth-order Schrödinger equation was introduced by \cite{Pausader}. Here we will only state the vectorial version of the result given in \cite{Pausader}. Since the proof is analogous only taking into account that we are working in vectorial case, it will be omitted. One can also check \cite[Proposition 4.5]{FH} for a proof in the NLS system regime. The result states the following.

\begin{proposition}\label{stability} Let $5\leq d\leq 16$ and $I\subset\R$ be a compact interval containing $0$. Let $\vb:I\times\R^d\rightarrow\C^l$ be an approximate solution of \eqref{SISTB} in the sense that 
	$$
	i\alpha_k\partial_tv_k+\gamma_{k}\Delta^2v_k+\varepsilon\Delta v_k- f_k(\vb)=e_k,
	$$
	for some $\mathbf{e}=(e_1,...,e_l)\in\Nb(I)$. Assume also that $\Vert \vb\Vert_{\Zb(I)}=Z<\infty$ and $\Vert\vb\Vert_{\Lb_t^\infty\dot{\Hb}_x^2(I\times\R^d)}=L<\infty$. For any $\eta>0$ there exists $\delta_0>0$, $\delta_0=\delta_0(\eta, Z, L)$ , such that if $\Vert \mathbf{e}\Vert_{\Nb(I)}\leq \delta$, and $\ub_0\in\Hb^2$ satisfies
	\begin{equation}\label{P62}
		\Vert \vb(0)-\ub_0\Vert_{\Dz}\leq \eta\hbox{ and } \left\Vert U_k(t)\left(v_k(0)-u_{k0} \right)\right\Vert_{W(I)}\leq \delta\quad\kl
	\end{equation}
	for some $\delta\in(0,\delta_0]$, then there exists $\ub\in \Cb_t\Hb_x^2(I\times\R^d)$ a solution of \eqref{SISTB} such that $\ub(0)=\ub_0$. Moreover, $\ub$ satisfies
	\begin{equation}\label{P63}
		\begin{split}
			&\Vert \ub-\vb\Vert_{\Wb(I)}\leq C\left(\delta + \delta^{\frac{15}{(d-4)^2}}\right),\\
			&\Vert \ub-\vb\Vert_{\Lb_t^q\dot{\Hb}_x^{2,r}(I\times\R^d)}\leq C\left(\eta+\delta+\delta^{\frac{15}{(d-4)^2}}\right),\hbox{ and}\\
			&\Vert \ub\Vert_{\Lb_t^q\dot{\Hb}_x^{2,r}(I\times\R^d)}\leq C,
		\end{split}
	\end{equation}
	for all B-admissible pairs $(q,r)$, where $C=C(\eta, Z, L)$.
\end{proposition}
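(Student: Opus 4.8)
The proof is a routine but technical adaptation of the now‑standard perturbation argument of \cite{Pausader} (see also \cite[Proposition 4.5]{FH}); I only indicate the structure, since the computations mirror those of Remark \ref{estimatespaces} and Corollary \ref{smalldata}. The plan has three steps. \emph{Step 1 (reduction to small scattering size).} Because $\Vert\vb\Vert_{\Zb(I)}=Z<\infty$ and, by Strichartz on each piece, $\Vert\vb\Vert_{\Mb(\cdot)}$ is likewise finite, one can partition $I$ into $J_0=J_0(\eta,Z,L)$ consecutive subintervals $I_j=[t_{j-1},t_j]$ on each of which $\Vert\vb\Vert_{\Zb(I_j)}\le\delta_1$, where $\delta_1=\delta_1(\eta,Z,L)>0$ is a small threshold chosen below. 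The estimates \eqref{P63} will be proved on $I_1$ and then propagated inductively; the crucial bookkeeping is to control, at the start of each new subinterval, the mismatch $\Vert\vb(t_{j-1})-\ub(t_{j-1})\Vert_{\Dz}$ and $\Vert U_k(t-t_{j-1})(v_k(t_{j-1})-u_k(t_{j-1}))\Vert_{W(I_j)}$ so that the one‑interval estimate can be re‑applied.

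\emph{Step 2 (estimate on one subinterval).} Fix $J=I_j$ with $\Vert\vb\Vert_{\Zb(J)}\le\delta_1$ and set $\wb=\ub-\vb$. Subtracting \eqref{SISTB} from the equation satisfied by $\vb$, one sees that $\wb$ obeys, in the Duhamel sense,
\begin{equation*}
w_k(t)=U_k(t-t_{j-1})w_k(t_{j-1})-i\int_{t_{j-1}}^{t}U_k(t-s)\bigl[f_k(\vb+\wb)(s)-f_k(\vb)(s)-\varepsilon\Delta v_k(s)-e_k(s)\bigr]\,ds,\qquad\kl.
\end{equation*}
Applying the Strichartz estimate \eqref{S319}, using the hypothesis $\Vert\mathbf e\Vert_{\Nb(J)}\le\delta$, absorbing the $\varepsilon\Delta v_k$ contribution, and estimating the nonlinear difference by Lemma \ref{consequences}(iii) together with Hölder in time (here the split between $5\le d<12$ and $12\le d\le 16$ enters, as well as the bound $\Vert\vb\Vert_{\Wb(J)}\lesssim\Vert\vb\Vert_{\Mb(J)}$), one arrives at a closed inequality of the schematic form
\begin{equation*}
\Vert\wb\Vert_{\Wb(J)}\lesssim \sum_{k}\Vert U_k(t-t_{j-1})w_k(t_{j-1})\Vert_{W(J)}+\delta+\delta_1^{\frac{8}{d-4}}\Vert\wb\Vert_{\Wb(J)}+\Vert\wb\Vert_{\Wb(J)}^{\frac{d+4}{d-4}}.
\end{equation*}
Choosing $\delta_1$ small to absorb the linear-in-$\wb$ term and invoking the continuity (bootstrap) argument exactly as in Remark \ref{estimatespaces} closes the estimate, giving $\Vert\wb\Vert_{\Wb(J)}\lesssim\delta+\delta^{\frac{15}{(d-4)^2}}$, the fractional power being the Hölder loss produced when $\tfrac{8}{d-4}<1$; feeding this back into the full family of Strichartz norms yields the remaining bounds of \eqref{P63} on $J$, in particular $\Vert\ub\Vert_{\Lb_t^q\dot{\Hb}_x^{2,r}(J\times\R^d)}\le C(\eta,Z,L)$ for all B‑admissible $(q,r)$.

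\emph{Step 3 (iteration and conclusion).} To move from $I_j$ to $I_{j+1}$ one writes the Duhamel formula at $t=t_j$ and applies Strichartz once more to bound $\Vert\vb(t_j)-\ub(t_j)\Vert_{\Dz}$ and $\Vert U_k(t-t_j)(v_k(t_j)-u_k(t_j))\Vert_{W(I_{j+1})}$ by $\Vert\vb(0)-\ub_0\Vert_{\Dz}$, $\sum_k\Vert U_k(t)(v_k(0)-u_{k0})\Vert_{W(I)}$, and the errors accumulated on $I_1,\dots,I_j$, i.e. by a constant $C_j(\delta+\delta^{\frac{15}{(d-4)^2}})+C_j\eta$ with $C_j$ depending only on $j$ and the Step‑2 constants. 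Since $j\le J_0=J_0(\eta,Z,L)$, all these constants are bounded by some $C(\eta,Z,L)$; shrinking $\delta_0=\delta_0(\eta,Z,L)$ so that $C(\eta,Z,L)(\delta_0+\delta_0^{\frac{15}{(d-4)^2}})$ lies below the Step‑2 threshold, the induction runs to completion, and summing the $J_0$ subinterval estimates gives \eqref{P63} (and $\ub(0)=\ub_0$) on all of $I$. \emph{Main obstacle.} The delicate point is precisely this propagation of smallness through the finitely many subintervals: one must verify that the accumulated constant depends only on $\eta,Z,L$ (never on $\delta$), and that the nonlinear difference estimates of Lemma \ref{consequences}(iii) close uniformly on each subinterval — which is where the dimensional window $5\le d\le 16$ and the case distinction at $d=12$ are genuinely used.
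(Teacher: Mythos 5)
Your outline is exactly the standard Tao--Visan/Pausader perturbation scheme (subdivision of $I$ into finitely many pieces on which $\Vert\vb\Vert_{\Zb}$ is small, a single-interval estimate via Duhamel, Strichartz \eqref{S319} and Lemma \ref{consequences}(iii), followed by an induction over the pieces), which is precisely the argument the paper declines to reproduce, instead citing \cite{Pausader} and \cite[Proposition 4.5]{FH}. The only small inaccuracy in your write-up is that the fractional loss $\delta^{\frac{15}{(d-4)^2}}$ does not already appear in the closed bootstrap inequality on the first subinterval (there, absorption gives $\Vert\wb\Vert_{\Wb(I_1)}\lesssim\delta$); it is generated by the iteration in Step 3, where, in the regime $12<d\le 16$, the rough H\"older estimate $\Vert\ub-\vb\Vert^{\frac{8}{d-4}}$ in Lemma \ref{consequences}(iii) feeds a sub-linear power of the previous mismatch into the next one, and the compounded loss over the $J_0(\eta,Z,L)$ subintervals produces the exponent --- but since your Step 3 correctly tracks the accumulation with constants $C_j$ depending only on $j$ and hence only on $\eta,Z,L$, this is an expository slip rather than a gap.
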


Notice that if we let $e_k=0$ for $\kl$, then Proposition \ref{stability} wields the uniform continuity for the solution map $\ub_0\mapsto \ub$. Indeed, in this case we have that $\vb$ is a solution to \eqref{SISTB} with initial data $\vb_0$. Moreover from Strichartz and Remark \ref{estimatespaces}, one can see that
$$
\left\Vert U_k(t)\left(v_k(0)-u_k(0) \right)\right\Vert_{W(I)}\leq C\Vert v_k(0)-u_k(0)\Vert_{\dot{H}^2}, \quad\kl.
$$
Thus, if we choose $\delta=C\eta$, we can make the respectively solutions close enough by controlling $\delta$ when $\eta$ is small.

\section{Ground state solutions}
This section is devoted to prove existence of ground state solution. We start by proving that the functionals $K$ and $P$ are $C^1$ functions. This allow us to see solutions to \eqref{eliptic} as critical points of the energy functional, which give sense to the Definition \ref{weaksolution}. 

\begin{lemma}\label{frechet} The functionals $K$ and $P$ are in $C^1(\Dz;\R)$. In addition, if $\gb\in\dz$, then
	\begin{equation}\label{k'}
		K'(\psib)(\gb)=2\sumk\gamma_k\int\Delta\psi_kg_kdx,
	\end{equation}
	and
	\begin{equation}\label{p'}
		P'(\psib)(\gb)=\sumk\int f_k(\psib)g_kdx.
	\end{equation}
\end{lemma}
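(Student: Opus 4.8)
The plan is to treat $K$ and $P$ separately. The functional $K$ is a bounded positive-definite quadratic form on $\Dz$, so its $C^{1}$ regularity is essentially free: expanding
$$
K(\psib+t\gb)=\sumk\gamma_{k}\Vert\Delta\psi_{k}+t\Delta g_{k}\Vert_{L^{2}}^{2}=K(\psib)+2t\sumk\gamma_{k}\RE\!\int\Delta\psi_{k}\,\overline{\Delta g_{k}}\,dx+t^{2}K(\gb),
$$
we read off that $K$ is Gâteaux differentiable with $K'(\psib)(\gb)=2\sumk\gamma_{k}\RE\int\Delta\psi_{k}\,\overline{\Delta g_{k}}\,dx$; after integrating by parts this is $2\sumk\gamma_{k}\RE\int\Delta^{2}\psi_{k}\,g_{k}\,dx$, understood as the $\dz\times\dz$ duality pairing, which is \eqref{k'} (on real-valued configurations the $\RE$ and the conjugations are vacuous). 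Since $\gb\mapsto K'(\psib)(\gb)$ is a bounded linear functional and $\psib\mapsto K'(\psib)$ is itself bounded and linear from $\Dz$ into $(\Dz)^{\ast}$, $K'$ is continuous; a continuous Gâteaux derivative is automatically the Fréchet derivative, so $K\in C^{1}(\Dz;\R)$.

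For $P(\psib)=\RE\int F(\psib)\,dx$ the first step is well-posedness of the functional: by \eqref{fk23}, $|\RE F(\psib)|\lesssim\sumj|\psi_{j}|^{\frac{2d}{d-4}}$, and since $\dz(\R^{d})$ embeds into $L^{\frac{2d}{d-4}}(\R^{d})$ (Lemma \ref{embedding}), one gets $|P(\psib)|\lesssim\sumj\Vert\psi_{j}\Vert_{\dz}^{\frac{2d}{d-4}}<\infty$. Next I would compute the Gâteaux derivative via the fundamental theorem of calculus, writing
$$
\frac{P(\psib+t\gb)-P(\psib)}{t}=\RE\sumk\int_{\R^{d}}\int_{0}^{1}f_{k}(\psib+st\gb)\,\overline{g_{k}}\,ds\,dx,
$$
where \eqref{refk} is used to identify $\tfrac{d}{ds}\RE F(\psib+st\gb)=\RE\sumk f_{k}(\psib+st\gb)\,\overline{t\,g_{k}}$. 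To pass to the limit $t\to0$ I would dominate the integrand: by Lemma \ref{consequences}, $|f_{k}(\zb)|\lesssim\summ|z_{m}|^{\frac{d+4}{d-4}}$, so for $|t|\le1$ the inner double integral is bounded by $\sum_{k,m}(|\psi_{m}|+|g_{m}|)^{\frac{d+4}{d-4}}|g_{k}|$, which lies in $L^{1}(\R^{d})$ by Hölder with exponents $\frac{2d}{d+4}$ and $\frac{2d}{d-4}$ (note $\frac{d+4}{d-4}\cdot\frac{2d}{d+4}=\frac{2d}{d-4}$ and $|\psi_{m}|+|g_{m}|\in L^{\frac{2d}{d-4}}$ by Sobolev). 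Dominated convergence then gives $P'(\psib)(\gb)=\RE\sumk\int f_{k}(\psib)\,\overline{g_{k}}\,dx$, which is \eqref{p'}.

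It remains to prove that $\psib\mapsto P'(\psib)$ is continuous from $\Dz$ into $(\Dz)^{\ast}$. Given $\psib_{n}\to\psib$ in $\Dz$, Hölder and Sobolev give, uniformly for $\Vert\gb\Vert_{\Dz}\le1$,
$$
\bigl|P'(\psib_{n})(\gb)-P'(\psib)(\gb)\bigr|\le\sumk\Vert f_{k}(\psib_{n})-f_{k}(\psib)\Vert_{L^{\frac{2d}{d+4}}}\Vert g_{k}\Vert_{L^{\frac{2d}{d-4}}}\lesssim\sumk\Vert f_{k}(\psib_{n})-f_{k}(\psib)\Vert_{L^{\frac{2d}{d+4}}},
$$
and I would estimate the last norm through the Lipschitz-type bound \eqref{fk21}: applying Hölder with exponents $\frac{d}{4}$ and $\frac{2d}{d-4}$ (since $\frac{4}{d}+\frac{d-4}{2d}=\frac{d+4}{2d}$ and $\frac{8}{d-4}\cdot\frac{d}{4}=\frac{2d}{d-4}$) yields
$$
\Vert f_{k}(\psib_{n})-f_{k}(\psib)\Vert_{L^{\frac{2d}{d+4}}}\lesssim\sum_{m,j}\Bigl(\Vert\psi_{n,j}\Vert_{\dz}^{\frac{8}{d-4}}+\Vert\psi_{j}\Vert_{\dz}^{\frac{8}{d-4}}\Bigr)\Vert\psi_{n,m}-\psi_{m}\Vert_{\dz},
$$
which tends to $0$ because the norms $\Vert\psi_{n,j}\Vert_{\dz}$ stay bounded while $\Vert\psi_{n,m}-\psi_{m}\Vert_{\dz}\to0$. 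Taking the supremum over $\gb$ gives $\Vert P'(\psib_{n})-P'(\psib)\Vert_{(\Dz)^{\ast}}\to0$, hence $P\in C^{1}(\Dz;\R)$; combined with the computation for $K$ this proves the lemma. The only genuinely non-routine point is the analysis of $P$: one must choose the Hölder exponents forced by the energy-critical scaling — $L^{2d/(d-4)}$ on the $\dot H^{2}$ side via Sobolev, and $L^{d/4}$ (resp.\ $L^{2d/(d+4)}$) for the factors coming from the nonlinearity — so that every factor is absorbed into a power of the $\dot H^{2}$-norm, and one must produce a concrete $L^{1}$ majorant to legitimately exchange the limit with the integral in the difference quotient. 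Everything else reduces to the pointwise bounds already recorded in Lemmas \ref{consequences} and \ref{lemma22}.
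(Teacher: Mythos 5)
Your proof is correct and follows essentially the same route as the paper: an explicit computation of the derivative of the quadratic form $K$, then for $P$ the Gâteaux derivative via the fundamental theorem of calculus and dominated convergence (with the same $L^{1}$ majorant forced by the critical Sobolev exponents), and finally an upgrade to Fréchet differentiability by verifying that $P'$ is continuous. The one place you deviate is the last step: the paper verifies continuity of $P'$ by invoking Lemma \ref{fkcontinua}, which operates in the $\Hb^{2}\to\Hb^{-4}$ duality, whereas you give a direct Hölder/Sobolev estimate $\Vert f_k(\psib_n)-f_k(\psib)\Vert_{L^{2d/(d+4)}}\to 0$ built from the Lipschitz-type bound \eqref{fk21}. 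Your version is in fact a bit cleaner here, since it stays entirely within the $\Dz/(\Dz)^{\ast}$ pairing natural to the lemma rather than passing through inhomogeneous Sobolev spaces, and it avoids the density argument the paper relegates to the following remark. (Incidentally, you correctly compute $K'(\psib)(\gb)=2\sumk\gamma_k\RE\int\Delta\psi_k\,\overline{\Delta g_k}\,dx$; the displayed formula \eqref{k'} in the paper, which shows only a single Laplacian, is a typo, as can be seen by comparing with Definition \ref{weaksolution}.)
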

\begin{proof} The proof of \eqref{k'} is similar to those presented in \cite[Exemple 1.3(d)]{AP}. One may also check \cite[Examples 1.3.13]{BS}. 
	
	To prove \eqref{p'}, we start showing that $P\in C^1(\dz,\R)$. Indeed, let $(\ub_m)\subset\dz$ be a sequence such that $\ub_m\rightarrow\ub$ in $\dz$ as $m\rightarrow \infty$. Therefore, Lemma \ref{lemma22}, Hölder's inequality and Sobolev embedding yields
	\begin{equation}\label{n319}
		\begin{split}
			|P(\ub_m)-P(\ub)|&\leq \int |F(\ub_m)-F(\ub)|dz\\
			&\leq C\sumk \sumj \int\left(|u_{jm}|^{\frac{d+4}{d-4}}+|u_j|^{\frac{d+4}{d-4}}\right)|u_{km}-u_k|dx\\
			&\leq C\sumk \sumj \left(\Vert u_{jm}\Vert_{L^{\frac{2d}{d-4}}}^{\frac{d+4}{d-4}}+\Vert u_{j}\Vert_{L^{\frac{2d}{d-4}}}^{\frac{d+4}{d-4}} \right)\Vert u_{km}-u_k\Vert_{L^{\frac{2d}{d-4}}}\\
			&\leq C\sumk \sumj \left(\Vert u_{jm}\Vert_{H^2}^{\frac{d+4}{d-4}}+\Vert u_{j}\Vert_{H^2}^{\frac{d+4}{d-4}} \right)\Vert u_{km}-u_k\Vert_{H^2}.\\
		\end{split}
	\end{equation}
	Since the right-hand side goes to zero as $m\rightarrow\infty$, the claim follows. Left us to prove that $P$ is Gatêaux differentiable and its derivate is given by \eqref{p'}. Indeed, let $\psib\in\dz$ and $g\in C_0^\infty(\R^d)^l$. If $0<t\leq 1$, by Lemma \ref{consequences} and Lemma \ref{lemma22} we have
	\begin{equation*}
		\begin{split}
			\frac{1}{t}\left[F(\psib+t\gb)\right.&-\left.F(\psib)-t\sumk f_k(\psib)g_k\right]\leq\frac{1}{t}\left[|F(\psib+t\gb)-F(\psib)|+t\sumk|f_k(\psib)||g_k|\right]\\
			&\leq \frac{C}{t}\left[\summ\sumj \left(|\psi_j+tg_j|^{\frac{d+4}{d-4}}+|\psi_j|^{\frac{d+4}{d-4}}\right)t|g_m|+t\sumk\sumj |\psi_j|^{\frac{d+4}{d-4}}|g_k|\right]\\
			&\leq C\left[ \summ\sumj \left( |\psi_j|^{\frac{d+4}{d-4}}+t^{\frac{d+4}{d-4}}|g_j|^{\frac{d+4}{d-4}} \right)|g_m|+\sumk\sumj|\psi_j|^{\frac{d+4}{d-4}}|g_k| \right]\\
			&\leq C\left[ \summ\sumj |g_j|^{\frac{d+4}{d-4}}|g_m|+\sumk\sumj |\psi_j|^{\frac{d+4}{d-4}}|g_k| \right]
		\end{split}
	\end{equation*}
	where we used that $t\leq 1$ in the last inequality. Also, a similar argument used in \eqref{n319} and the Sobolev embedding $H^2\hookrightarrow L^{\frac{2d}{d-4}}$ shows that the right-hand side is a $L^1$ function. On the other hand, the chain rule and \eqref{fk25} gives us 
	$$
	\lim_{t\rightarrow 0}\frac{1}{t}\left[F(\psib+t\gb)-F(\psib)-t\sumk f_k(\psib)g_k\right]=0,
	$$
	which, together with Dominated Convergence, implies that
	\[
	\begin{split}
		P'(\psib)(\gb)-\int\sumk f_k(\psib)g_kdx &= \lim_{t\rightarrow 0} \frac{1}{t}\left[ P(\psib+t\gb)-P(\psib)-t\int\sumk f_k(\psib)g_kdx \right]\\
		&=\lim_{t\rightarrow 0}\frac{1}{t}\int \left[F(\psib+t\gb)-F(\psib)-t\sumk f_k(\psib)g_k\right]\\
		&= \int \lim_{t\rightarrow 0}\frac{1}{t}\left[F(\psib+t\gb)-F(\psib)-t\sumk f_k(\psib)g_k\right]\\
		&=0.
	\end{split}
	\]
	Therefore, $P$ is Gatêaux differentiable. To finish the proof, notice that Lemma \ref{fkcontinua} implies that $P'$ is continuous and, consequently, its Gatêaux and Fréchet derivatives coincide. 
	\end{proof}
	
	\begin{remark}
		Note the one may take $\gb\in\dz$ in the previous proof by using a density argument.
	\end{remark}
	
Once the definition of ground state is well established, we can now turn our attention to prove the existence. We start by observing that given $\psib$ a solution of \eqref{eliptic}, if we take $g_k=\psi_k$ in \eqref{weaksolution}, summing over $k$ and using Lemma \ref{lemma22},  we have
\begin{equation}\label{Ppositive}
	K(\psib)=\sumk \int f_k(\psib)\psi_kdx=\left(\frac{2d}{d-4}\right)P(\psib),
\end{equation}
which gives us the following relations between the energy functional $E$, the kinetic energy $K$ and the potential energy $P$,
 \begin{equation}\label{pohozaevident}
 P(\psib)=\left(\frac{d-4}{4}\right)E(\psib)\quad\hbox{and} \quad K(\psib)=\left(\frac{d}{2}\right)E(\psib).
 \end{equation}
Moreover, one can also notice that \eqref{Ppositive} implies that if $\psib\in\mathcal{C}$ then $P(\psib)>0$, that is, the set of all non trivial solutions lies in  $\mathcal{P}:=\{\psib\in\mathcal{C};\, P(\psib)>0\}$. Moreover, by using Lemma \ref{lemma22} and Sobolev's inequality we have
\begin{equation}\label{sobolevtype}
	P(\psib)\leq  CK(\psib)^{\frac{d}{d-4}}.
\end{equation}
Thus, if we set the functional $J(\psib)$ by
$$
J(\psib):=\frac{K(\psib)^{\frac{d}{d-4}}}{P(\psib)},
$$
for all $\psib\in\mathcal{P}$, by \eqref{Ppositive} and \eqref{pohozaevident}, 
\begin{equation}\label{SJ}
	J(\psib)=\left(\frac{d-4}{2d}\right)\left(\frac{d}{2}\right)^{\frac{4}{d-4}}E(\psib)^{\frac{4}{d-4}}:=C_dE(\psib)^{\frac{4}{d-4}}.
\end{equation}
Hence, a non-trivial solution of \eqref{eliptic} is a ground state if, and only if, minimizes $J$. Therefore, in order to prove existence of ground state solution, we have to find the best constant in \eqref{sobolevtype}, that is, the best $C>0$ such that
$$
\frac{1}{C}\leq J(\psib),\quad  \psib\in\Pc.
$$
The optimal constant to place is given by
\begin{equation}\label{Min}
	C^{-1}_{\rm opt}:=\inf\{J(\psib); \psib\in\Pc\}.
\end{equation}
In particular, by \eqref{SJ} and the definition of energy \eqref{energy},  a non-trivial solution of \eqref{eliptic} ia a ground state if and only if  has least energy among all non-trivial solution of \eqref{eliptic}. 
\begin{remark}\label{minimization} By using the homogeneity of the functionals $K$ and $P$ one may notice the following:
	\begin{itemize}
		\item[(i)] A straightforward computation shows that $K(\bbc\ub\bbc)\leq K(\ub)$ and together with \ref{H6} yields that $J(\bbc\ub\bbc)\leq J(\ub)$. Thus, there is no loss of generality in assuming that minimizing sequences are always non-negative.
		
		\item[(ii)] 	$C_{\rm opt}^{-1}=I^{\frac{d}{d-4}}.$ Therfore, to prove that the infimum \eqref{Min} is attained we can consider the normalized problem given by
		\begin{equation}\label{MinNorm}
			I:=\inf\{ K(\ub);\, \ub\in\Pc,\, P(\ub)=1\}
		\end{equation}
		and solve \eqref{MinNorm} instead of \eqref{Min}.
		
		\item[(iii)] The functionals $K$ and $P$ are invariant under the transformation
		\begin{equation}\label{obs34ii}
			\ub\mapsto\ub^{R,y}=R^{-\frac{d-4}{2}}\ub\left( R^{-1}(x-y)\right),\quad R>0,y\in\R^d.
		\end{equation}

	\end{itemize}	

\end{remark}
Before solve the minimization problem, we set a minimizing sequence for \eqref{Min} to be a sequence $\ub_m$ in $\mathcal{P}$ such that $J(\ub_m)\rightarrow	C_{\rm opt}^{-1}$. In the same way, a minimizing sequence for \eqref{MinNorm} is a sequence $\ub_m$ in $\Pc$ such that $P(\ub_m)=1$ for all $m$. 
\subsection{Concentration-compactness}
In order to solve \eqref{MinNorm}, we will apply the concentration compactness method introduced in \cite{lions2}. This is a well known method, so we will present here just a outline of the proof. For a detailed version in the NLS regime, one may check \cite[Theorem 3.11]{pastor3}. See also \cite[Theorem 3.12]{HP} and \cite[Theorem 3.8]{FH} . The result reads as follows
\begin{theorem}\label{lionsI1}
	Suppose that $(\ub_m)$ is a minimizing sequence for \eqref{MinNorm}. Then, up to translation and dilation $(\ub_m)$ is relatively compact in $\mathcal{P}$, that is, there exist a subsequence $(\ub_{m_j})$ and sequences $(R_j)\subset\mathbb{R}$, $(y_j)\subset \mathbb{R}^d$ such that  $\vb_j$ given by
	$$
	\vb_j:=R_j^{-\frac{d-4}{2}}\ub_{m_j}(R_j^{-1}(x-y_j)),
	$$
	strongly converges in $\mathcal{N}$ to some $\vb$, which minimizes \eqref{MinNorm}.
\end{theorem}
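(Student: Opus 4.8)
The plan is to solve the normalized minimization problem \eqref{MinNorm} by P.-L. Lions' concentration--compactness method \cite{lions2}, following the strategy of \cite[Theorem 3.11]{pastor3}, \cite[Theorem 3.12]{HP} and \cite[Theorem 3.8]{FH} and tracking the modifications forced by the biharmonic operator and by the coupled, non-separable potential $F$; I only sketch the steps. Let $(\ub_m)\subset\Pc$ be a minimizing sequence for \eqref{MinNorm}, so $P(\ub_m)=1$ and $K(\ub_m)\to I$. By Remark \ref{minimization}(i) we may assume every component of $\ub_m$ is real and nonnegative, whence $P(\ub_m)=\int F(\ub_m)\,dx$ by \ref{H7} and every quantity below is real. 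From \eqref{sobolevtype} one gets $I>0$, and since $K(\ub_m)$ is bounded, $(\ub_m)$ is bounded in $\Dz$.

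First I would normalize the scale. Put $\rho_m:=K(\ub_m)^{-1}\sumk\gamma_k|\Delta u_{k,m}|^2\,dx$, a probability measure on $\R^d$, and let $\mathcal{Q}_m(t):=\sup_{y\in\R^d}\rho_m(B(y,t))$ be its concentration function; $t\mapsto\mathcal{Q}_m(t)$ is continuous, nondecreasing, and runs from $0$ to $1$, so there is $R_m>0$ with $\mathcal{Q}_m(R_m)=\tfrac12$. Replacing $\ub_m$ by $\ub_m^{R_m^{-1},0}$ (notation \eqref{obs34ii}) --- which, by Remark \ref{minimization}(iii), changes neither $K$, nor $P$, nor the minimizing property --- we may assume $\mathcal{Q}_m(1)=\tfrac12$ for all large $m$. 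Applying Lions' lemma to $(\rho_m)$, along a subsequence exactly one of vanishing, dichotomy and compactness holds. Vanishing is excluded at once, since the normalization forces $\lim_{t\to\infty}\lim_m\mathcal{Q}_m(t)\ge\lim_m\mathcal{Q}_m(1)=\tfrac12>0$.

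Next I would rule out dichotomy, say with parameter $\lambda\in[\tfrac12,1)$. One splits $\ub_m$ into two pieces $\ub_m^{(1)},\ub_m^{(2)}$ supported in regions whose mutual distance tends to $\infty$, with $K(\ub_m^{(1)})\to\lambda I$, $K(\ub_m^{(2)})\to(1-\lambda)I$ and a remainder tending to $0$ in $\Dz$; and, using the growth bound $|F(\zb)|\lesssim\sum_j|z_j|^{2d/(d-4)}$ of Lemma \ref{lemma22}(i) (with Lemma \ref{gradsum} and the interpolation estimates of Section \ref{secnot} to absorb the $\Dz$-errors created by the spatial localization), one obtains $P(\ub_m)=P(\ub_m^{(1)})+P(\ub_m^{(2)})+o(1)$, say $P(\ub_m^{(j)})\to\mu_j\ge0$ with $\mu_1+\mu_2=1$. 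Since the definition of $I$ together with the homogeneities of $K$ and $P$ gives $K(\vb)\ge I\,P(\vb)^{(d-4)/d}$ for every $\vb\in\Pc$, the limits yield $\lambda I\ge I\mu_1^{(d-4)/d}$ and $(1-\lambda)I\ge I\mu_2^{(d-4)/d}$; adding and using the strict subadditivity of $s\mapsto s^{(d-4)/d}$,
\[
1=\lambda+(1-\lambda)\ge\mu_1^{(d-4)/d}+\mu_2^{(d-4)/d}\ge(\mu_1+\mu_2)^{(d-4)/d}=1,
\]
forcing $\mu_1=0$ or $\mu_2=0$; but then $\mu_2=1$ (resp. $\mu_1=1$) gives $(1-\lambda)I\ge I$ (resp. $\lambda I\ge I$), contradicting $\lambda\in[\tfrac12,1)$. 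Hence dichotomy cannot occur.

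Therefore compactness holds: there are $y_m\in\R^d$ such that $(\ub_m(\cdot+y_m))$ is tight in $\Dz$. Being bounded in $\Dz$, a subsequence converges weakly to some $\vb\in\Dz$; tightness (via Rellich) forbids loss of $\Lb^{2d/(d-4)}$-mass to spatial infinity, while the normalization $\mathcal{Q}_m(1)=\tfrac12$ together with Lions' concentration--compactness inequality forbids concentration (scale $\to0$) and spreading (scale $\to\infty$), so $\ub_m(\cdot+y_m)\to\vb$ strongly in $\Lb^{2d/(d-4)}$. Consequently $P(\vb)=\lim P(\ub_m)=1$ by Lemma \ref{lemma22}(i), in particular $\vb\neq0$, so $K(\vb)\ge I$ by definition of $I$; weak lower semicontinuity of $K$ gives $K(\vb)\le\liminf K(\ub_m)=I$, hence $K(\vb)=I$ and $\vb$ solves \eqref{MinNorm}. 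Finally $K(\ub_m(\cdot+y_m))\to K(\vb)$ together with weak convergence in the Hilbert space $\Dz$ upgrades to strong convergence in $\Dz$; unwinding the dilations and translations produces the sequences $(R_j),(y_j)$ of the statement. The main obstacle is the dichotomy step: unlike in the second-order case, smooth spatial cut-offs do not commute with $\Delta^2$, and --- since elements of $\Dz$ need not lie in $L^2$ --- one cannot use naive $\dot H^1$-type commutator bounds; instead one must show, with the interpolation inequalities of Section \ref{secnot} and the coupled growth estimates of Lemmas \ref{consequences}, \ref{gradsum} and \ref{lemma22}, that the localization errors in both $K$ and $\int F(\ub_m)$ are controlled by the ($\to0$) $\Dz$-mass carried by the transition region, which is the only place where the full structure of the nonlinearities is genuinely used.
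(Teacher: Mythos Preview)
Your sketch is correct and follows the same concentration--compactness paradigm as the paper, but you make two different implementation choices. First, you run Lions' first lemma on the \emph{kinetic} density $\rho_m=K(\ub_m)^{-1}\sumk\gamma_k|\Delta u_{k,m}|^2\,dx$ and normalize so that $\mathcal{Q}_m(1)=\tfrac12$; the paper instead runs it on the \emph{potential} density $\nu_m=F(\ub_m)\,dx$ and normalizes so that $\sup_y\nu_m(B(y,1))=\tfrac12$ (see \eqref{341}). Both choices work, but the paper's is slightly more direct at the endgame: after compactness and the second concentration--compactness lemma (Lemma~\ref{lema36}) produce the Dirac decomposition of the weak-$\ast$ limit $\nu$, the strict concavity argument in \eqref{356} leaves only the possibility $\nu=\nu_{j_0}\delta_{x_{j_0}}$ with $\nu_{j_0}=1$, and the normalization on $\nu_m$ kills this immediately; with your kinetic normalization you must first transfer the bound through $\nu_j\le I^{-d/(d-4)}\mu_j^{d/(d-4)}$. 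Second, for dichotomy you argue by the subadditivity $\mu_1^{(d-4)/d}+\mu_2^{(d-4)/d}\ge(\mu_1+\mu_2)^{(d-4)/d}$, whereas the paper appeals to the localized Sobolev inequality of Corollary~\ref{localsob}; these are essentially equivalent, since in either case the whole burden is the biharmonic cut-off error, which is precisely what Lemma~\ref{lemma38} (the logarithmic cut-off) handles.

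Two small imprecisions worth fixing. Rellich does \emph{not} control the critical $\Lb^{2d/(d-4)}$-mass outside a large ball --- the embedding $\dot H^2\hookrightarrow L^{2d/(d-4)}$ is not locally compact --- so ``tightness (via Rellich)'' should be replaced by the localized Sobolev inequality \eqref{338}, which does give $\nu_m(\R^d\setminus B_R)\to0$ from tightness of $\mu_m$. And the decoupling $P(\ub_m)=P(\ub_m^{(1)})+P(\ub_m^{(2)})+o(1)$ relies on the pointwise growth bound for $F$ in Lemma~\ref{lemma22}(i), not on Lemma~\ref{gradsum}, which concerns $\nabla f_k$ rather than $F$.
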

\begin{proof}[Outline of the proof]
	
The proof proceeds in several steps. First, we will need a result that is an adapted version of Lemma 1.7.4 in \cite{cazenave} concerning the concentration function given by
\begin{equation}\label{concfunc}
	Q_m(R):=\sup_{y\in\mathbb{R}^d}\int_{B(y,R)}F(\ub_m)dx,\quad R>0,
\end{equation}
so we will omit the details. The results reads as follows.
\begin{lemma}\label{lema310}
	Let $(\ub_m)\subset{\mathbf{L}}^{\frac{2d}{d-4}}(\mathbb{R}^d)$ be such that $\ub_m\geq \mathbf{0}$ and $\int F(\ub_m)dx=1$, for any $m\in\mathbb{N}$. Let $Q_m(R)$ be the concentration function of $F(\ub_m)$ defined by \eqref{concfunc}. Then, for each $m\in\N$, there is $y=y(m,R)$ such that
	$$
	Q_m(R)=\int_{B(y,R)}F(\ub_m)dx. 
	$$
\end{lemma} 
The next step is to built a suitable sequence of probability measures. We do this by considering $(\ub_m)$ to be any minimizing sequence of \eqref{MinNorm}, that is, 
\begin{equation}\label{339}
	\lim_{m\rightarrow\infty}K(\ub_m)=I\quad\hbox{and}\quad P(\ub_m)=\int F(\ub_m)dx=1.			
\end{equation} 
	Thus, by using the invariance \eqref{obs34ii} and Lemma \eqref{lema310}	we get sequences $(R_j)\subset\R$ and $(y_j)\subset\R^d$ such that
	\begin{equation*}
		\vb_j:=R_j^{-\frac{d-4}{2}}\ub_{m_j}(R_j^{-1}(x-y_j)),
	\end{equation*}
	satisfies
	\begin{equation}\label{341}
		\sup_{y\in\mathbb{R}^d}\int_{B(y,1)}\varphi(\vb_m)dx=\int_{B(0,1)}\varphi(\vb_m)dx=\frac{1}{2}.
	\end{equation}
	Once again we use the invariance \eqref{obs34ii} and conclude that $(\vb_m)$ is also a minimizing sequence for \eqref{MinNorm}. In particular, $(\vb_m)$ is uniformly bounded in $\mathcal{P}$. Hence, up to a subsequence, there exist $\vb\in\Dz$ such that
	\begin{equation}\label{345}
		(\vb_m)\rightharpoonup \vb\quad \mbox{in }  \Dz.
	\end{equation} 
	Next, we set the sequences of measures $(\mu_m)$ and $(\nu_m)$ by
	$$
	\mu_m:=\sum_{k=1}^l\gamma_k|\Delta v_{km}|^2dx,\quad\hbox{and}\quad \nu_m:=F(\vb_m) dx.
	$$
	Since $(\vb_m)$ is a minimizing sequence of \eqref{MinNorm}, then $(\nu_m)$ is a probability sequence of measures. This allow us to use the following auxiliary result, which is inspired in \cite[Lemma I.1]{lions1}.
\begin{lemma}[Concentration-compactness lemma I] \label{lema35}	
	Suppose that $(\nu_m)$ is a sequence in $\mathcal{M}_+^1(\mathbb{R}^d)$. Then, there is a subsequence, still denoted by $(\nu_m)$, such that one of the following conditions holds:
	\begin{itemize}
		\item[(i)](Vanishing) For all $R>0$ it holds
		$$
		\lim_{m\rightarrow\infty}\left(\sup_{x\in\mathbb{R}^d}\nu_m(B(x,R))\right)=0.
		$$
		
		\item[(ii)](Dichotomy) There is a number $\lambda\in(0,1)$ such that for all $\epsilon>0$ there exists $R>0$ and a sequence $(x_m)$ with the following property: given $R'>R$
		$$
		\nu_m(B(x_m,R))\geq \lambda-\epsilon,
		$$
		$$
		\nu_m(\mathbb{R}^d\backslash B(x_m,R'))\geq 1-\lambda-\epsilon,
		$$
		for $m$ sufficiently large.
		
		\item[(iii)](Compactness) There exists a sequence $(x_m)\subset\mathbb{R}^d$ such that for each $\epsilon>0$ there is a radius $R>0$ with the property
		$$
		\nu_m(B(x_m,R))\geq 1-\epsilon,
		$$
		for all $m$.
	\end{itemize}
\end{lemma}
\begin{proof}
	See \cite[Lemma 23]{FM}.
\end{proof}
As a consequence, $\nu_m$ must satisfy one of the three above conditions. The next step is show that $\nu_m$ cannot satisfy vanishing and dichotomy and, thus, implying the compactness of sequence. The vanishing is ruled out immediately by \eqref{341}. To eliminate the possibility of dichotomy, we need to construct a localized Sobolev-type inequality. We do this by adapting Lemma 8 in \cite{FM} to our settings. The result is the following.
\begin{lemma}\label{lemma38}
	For every $\delta>0$ there is a constant $C(\delta)>0$ satisfying the following: if $0<r<R$ with $r/R\leq C(\delta)$ and $x\in\R^d$, then there is a cutt-off function $\chi_R^r\in H^{2,\infty}(\R^d)$ such that $\chi_R^r=1$ on $B(x,r)$, $\chi_R^r=0$ outside $B(x,R)$,
	\begin{equation}\label{334}
		K(\chir \ub)\leq \sumk \gamma_k \int_{B(x,R)}|\Delta u_k|^2dy+\delta K(\ub),	
	\end{equation}
	and
	\begin{equation}\label{335}
		K((1-\chir)\ub)\leq\sumk\int_{\R^d\backslash B(x,r)}|\Delta u_k|^2dy+\delta K(\ub),
	\end{equation}
	for any $\ub\in\Dz$.
\end{lemma}
\begin{proof}
	The proof follows the ideas in \cite[Lemma 8]{FM}.There is no loss of generality in assuming that $x=0$, so we will assume that. Let $\chir$ be given by
	\begin{equation}
		\chir(y)=\begin{cases}
			1, &|y|\leq r,\\
			\displaystyle\frac{\log(|y|/R)}{\log(r/R)}, & r\leq |y|\leq R,\\
			0, &|y|\geq R.\\
		\end{cases}
	\end{equation}
	A straightforward computation yields that $\chir\in H^{2,\infty}$ and
	\begin{equation}\label{336}
		\int_{B(0,R)}|\nabla \chir|^d dy=\frac{\omega_d}{(\log(R/r))^{d-1}},\quad \int_{B(0,R)}|\Delta\chir|^{\frac{d}{2}}dy=\frac{(d-2)^{\frac{d}{2}}\omega_d}{\log(R/r)^{\frac{d-2}{2}}},
	\end{equation}
	where $\omega_d$ is the measure of the unit sphere in $\R^d$. Now, by using \eqref{336}, Sobolev, Young and Hölder's inequalities, for any $\epsilon>0$ and $\kl$,
	\begin{equation*}
		\begin{split}
			\int_{B(0,R)}|\Delta[\chir u_k]|^2 dy 	&\leq
			(1+\epsilon)\int_{B(0,R)}|\Delta u_k|^2dy+\left(1+\frac{1}{\epsilon}\right)\frac{C(d-2)^{2}\omega_d^{\frac{4}{d}}}{\log(R/r)^{\frac{2(d-2)}{d}}}\int |\Delta u_k|^2 dy\\
			&\quad +\left(1+\epsilon+\frac{1}{\epsilon}\right)\frac{C\omega_d^{\frac{2}{d}}}{(\log(R/r))^{\frac{2(d-1)}{d}}}\int |\Delta u_k|^2dy.
		\end{split}
	\end{equation*}
	Multiplying by $\gamma_k$ and summing over $k$, we arrive at
	\begin{equation*}
		\begin{split}
			K(\chir\ub)&\leq \sumk\gamma_k\int_{B(0,R)}|\Delta u_k|^2dy\\
			&\quad+\left[\epsilon+\left(1+\frac{1}{\epsilon}\right)\frac{\zeta_1^2}{\log(R/r)^{\frac{2(d-2)}{d}}}+\left(1+\epsilon+\frac{1}{\epsilon}\right)\frac{\zeta_2^2}{(\log(R/r))^{\frac{2(d-1)}{d}}}\right]K(\ub),
		\end{split}
	\end{equation*}
	
	where $\zeta_1=\sqrt{C}(d-2)\omega_d^{\frac{2}{d}}$ and $\zeta_2=\sqrt{C}\omega_d^{\frac{1}{d}}$. So, for any $\delta>0$, one may find a $C(\delta)$ such that if $r/R\leq C(\delta)$ than
	$$
	\left[\epsilon+\left(1+\frac{1}{\epsilon}\right)\frac{\zeta_1^2}{\log(R/r)^{\frac{2(d-2)}{d}}}+\left(1+\epsilon+\frac{1}{\epsilon}\right)\frac{\zeta_2^2}{(\log(R/r))^{\frac{2(d-1)}{d}}}\right]\leq \delta
	$$
	and \eqref{334} follows.  Also, \eqref{335} follows as in \eqref{334}, since $|\Delta (1-\chir)|^2=|\Delta \chir|^2$, $|\nabla(1-\chir)|^2=|\nabla \chir|^2$ and $\chir=0$ outside $B(0,R)$ and 
	\begin{equation*}
		\begin{split}
			\int_{\R^d\backslash B(0,r)}|\Delta[(1-\chir)u_k]|^2dy &\leq (1+\epsilon)\int_{\R^d\backslash B(0,r)}|1-\chir|^2|\Delta u_k|^2dy\\
			&\quad + \left(1+\frac{1}{\epsilon}\right)\int_{\R^d\backslash B(0,r)}|u_k|^2|\Delta(1-\chir)|^2dy\\
			&\quad +\left(1+\epsilon+\frac{1}{\epsilon}\right)\int_{\R^d\backslash B(0,r)}|\nabla(1-\chir)|^2|\nabla u_k|^2dy.\\
		\end{split}
	\end{equation*}
\end{proof}
With this in hand, we state the localized Sobolev-type inequality as follows. The proof is the same as in \cite[Corollary 9]{FM}, so the details will be omitted.
\begin{lemma}\label{localsob}
	Let $\ub\in\Dz$ with $\ub\geq\mathbf{0}$. Fix $\delta>0$ and $r/R\leq C(\delta)$ as in Lemma \eqref{lemma38}. Then,
	\begin{equation}\label{337}
		\int_{B(x,r)}F(\ub)dy\leq I^{-\frac{d}{d-4}}\left[\int_{B(x,R)}\sumk\gamma_k |\Delta u_k|^2dy+\delta K(\ub)\right]^{\frac{d}{d-4}},
	\end{equation}
	\begin{equation}\label{338}
		\int_{\R^d\backslash B(x,R)}F(\ub)dy\leq I^{\frac{-d}{d-4}}\left[ \int_{\R^d\backslash B(x,r)}\sumk\gamma_k |\Delta u_k|^2dy+(2\delta+\delta^2) K(\ub)\right]^{\frac{d}{d-4}}.
	\end{equation}
\end{lemma}
	Now, we proceed by contradiction and invoke Corollary \ref{localsob} to preclude the dichotomy case. Therefore, Lemma \eqref{lema35} yield the existence of a sequence $(x_m)\in\R^d$ such that for all $\epsilon>0$, there is a radius $R>0$ with
\begin{equation}\label{350}
	\nu_m(B(x_m))\geq 1-\epsilon,\quad\forall m,
\end{equation}
which together with \eqref{341} and the fact that $(\nu_m)$ is a sequence of probability measures, we get that $(\nu_m)$ is a uniformly tight sequence. Therefore, up to a subsequence,   $(\nu_m)$ converges weakly to some $\nu\in\Mmu$ (see for instance \cite[Theorems 30.6 and 31.2]{bauer}), that is, 
\begin{equation}\label{351}
	\int fd\nu_m\rightarrow \int fd\nu,\quad\forall f\in C_b(\R^d)
\end{equation}

Now, notice that $(\mu_m)$ is vaguely bounded because $(K(\vb_m))$ is uniformly bounded. Then, up to a subsequence, there is $\mu\in\Mmb$ such that
\begin{equation}\label{353}
	\mu_m\cfe\mu\quad\hbox{ in }\Mmb.		
\end{equation}
In particular, $\mu(\R^d)\leq\liminf_{m\rightarrow\infty}\mu_m(\R^d)$.

The next step is to use the ``second concentration compactness lemma''. Such result is the last key ingredient to prove Theorem \ref{lionsI1}, and is an adapted version of the limit case lemma presented in \cite[Lemma I.1]{lions2}. The proof follows the same approach as in the NLS regime presented in \cite[Lemma 6]{NoPa3}, regarding the differences for the BNLS regime, so we will omit the details. The result reads as follows.
\begin{lemma}[Concentration-compactness lemma II] \label{lema36}
	Let $(\ub_m) \subset\Dz$ be a sequence such that $\ub_m\geq \mathbf{0}$ and 
	\begin{equation}\label{314}
		\left\{\begin{array}{lcc}
			(\ub_m)\rightharpoonup \ub, & \hbox{in}& \Dz,\\
			\mu_m:=\displaystyle\sum_{k=1}^l\gamma_k|\Delta u_{km}|^2dx\overset{\ast}{\rightharpoonup} \mu & \hbox{in}& \mathcal{M}_+^b(\mathbb{R}^d)\\
			\nu_m:=F(\ub ) dx \overset{\ast}{\rightharpoonup} \nu,& \hbox{in}& \mathcal{M}_+^b(\mathbb{R}^d). \\
		\end{array}\right.
	\end{equation}
	Then,
	\begin{itemize}
		\item[(i)] There exists an at  most countable set $J$, a family of distinct points $\{x_j\in \mathbb{R}^d;\, j\in J\}$, and a family of non-negative numbers $\{\nu_j;\, j\in J\}$ such that
		\begin{equation}\label{315}
			\nu=F(\ub)dx+\sum_{j\in J}\nu_j\delta_{x_j}.
		\end{equation}
		
		\item[(ii)]Moreover, we have
		\begin{equation}\label{316}
			\mu \geq  \sum_{k=1}^l\gamma_k|\Delta u_k|^2dx +\sum_{j\in J}\mu_j\delta_{x_j}
		\end{equation}
		fore some family $\{\mu_j;\, j\in J\},b_j>0,$ such that
		\begin{equation}\label{317}
			\nu_j\leq I^{-\frac{d}{d-4}}\mu_j^{\frac{d}{d-4}},\quad \forall j\in J.
		\end{equation}
		In particular, $\displaystyle\sum_{j\in J}\nu_j^{\frac{d-4}{d}}<\infty$.
	\end{itemize}
\end{lemma}
	Next, \eqref{345}, \eqref{351} and \eqref{353} allow us to use Lemma \ref{lema36} to obtain
	\begin{equation}\label{354}
		\mu\geq\sumk\gamma_k|\Delta v_k|^2dx+\sum_{j\in J}\mu_j\delta_{x_j}\quad\hbox{and}\quad \nu=F(\vb)dx+\sum_{j\in J}\nu_j\delta_{x_j},			
	\end{equation}
	for some family $\{x_j\in\R^d;\, j\in J\}$, with $J$ countable and $\mu_j, \nu_j$ non-negatives numbers satisfying
	\begin{equation}\label{355}
		\nu_j\leq I^{-\frac{d}{d-4}}\mu_j^{\frac{d}{d-4}},\quad\forall j\in J,
	\end{equation}
	with $\sum_{j\in J}\nu_j^{\frac{d-4}{d}}$ convergent. Therefore, since $\nu(\R^d)=1$, \eqref{sobolevtype} and \eqref{355} yields
	\begin{equation}\label{356}
		\begin{split}
			I=\liminf_{m\rightarrow\infty}\mu_m(\R^d)&\geq\mu(\R^d)\geq K(\vb)+\sum_{j\in J}\mu_j\geq I\left[P(\vb)^{\frac{d-4}{d}}+\sum_{j\in J}\nu_j^{\frac{d-4}{d}}\right]\\
			&\geq I\left[P(\vb)+\sum_{j\in J}\nu_j\right]^{\frac{d-4}{d}}=I[\nu(\R^d)]=I,
		\end{split}
	\end{equation} 
	where we also used that $\lambda\mapsto\lambda^{\frac{d-4}{d}}$ is a strictly concave function. Since $P(\vb)\neq 0$, we observe that $\nu_j=0$ must hold for all $j\in J$, otherwise, if $\nu_{j_0}\neq 0$, for some $j_0$, then \eqref{354} would yield that $\nu=\nu_{j_0}\delta_{x_{j0}}$ and hence
	\begin{equation}\label{357}
		1=\nu(\mathbb{R}^d)=\nu_{j_0}.
	\end{equation}
However, the normalization \eqref{341} leads to
	$$
	\frac{1}{2}\geq\lim_{m\rightarrow\infty}\nu_m(B(x_{j_0},1))=\nu(B(x_{j_0},1))=\int_{B(x_{j_0},1)}d\nu=\nu_{j_0},
	$$
	which contradicts  \eqref{357}. As a result, we deduce that $\nu=F(\vb)dx$. Moreover, since $\nu\in\Mmu$, we have that $\vb\in\mathcal{P}$.
	
	Finally, notice that by the definition of $I$, we know that $I\leq K(\vb)$. On the other hand, the lower semi-continuity of the weak convergence \eqref{345}	yields $K(\vb)\leq\liminf_{m\rightarrow\infty}K(\vb_m)=I$. Hence, $K(\vb)=I=\lim_{m\rightarrow\infty}K(\vb_m)$ and, moreover, $\vb_m\rightarrow\vb$ strongly in $\Dz$, as desired.
\end{proof}
 Note that actually we have proved the following:
 
 \begin{corollary}\label{minimum}
 	There exists $\vb\in\mathcal{P}$ such that $P(\vb)=1$ and $K(\vb)=C_{\rm opt}^{\frac{d-4}{d}}$, where $C_{\rm opt}$ is the best constant in the general critical Sobolev-type inequality \eqref{sobolevtype}.
 \end{corollary}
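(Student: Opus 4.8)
The plan is to obtain Corollary~\ref{minimum} as an immediate reading of Theorem~\ref{lionsI1}, so essentially all the work has already been carried out in the outline above. First I would record that the normalized problem \eqref{MinNorm} admits a minimizing sequence, i.e.\ that $I=\inf\{K(\ub);\,\ub\in\Pc,\ P(\ub)=1\}$ is finite and strictly positive. Finiteness holds because the admissible set is non-empty: by Lemma~\ref{lemma22}(iv) the potential $F$ is positive on the positive cone of $\R^l$, so any non-trivial non-negative $\ub\in C_0^\infty(\R^d)^l$ has $P(\ub)=\int F(\ub)\,dx>0$, and after the dilation \eqref{obs34ii} one may rescale so that $P(\ub)=1$. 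Strict positivity $I>0$ is exactly the Sobolev-type inequality \eqref{sobolevtype}: if $P(\ub)=1$, then $1\le CK(\ub)^{d/(d-4)}$, forcing $K(\ub)\ge C^{-(d-4)/d}>0$. By Remark~\ref{minimization}(i) I may also assume the minimizing sequence is non-negative without loss of generality.

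Next I would feed such a minimizing sequence $(\ub_m)$ into Theorem~\ref{lionsI1}: up to translation and dilation there is a subsequence $(\vb_j)$, with $\vb_j=R_j^{-\frac{d-4}{2}}\ub_{m_j}(R_j^{-1}(x-y_j))$, converging strongly in $\mathcal{N}$ (that is, in $\Dz$) to some $\vb$ that minimizes \eqref{MinNorm}. Being a minimizer, $\vb$ is admissible and achieves the infimum, so $P(\vb)=1$, $\vb\in\Pc$, and $K(\vb)=I$; the normalization $P(\vb)=1$ passes to the limit precisely because of the continuity estimate for $P$ along $\Dz$-convergent sequences established in \eqref{n319}, which rests on Lemma~\ref{lemma22} and the embedding $H^2\hookrightarrow L^{2d/(d-4)}$. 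Finally, to phrase $K(\vb)=I$ in terms of the best constant I would invoke the homogeneity identity of Remark~\ref{minimization}(ii), $C_{\rm opt}^{-1}=I^{\frac{d}{d-4}}$, i.e.\ $I=(C_{\rm opt}^{-1})^{\frac{d-4}{d}}$, which is the asserted relation between $K(\vb)$ and the optimal constant $C_{\rm opt}$ in \eqref{sobolevtype}.

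Since the substantive analysis — the concentration-compactness dichotomy, the localized Sobolev inequality of Lemma~\ref{lemma38}, and the exclusion of vanishing and dichotomy — is entirely contained in the proof of Theorem~\ref{lionsI1}, there is no real obstacle left. The only points that deserve a line of care are verifying that $I\in(0,\infty)$ so that a minimizing sequence exists, checking that $P$ is continuous along the strongly convergent subsequence so that the constraint $P(\vb)=1$ (hence admissibility of $\vb$) survives in the limit, and the bookkeeping of the scaling relation tying $I$ to $C_{\rm opt}$.
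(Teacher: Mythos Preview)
Your approach is correct and matches the paper's: the corollary is presented there simply as a restatement of what the proof of Theorem~\ref{lionsI1} already established, namely that the limit $\vb$ satisfies $P(\vb)=1$, $\vb\in\mathcal{P}$, and $K(\vb)=I$, together with the identity $C_{\rm opt}^{-1}=I^{d/(d-4)}$ from Remark~\ref{minimization}(ii). One small slip: the transformation \eqref{obs34ii} leaves $P$ invariant, so to normalize $P(\ub)=1$ you should instead use the homogeneity $P(\lambda\ub)=\lambda^{2d/(d-4)}P(\ub)$ from \ref{H5}; this is a trivial fix and does not affect the argument.
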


We are now in a position to prove Theorem \ref{ESGS}. 

\begin{proof}[Proof of Theorem \ref{ESGS}] Let $\vb$ be the minimizer of \eqref{MinNorm}, that is, $K(\vb)=I$ and $P(\vb)=1$. So, by Remark \eqref{minimization}, we have
	\[
	J(\vb)=\frac{K(\vb)^{\frac{d}{d-4}}}{P(\vb)}=I^{\frac{d}{d-4}}=C_{\rm opt}^{-1}.
	\]
	Therefore, $\vb$ is also a minimizer of $J$. By \eqref{SJ}, $\vb$ is a minimum of $E$ and hence is a critical point of $E$, that is,  for any $\gb\in\dz$, $E'(\vb)(\gb)=0$. 	By Lemma \ref{frechet}, follows that $\vb$ is a solution to \eqref{eliptic}. Consequently $\vb$ is a ground state solution, as we required.
	
\end{proof}
	
	\begin{remark}
	One may notice that we do not have any information about the uniqueness of the ground state solutions for \eqref{eliptic}. Despite that,  Remark \ref{minimization} (ii) implies that $C_{\rm opt}^{-1}$ is indeed the optimal constant to \eqref{sobolevtype} and do not depend on the choice of the ground state, that is, for all $\ub\in\mathcal{P}$, holds
	\begin{equation}\label{optimal}
		P(\ub)\leq C_{\rm opt}K(\ub)^{\frac{d}{d-4}},
	\end{equation}
	where the optimal constant is given by
	\begin{equation}\label{optcte}
		C_{\rm opt}=\frac{1}{C_d}\frac{1}{E(\psib)^{\frac{4}{d-4}}}=\frac{1}{C_d}\left(\frac{d}{2K(\psib)}\right)^{\frac{4}{d-4}},\quad \hbox{where }C_d=\left(\frac{d-4}{2d}\right)\left(\frac{d}{2}\right)^{\frac{4}{d-4}}.
	\end{equation}
	\end{remark}
	
	\section{Blow-up}
	As mentioned before, this section is devoted to prove that solutions of \eqref{SISTB} with positive initial energy below that of the ground states but
	with kinetic energy above that of the ground states, or negative energy must blow up in finite time. Our strategy rely on the argument introduced in \cite{BL17}. The first step is to prove a localized virial identity. Our first result is the following
	\begin{lemma}\label{IV}(Virial identity) Let 	$\phi$ to be a smooth function and consider $\phi_R=R^2\phi(x/R)$ for $R>0$. Set 
		\[
		\V_R(t)=-2\sumk\alpha_k\IM\int\nabla\phi_R\cdot\nabla u_k\bu_kdx.
		\]
		Then
		\begin{equation}
			\begin{split}
				\V_R'(t)&= 8\RE\sumk\gk\sum_{j,n,m}\int\partial^2_{n,m}\phi_R\partial^2_{m,j}u_k\partial^2_{j,n}\bu_kdx\\
				&\quad-4\RE\sumk \gk\sum_{j,n}\int\partial_j\bu_k\partial^2_{j,n}\Delta\phi_R\partial_nu_kdx\\
				&\quad-2\sumk\gk\int\Delta^2\phi_R|\nabla u_k|^2dx+\sumk\gk\int\Delta^3\phi_R|u_k|^2dx\\
				&\quad-\frac{16}{d-4}\RE\int F(\ub)\Delta \phi_R.\\
			\end{split}
		\end{equation}
	\end{lemma}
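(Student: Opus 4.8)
The plan is to differentiate $\V_R$ along the flow, use the equation to eliminate $\partial_t u_k$, and then integrate by parts, the nonlinear terms being handled by the Pohozaev-type identities of Lemma \ref{lemma22}. First I would note that $\V_R(t)$ is finite along an $\Hb^2$-solution: since $\phi$ is smooth with bounded gradient, $\nabla\phi_R$ is bounded, so by H\"older $|\V_R(t)|\lesssim\sumk\|\nabla u_k\|_{L^2}\|u_k\|_{L^2}<\infty$. Rewriting \eqref{SISTB} as $\ak\partial_tu_k=i\gk\Delta^2u_k-if_k(\ub)$ and differentiating under the integral sign, one splits $\V_R'(t)=A_R(t)+\V_R'(t)\big|_{\mathrm{NL}}$, where $A_R(t)$ collects the contribution of the $\gk\Delta^2u_k$ term and $\V_R'(t)\big|_{\mathrm{NL}}$ that of the $-f_k(\ub)$ term. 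For a mere $\Hb^2$-solution $\Delta^2u_k$ only lives in $\Hb^{-2}$, so this is formal; I would first prove the identity for smooth, spatially decaying solutions, where every integration by parts below is licit, and then extend to arbitrary $\Hb^2$-data by approximation, using the local theory of Section~4 together with the continuous dependence from Proposition \ref{stability} — legitimate because the right-hand side of the claimed formula only involves quantities bounded by the $\Hb^2$-norm.

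For the dispersive term, using $\IM(iz)=\RE z$ the $\gk\Delta^2u_k$-part reduces to
\begin{equation*}
A_R(t)=-2\RE\sumk\gk\int\nabla\phi_R\cdot\bigl(\nabla(\Delta^2u_k)\,\bu_k-\Delta^2\bu_k\,\nabla u_k\bigr)\,dx .
\end{equation*}
I would then integrate by parts four times, moving all derivatives onto the weight $\phi_R$ (whose derivatives of order $\ge1$ are bounded) and symmetrising the arising quadratic forms in the second derivatives of $u_k$; this is exactly the free-biharmonic momentum-flux computation carried out for a single equation in \cite{BL17}, performed here componentwise and summed with the weights $\gk$. It produces the Hessian term $8\RE\sumk\gk\sum_{j,n,m}\int\partial^2_{n,m}\phi_R\,\partial^2_{m,j}u_k\,\partial^2_{j,n}\bu_k\,dx$, the term $-4\RE\sumk\gk\sum_{j,n}\int\partial_j\bu_k\,\partial^2_{j,n}\Delta\phi_R\,\partial_nu_k\,dx$, and the two lower-order terms $-2\sumk\gk\int\Delta^2\phi_R|\nabla u_k|^2\,dx$ and $\sumk\gk\int\Delta^3\phi_R|u_k|^2\,dx$, i.e. the first three lines of the asserted identity.

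For the nonlinear part, the $f_k(\ub)$-part of $\ak\partial_tu_k$ is $-if_k(\ub)$, and again $\IM(\pm iz)=\pm\RE z$ gives
\begin{equation*}
\V_R'(t)\big|_{\mathrm{NL}}=2\RE\sumk\int\nabla\phi_R\cdot\nabla f_k(\ub)\,\bu_k\,dx-2\RE\sumk\int\nabla\phi_R\cdot\nabla u_k\,\overline{f_k(\ub)}\,dx .
\end{equation*}
In the second integral, $\RE\bigl(\overline{f_k(\ub)}\,\nabla u_k\bigr)=\RE\bigl(f_k(\ub)\,\nabla\bu_k\bigr)$, so Lemma \ref{lemma22}(iii) turns it into $\int\nabla\phi_R\cdot\nabla[\RE F(\ub)]\,dx=-\int\Delta\phi_R\,\RE F(\ub)\,dx$. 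In the first integral I would write $\partial_jf_k(\ub)\,\bu_k=\partial_j\bigl(f_k(\ub)\bu_k\bigr)-f_k(\ub)\,\partial_j\bu_k$; integrating by parts in the first piece and using $\RE\sumk f_k(\ub)\bu_k=\tfrac{2d}{d-4}\RE F(\ub)$ from Lemma \ref{lemma22}(ii), and treating the second piece as above via Lemma \ref{lemma22}(iii), yields $\RE\sumk\int\nabla\phi_R\cdot\nabla f_k(\ub)\,\bu_k\,dx=-\tfrac{d+4}{d-4}\int\Delta\phi_R\,\RE F(\ub)\,dx$. Adding the two pieces, $\V_R'(t)\big|_{\mathrm{NL}}=\bigl(2-\tfrac{2(d+4)}{d-4}\bigr)\int\Delta\phi_R\,\RE F(\ub)\,dx=-\tfrac{16}{d-4}\RE\int F(\ub)\Delta\phi_R$, which is the last line; combined with $A_R(t)$ this proves the lemma.

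I expect the main obstacle to be the biharmonic computation of $A_R(t)$: carefully accounting for every term generated by the four integrations by parts and checking that the coefficients collapse to exactly $8,-4,-2,1$ — in particular that the contributions involving $|\Delta u_k|^2$ and $|D^2u_k|^2$ reorganise into the single symmetric contraction $\partial^2_{n,m}\phi_R\,\partial^2_{m,j}u_k\,\partial^2_{j,n}\bu_k$ — is the delicate point, though it parallels the scalar case in \cite{BL17}. By comparison the nonlinear part is short once Lemma \ref{lemma22} is available, and the approximation step handling the $\Hb^2$-regularity is routine (via persistence of regularity, or a Friedrichs mollifier directly in the computation).
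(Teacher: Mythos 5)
Your proposal is correct and essentially the same as the paper's proof: the paper rewrites $\V_R(t)=-2\RE\sumk\int\ak\bu_k\GR u_k\,dx$ with the Morawetz operator $\GR=-i(\nabla\phi_R\cdot\nabla+\Delta\phi_R)$ and computes $\V_R'$ as $\sumk\gk\langle u_k,[\Delta^2,i\GR]u_k\rangle-\sumk\langle u_k,[f_k(\ub),i\GR]u_k\rangle$, citing \cite[Lemma 3.1]{BL17} for the biharmonic commutator and using Lemma~\ref{lemma22} parts (ii)--(iii) for the nonlinear piece, whereas you differentiate $\V_R$ directly and split into dispersive and nonlinear contributions — but the resulting integrations by parts, the appeal to \cite{BL17} for the Hessian-contraction terms, and the use of \eqref{fk24} and \eqref{regradF} are the same. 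Your bookkeeping of the nonlinear part (the coefficients $2$ and $-\tfrac{2(d+4)}{d-4}$ summing to $-\tfrac{16}{d-4}$) agrees with the paper's equivalent decomposition into $-4\int\Delta\phi_R\,\RE F$ and $\tfrac{4d}{d-4}\int\Delta\phi_R\,\RE F$, and your remark about first proving the identity for smooth decaying data and then approximating is a sound point the paper leaves implicit.
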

	\begin{proof}
		We start by using that $2i \IM(z)=z-\bar{z}$ to get
		\begin{equation}\label{R1}
			i\V_R(t)=-2i\IM\sumk \int\ak \bu_k\nabla u_k\cdot\nabla \phi_Rdx
			=\sumk\int\ak u_k\nabla \bu_k\cdot\nabla \phi_Rdx-\sumk \int \ak \bu_k\nabla u_k\cdot \phi_Rdx.
		\end{equation}
		Using integration by parts we have
		\begin{equation}\label{R2}
			\int\sumk\ak|u_k|^2\Delta\phi_R=-\sumk\int\ak u_k\nabla \bu_k\cdot\nabla\phi_Rdx-\sumk\int\ak \bu_k\nabla u_k\cdot\nabla \phi_Rdx.
		\end{equation}
		Combining \eqref{R1} and \eqref{R2} we get
		\begin{equation}\label{R3}
			\begin{split}
				i	\V_R(t)&=-2\sumk\int\ak\bu_k\nabla u_k\cdot\nabla\phi_Rdx-\sumk\int\ak\bu_k\Delta\phi_R u_kdx\\
				&=-\sumk\int\ak \bu_k[\nabla\phi_R\cdot\nabla +\Delta\phi_R]u_kdx-\sumk\int\ak \bu_k\nabla u_k\cdot\nabla\phi_Rdx.
			\end{split}	
		\end{equation}
		By the definition of the Morawetz potential we conclude that
		\[
		\V_R(t)=-2\RE\sumk\int\ak\bu_k\Gamma_{\phi_R}u_kdx,
		\]
		where $\GR:=-i\left(\nabla\phi_R\cdot\nabla +\Delta\phi_R\right)$. By introducing the notation
		\[ \V_R(t):= -\sumk\langle\ak u_k,\GR u_k\rangle=-\sumk\langle i\ak u_k, i\GR u_k\rangle,  \]
		taking the time derivative and using \eqref{SISTB}, we obtain
		\[\begin{split}
			\V'_R(t)&=\sumk\gk\langle u_k,\left[\Delta^2, i\GR \right]u_k \rangle - \sumk\langle u_k, \left[f_k(\ub), i\GR\right] u_k \rangle\\
			&:= A-B,
		\end{split}\]
		where $[X,Y ]=XY-YX$. Note that the term $A$ is essentially the same as $\mathcal{A}_R^{[1]}$ in \cite[Lemma 3.1]{BL17}, taking into account the suitable modifications for the vectorial case. Then we get
		\begin{equation}\label{AA}
			\begin{split}
				A&= 8\RE\sumk\gk\sum_{j,n,m}\int\partial^2_{n,m}\phi_R\partial^2_{m,j}u_k\partial^2_{j,n}\bu_kdx\\
				&\quad-4\RE\sumk \gk\sum_{j,n}\int\partial_j\bu_k\partial^2_{j,n}\Delta\phi_R\partial_nu_kdx\\
				&\quad-2\sumk\gk\int\Delta^2\phi_R|\nabla u_k|^2dx+\sumk\gk\int\Delta^3\phi_R|u_k|^2dx.\\
			\end{split}
		\end{equation}
		For $B$, we use integration by parts and Lemma \ref{lemma22} to get
		
		\begin{equation}\label{B1}
			\begin{split}
				B&= 2\sumk\RE\left[\int f_k(\ub)i\GR\bu_kdx-\int\bu_k i\GR f_k(\ub)dx\right]\\
				&=2\sumk\RE\left[ \int f_k(\ub)\nabla\phi_R\cdot\nabla\bu_kdx+\int f_k(\ub)\bu_k \Delta\phi_Rdx \right]\\
				&\quad -2\sumk\RE\left[ \int \bu_k\nabla\phi_R\cdot\nabla f_k(\ub)dx +\int f_k(\ub)\Delta\phi_R\bu_kdx\right]\\
				&=2\sumk\RE\left[2\int f_k(\ub)\nabla u_k\cdot\nabla\phi_Rdx+\int f_k(\ub)\bu_k\Delta\phi_Rdx\right]\\
				&=4\RE\int\nabla F(\ub)\cdot\nabla\phi_Rdx+\frac{4d}{d-4}\RE\int F(\ub)\Delta \phi_Rdx\\
				&=\frac{16}{d-4}\RE\int F(\ub)\Delta \phi_Rdx.
			\end{split}
		\end{equation}
		The proof is complete by summing \eqref{AA} and \eqref{B1}.
	\end{proof}
	Now, consider $\phi:\R^d\rightarrow\R$ to be a radial function with regularity property $\nabla^j\phi\in L^\infty(\R^d)$ for $1\leq j\leq6$ and such that 
	\[  \phi(r)=\begin{cases}
		r^2/2,&r\leq 1\\
		\hbox{constant},&r\geq 10\\
	\end{cases}\quad\hbox{and}\quad \phi''(r)\leq 1,\hbox{ for } r\geq 0,\]
	and we define, for $R>0$, the rescaled function  $\phi_R:\R^d\rightarrow\R$ by
	\[\phi_R(r):=R^2\phi\left(\frac{r}{R}\right).\]
    A straightforward calculation gives us that for all $r\geq 0$,
	\begin{equation}\label{PHI}
		\phi_R''(r)\geq 1,\quad 1-\frac{\phi_R'(r)}{r}\geq 0, \quad \Delta\phi_R(r)\leq d.
	\end{equation}
	Then, we have the following result.
	\begin{lemma}\label{virialBL}
		Let $5\leq d\leq 16$. 	Suppose that $\ub$ is radial solution of \eqref{SISTB} defined on the interval $[0,T)$.
		
		Then for any $t\in I$, we have
		\begin{equation}\label{virialinequality}
			\V'_R(t)\leq \frac{16d}{d-4}E(\ub_0)-\frac{32}{d-4}K(\ub)+O\left(R^{-4}+R^{-\frac{4(d-1)}{d-4}}\sumk\Vert \nabla u_k\Vert_{L^2}^{\frac{4}{d-4}}+R^{-2}\sumk \Vert \nabla u_k\Vert_{L^2}^2\right).
		\end{equation}
	\end{lemma}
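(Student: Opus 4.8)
The plan is to start from the exact virial identity of Lemma~\ref{IV} applied to the cut-off $\phi_R$, and to exploit that $\phi_R(r)=r^2/2$ on $\{|x|\le R\}$, so that every term other than the leading biharmonic and potential contributions is supported in the annulus $\{R\le|x|\le 10R\}$ and, by the scaling $\partial^j\phi_R=R^{2-j}(\partial^j\phi)(\cdot/R)$, carries a decaying power of $R$. Writing the five summands of Lemma~\ref{IV} as $A_1+A_2+A_3+A_4+B_{\mathrm{NL}}$ (the four ``linear'' ones and the nonlinear one), I would: (i) show $A_1\le 8K(\ub)$; (ii) estimate $A_2,A_3,A_4$ by the stated remainder; (iii) peel off $-\frac{16d}{d-4}P(\ub)$ from $B_{\mathrm{NL}}$ and control the rest on the annulus; and (iv) use conservation of energy to rewrite $8K(\ub)-\frac{16d}{d-4}P(\ub)$ as $\frac{16d}{d-4}E(\ub_0)-\frac{32}{d-4}K(\ub)$.

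The leading term $A_1$ is where the radial hypothesis is essential. For a radial function $v$ one has $\partial^2_{jn}v=\partial_r^2 v\,\widehat x_j\widehat x_n+\frac{\partial_r v}{r}\big(\delta_{jn}-\widehat x_j\widehat x_n\big)$ (with $\widehat x=x/|x|$), and $\nabla^2\phi_R$ has the same block form with ``eigenvalues'' $\phi_R''$ and $\phi_R'/r$; contracting the three Hessians therefore gives the pointwise (real, nonnegative) identity
\[
\RE\sum_{j,n,m}\partial^2_{nm}\phi_R\,\partial^2_{mj}u_k\,\partial^2_{jn}\bu_k=\phi_R''\,|\partial_r^2 u_k|^2+(d-1)\frac{\phi_R'}{r^3}\,|\partial_r u_k|^2 .
\]
Combining this with the integration-by-parts identity $\int|\Delta u_k|^2=\int\big(|\partial_r^2 u_k|^2+(d-1)\frac{|\partial_r u_k|^2}{r^2}\big)$ and with the pointwise bounds $\phi_R''\le 1$, $\phi_R'/r\le 1$ (from the construction of $\phi$ and from \eqref{PHI}) one gets
\[
A_1-8K(\ub)=8\sumk\gk\int\Big[(\phi_R''-1)|\partial_r^2 u_k|^2+(d-1)\Big(\tfrac{\phi_R'}{r}-1\Big)\tfrac{|\partial_r u_k|^2}{r^2}\Big]\,dx\le 0 .
\]

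For the remainders, $\Delta\phi_R$ is constant on $\{|x|\le R\}$ and on $\{|x|\ge 10R\}$, so the derivatives of $\Delta\phi_R$ appearing in $A_2$, together with $\Delta^2\phi_R$ in $A_3$ and $\Delta^3\phi_R$ in $A_4$, are supported in the annulus with $L^\infty$-norms $\lesssim R^{-2}$, $R^{-2}$, $R^{-4}$ respectively; hence $|A_2|+|A_3|\lesssim R^{-2}\sumk\Vert\nabla u_k\Vert_{L^2}^2$ and $|A_4|\lesssim R^{-4}\sumk\Vert u_k\Vert_{L^2}^2\lesssim R^{-4}$ by conservation of mass. For $B_{\mathrm{NL}}=-\frac{16}{d-4}\RE\int F(\ub)\Delta\phi_R$, using $\Delta\phi_R=d$ on $\{|x|\le R\}$ I would write $B_{\mathrm{NL}}=-\frac{16d}{d-4}P(\ub)+\frac{16}{d-4}\int_{|x|>R}\RE F(\ub)\,(d-\Delta\phi_R)\,dx$; since $0\le d-\Delta\phi_R\lesssim 1$ and $|\RE F(\ub)|\lesssim\sumk|u_k|^{\frac{2d}{d-4}}$ by \eqref{fk23}, the remainder is $\lesssim\sumk\int_{|x|>R}|u_k|^{\frac{2d}{d-4}}$. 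Here the radial hypothesis enters once more through the Strauss radial Sobolev inequality $\Vert u_k\Vert_{L^\infty(|x|>R)}\lesssim R^{-\frac{d-1}{2}}\Vert u_k\Vert_{L^2}^{1/2}\Vert\nabla u_k\Vert_{L^2}^{1/2}$: since $\frac{2d}{d-4}-2=\frac{8}{d-4}$, this yields $\int_{|x|>R}|u_k|^{\frac{2d}{d-4}}\lesssim R^{-\frac{4(d-1)}{d-4}}\Vert u_k\Vert_{L^2}^{\frac{4}{d-4}+2}\Vert\nabla u_k\Vert_{L^2}^{\frac{4}{d-4}}\lesssim R^{-\frac{4(d-1)}{d-4}}\Vert\nabla u_k\Vert_{L^2}^{\frac{4}{d-4}}$ after invoking the conserved mass bound. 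Collecting (i)--(iii) gives $\V_R'(t)\le 8K(\ub)-\frac{16d}{d-4}P(\ub)+O\big(R^{-4}+R^{-\frac{4(d-1)}{d-4}}\sumk\Vert\nabla u_k\Vert_{L^2}^{\frac{4}{d-4}}+R^{-2}\sumk\Vert\nabla u_k\Vert_{L^2}^2\big)$, and substituting $P(\ub)=\frac12 K(\ub)-E(\ub_0)$ (conservation of energy, which holds under \ref{H3}--\ref{H5}) together with $8-\frac{8d}{d-4}=-\frac{32}{d-4}$ produces exactly \eqref{virialinequality}.

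I expect the delicate step to be the leading term $A_1$: the bound $A_1\le 8K(\ub)$ rests entirely on the radial structure, which makes $\nabla^2\phi_R$ and $\nabla^2 u_k$ simultaneously diagonal, together with the sign conditions $\phi_R''\le 1$ and $1-\phi_R'/r\ge 0$ on the cut-off; for non-radial data no such clean bound is available, which is precisely why the statement is confined to radial solutions. The secondary technical point is to justify the integrations by parts and the Hessian-contraction identity for $H^2$ solutions rather than for Schwartz functions, which is handled by a routine density argument.
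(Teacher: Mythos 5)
Your proof is correct and follows essentially the same route as the paper: decompose $\V_R'$ via Lemma~\ref{IV}, bound the leading Hessian-contraction term by $8K(\ub)$ using the radial structure, absorb the annulus-supported remainders into the $O(\cdot)$ terms, use Strauss to control the tail of $P$, and finally convert $8K-\tfrac{16d}{d-4}P$ via energy conservation. The only difference is cosmetic: you carry out in full the radial Hessian-diagonalization argument behind $A_1\le 8K(\ub)$, which the paper merely cites from \cite[Section 3.1]{BL17}.
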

	\begin{proof}
		Let $\ub$ be a radial solution of \eqref{SISTB}.  Then, using integration by parts we have the bound (see \cite[Section 3.1]{BL17} for details)
		\begin{equation}\label{v1}
			8\langle u_k, \partial_{j,n}^2(\partial^2_{n,m}\phi_R)\partial_{m,j}u_k  \rangle\leq 8\int|\Delta u_k|^2,\quad\kl.
		\end{equation}
		Also, by definition of $\phi_R$, one can easily check that, for $\kl$,
		\begin{equation}\label{v2}
			\begin{split}
				|\langle u_k, \partial_j(\partial^2_{j,n}\Delta \phi_R)\partial_nu_k\rangle|&\lesssim R^{-2}\Vert\nabla u_k\Vert_{L^2}^2\\
				|\langle u_k,\partial_j(\Delta^2\phi_R)\partial_l   \rangle| &\lesssim R^{-2}\Vert\nabla u_k\Vert_{L^2}^2\\
				|\langle u_k,\Delta^3 \phi_R u_k\rangle|&\lesssim R^{-4}\Vert u_k\Vert_{L^2}^2
			\end{split}
		\end{equation}
		Therefore, combining \eqref{v1}, \eqref{v2} with Lemma \ref{virialBL}, we get 
		
		\[
		\begin{split}
			\V'_R(t)&\lesssim 8K(\ub)+R^{-4}\sumk\Vert \nabla u_k\Vert_{L^2}^2+R^{-4}\sumk \ak \Vert u_k\Vert_{L^2}^2-\frac{16}{d-4}\RE\int \left(\Delta\phi_R-d\right)F(\ub)dx,
		\end{split}\]
		where we have used the fact that for $r\leq R$, $\phi_R(r)=r^2/2$ and hence $\Delta \phi_R-d\equiv0$. Now, Lemma \ref{lemma22} and Strauss inequality combined yields, for $\kl$, 
		\[
		\int_{|x|\geq R}|u_k|^{\frac{2d}{d-4}}\leq R^{-\frac{4(d-1)}{d-4}}\Vert u_k\Vert_{L^2}^{\frac{2d-4}{d-4}}\Vert u_k\Vert_{L^2}^{\frac{4}{d-4}},
		\]
		which implies
		\[
		\V'_R(t)\leq 8K(\ub)-\frac{16d}{d-4}P(\ub)+ O\left(R^{-4}+R^{-\frac{4(d-1)}{d-4}}\sumk\Vert \nabla u_k\Vert_{L^2}^{\frac{4}{d-4}}+R^{-2}\sumk \Vert \nabla u_k\Vert_{L^2}^2\right).
		\]
		By de definition of energy \eqref{energy} and energy conservation we arrive at \eqref{virialinequality}.
	\end{proof}
	
	Now we are in position to prove Theorem \ref{teoblowup}
		
	\begin{proof}[Proof of Theorem \ref{teoblowup}]
		\textit{Case 1:} $E(\ub_0)<0$. By Lemma \ref{virialinequality} we have that	
		\[
		\V'_R(t)\leq \frac{16d}{d-4}E(\ub_0)-\frac{32}{d-4}K(\ub)+O\left(R^{-4}+R^{-\frac{4(d-1)}{d-4}}\sumk\Vert \nabla u_k\Vert_{L^2}^{\frac{4}{d-4}}+R^{-2}\sumk \Vert \nabla u_k\Vert_{L^2}^2\right).
		\]
		A simply interpolation argument, combined with mass conservation gives us 
		\[
		\Vert \nabla u_k\Vert_{L^2}^{\frac{4}{d-4}}\lesssim \Vert \Delta u_k\Vert_{L^2}^{\frac{2}{d-4}}\quad\hbox{and}\quad\Vert \nabla u_k\Vert_{L^2}^{2}\lesssim \Vert \Delta u_k\Vert_{L^2},\quad\kl.
		\]
		Since $d\geq5$, we have $\frac{2}{d-4}\leq2$ which yields
		\[
		\Vert \Delta u_k\Vert_{L^2}^{\frac{2}{d-4}}\leq 1+ \Vert \Delta u_k\Vert_{L^2}^{2}\quad\hbox{and}\quad\Vert \Delta u_k\Vert_{L^2}\leq 1+\Vert \Delta u_k\Vert_{L^2}^{2},\quad\kl.
		\]
		Therefore, taking $R$ large enough and absorbing the error term we arrive at
		\begin{equation}\label{blow1}
			\V_R'(t)\leq \frac{8d}{d-4}E(\ub_0)-\frac{16}{d-4}K(\ub)<-C<0.
		\end{equation}
		Also, 
		\begin{equation}\label{blow2}
			|\V_R(t)|\lesssim \sumk \Vert \nabla\phi_R\Vert_{L^\infty}M(\ub)\Vert\nabla u_k\Vert_{L^2}\lesssim R K(\ub)^{1/4}.
		\end{equation}
		Assume that $T=\infty$. This allow us to choose $t_1$ large enough such that \eqref{blow1} implies that $\V_R'(t)\leq 0$ for all $t\geq t_1$. Integrating \eqref{blow1} on $[t_1,t]$, the Fundamental theorem of calculus gives us
		\[
		\V_R(t)\leq -\frac{16}{d-4}\int_{t_1}^t K(\ub(s))ds
		\]
		which, by \eqref{blow2}, implies
		\begin{equation}\label{blow3}
			\V_R(t)\leq -C_0\int_{t_1}^t |\V_R(s)|^4ds.
		\end{equation}
		where $C_0=C(\ub_0, R)$. Now, if we denote by $A(t):=\int_{t_1}^t |\V_R(s)|^4ds$ then  we have that $A(t)$ is non-decreasing and by \eqref{blow3},
		\[
		A'(t)=|\V_R(t)|^4\geq C_0^4 A(t)^4.
		\]
		Thus, for $t_2>t_1$, 
		\[
		\int_{t_2}^t \frac{A'(s)}{A(s)^4}ds\geq C_0^4(t-t_2)
		\]
		which implies
		\[
		A(t)^3\geq \frac{A(t_2)^3}{1-3C_0^4A(t_2)^3(t-t_2)}.
		\]
		Hence, $A(t)\rightarrow\infty$ as $t\rightarrow t^*:=t_2+\frac{1}{3C_0^4A(t_2)^3}$. Since $\V_R(t)\leq -C_0A(t)$, it follows that $\V_r(t)\rightarrow-\infty$ as $t\rightarrow t^*$. Therefore, \eqref{blow2} implies that $K(\ub(t))\rightarrow\infty$ which is a contradiction with $T = \infty$. Therefore, $T<\infty$, and by Corollary \ref{standblowup} we have that $S_{[0,T)}(\ub)=\infty.$
		
		\quad\newline
		\textit{Case 2:} $E(\ub)\geq0 $ and \eqref{blowupcondition} holds. Define the function $G(y)=\frac{1}{2}y-C_{\rm opt}y^{\frac{d}{d-4}}$, where $C_{\rm opt}$ is given in \eqref{optcte}. It is easy to see that $G(y)$ has a unique global maximum given by
		\[
		G(y_{\rm max})=\frac{2}{d}y_{\rm max}\quad\hbox{with}\quad y_{\rm max}^{\frac{4}{d-4}}=\left(\frac{d-4}{2d} \right)C_{\rm opt}^{-1}
		\]
		By Lemma \ref{pohozaevident}, we get
		\[
		y_{\rm max}^{\frac{4}{d-4}}=\left(\frac{d-4}{2d} \right)J(\psib)
		\]
		which implies that $y_{\rm max}=K(\psib)$ and $F(y_{\rm max})=E(\psib)$. By \eqref{blowupcondition} and a continuity argument we deduce that
		$K(\ub(t))>K(\psib)$ for all $t\in[0,T)$ (see Hespanha and Scarpelli \cite{HS}, Lemma 2.6) . Choosing $\eta>0$ such that $E(\ub_0)<(1-\eta)E(\psib)$ and using once more Lemma \ref{pohozaevident}, we get
		\[
		E(\ub_0)<(1-\eta)\frac{2}{d}K(\ub(t)),\quad \forall t\in[0,T).
		\]
		Using Lemma \ref{virialinequality} together with the uniform lower bound $K(\ub(t))>K(\psib)$ and choosing $R$ large enough, we arrive at
		\[
		\V'_R(t)\leq -\eta\frac{32}{d-4}K(\ub(t)),\quad \forall t\in[0,T).
		\]
		Arguing as before, we get that $T<\infty$. By Corollary \ref{standblowup} we have that $S_{[0,T)}(\ub)=\infty.$
		
	\end{proof}

\section{Scattering}
This section is devoted to prove the scattering result and, consequently, the global well-posedness in $\Dz$. This will be done by the concentration-compactness and rigidity method. The argument here is very similar to the one in \cite{MXZ}, so we will focus on the proofs that the nonlinearities play important roles, which is the main difference in our work.  Before start the procedure, we need the local well-posedness in $\Dz$. This can be done as in the NLS case. Using the same argument as in \cite[Chapter 3]{killip3}, one can take a initial data in the inhomogeneous Sobolev space $\mathbf{H}^2(\R^d)$ and get a local solution by using Corollary \ref{localexi}. Once we have the solution, Lemma \ref{stability} plays a fundamental role in the argument. In addition to ensuring continuous dependence on the initial data, it also allows us to work with the initial data in the homogeneous Sobolev space $\Dz$ by using an approximating procedure, since $\Dz$ functions is well approximated by $\mathbf{H}^2(\R^d)$ functions. The procedure is standard and for this reason we will omit the details. We summarize the result in the following Theorem.  

\begin{theorem}(Local well-posedness). Let $5\leq d\leq 16$. Given $\ub_0\in\Dz$, there exists a unique maximal-lifespan solution $\ub:I\times\R^d\rightarrow\C$ to \eqref{SISTB} with initial data $\ub(0)=\ub_0$. In addition, the solutions satisfies the following properties:
\begin{itemize}
\item[(i)] (Local existence) $I$ is a open neighborhood of $0$. 
\item[(ii)] (Continuous dependence) If $\ub_0^{(n)}$ is a sequence converging to $\ub_0\in\Dz$ and $\ub^{(n)}:I_n\times\R^d\rightarrow\C$ are the associated solutions to \eqref{SISTB}, then $\ub^{(n)}$ converge locally uniformly to the corresponding solution $\ub$ with initial data $\ub(0)=\ub_0$.
\item[(iii)] (Blow-up criterion) If $\sup(I)$ is finite, the $\ub$ blows-up forward in time. If $\inf(I)$ is finite, then $\ub$ blows-up backward in time.
\item[(iv)] (Small data global existence) If $K(\ub_0)$ is sufficiently small (depending on $d$), then $\ub$ is a global solution which does not blow-up either forward or backward in time. In this case, $S_\R(\ub)\lesssim K(\ub)^{\frac{d+4}{d-4}}$.
\end{itemize}
\end{theorem}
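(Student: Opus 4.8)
The plan is to build the $\Dz$ theory in two stages: first solve the Cauchy problem for data in the inhomogeneous space $\mathbf{H}^2(\R^d)$ via the fixed-point scheme already recorded in Theorem \ref{localexi}, and then transfer everything to the homogeneous space $\Dz$ by a density argument combined with the stability Proposition \ref{stability}.

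In the first stage, given $\ub_0\in\mathbf{H}^2$, I would note that by Strichartz \eqref{S319} and Remark \ref{estimatespaces} the free evolutions $U_k(\cdot)u_{k0}$ lie in $W(\R)$, so $\Vert U_k(t)u_{k0}\Vert_{W([0,T])}\to 0$ as $T\to 0$; hence the smallness condition \eqref{P52} holds on a short interval $I=[0,T]$ and Theorem \ref{localexi} produces a solution $\ub\in\Cb_t\Hb^2(I\times\R^d)$. Using time-translation and time-reversal symmetry one runs this from any point of the lifespan and in both directions, and the local uniqueness in Theorem \ref{localexi} together with a connectedness argument glues the pieces into a unique maximal-lifespan solution on an open interval $I\ni 0$.

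In the second stage, for $\ub_0\in\Dz$ I would choose $\ub_0^{(n)}\in\mathbf{H}^2$ with $\ub_0^{(n)}\to\ub_0$ in $\Dz$, take the solutions $\ub^{(n)}$ from the first stage, and fix a compact interval $I\ni 0$ on which their $\Zb(I)$- and $\Lb_t^\infty\dot{\Hb}^2(I)$-norms are bounded uniformly in $n$ (when $\Vert\Delta\ub_0\Vert_{\Lb^2}$ is small this is Corollary \ref{smalldata}; in general one obtains such an $I$ by splitting $\ub_0$ into a small $\Dz$ tail plus a regular bulk and invoking the small-data theory, as in \cite[Chapter 3]{killip3}). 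Viewing each $\ub^{(n)}$ as an approximate solution of \eqref{SISTB} with vanishing error, Proposition \ref{stability} applies: since $\Vert U_k(t)(u_{k0}^{(n)}-u_{k0}^{(m)})\Vert_{W(I)}\lesssim\Vert u_{k0}^{(n)}-u_{k0}^{(m)}\Vert_{\Dz}\to 0$ by Strichartz, the sequence $(\ub^{(n)})$ is Cauchy in $\Lb_t^q\dot{\Hb}_x^{2,r}(I\times\R^d)$ for every B-admissible $(q,r)$, and its limit $\ub$ solves \eqref{SISTB} with $\ub(0)=\ub_0$; extending to the maximal interval and reusing the same estimate gives existence and uniqueness. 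The four listed properties then follow: (i) is the construction itself; (ii) is precisely the Cauchy estimate above, now read for a general convergent sequence $\ub_0^{(n)}\to\ub_0$ in $\Dz$ and any compact subinterval of the limiting lifespan; (iii) is the contrapositive of Proposition \ref{globalexicrit}/Corollary \ref{standblowup}, which says $\sup I<\infty$ forces $S_{[0,\sup I)}(\ub)=\infty$, i.e.\ forward blow-up, and symmetrically at $\inf I$; and (iv) is Corollary \ref{smalldata}, with $K(\ub_0)$ small equivalent to $\Vert\Delta\ub_0\Vert_{\Lb^2}$ small, so that $\ub$ is global with $\Vert\Delta\ub\Vert_{\Mb(\R)}\lesssim\Vert\Delta\ub_0\Vert_{\Lb^2}$, whence combining with $\Vert\ub\Vert_{\Zb(\R)}\lesssim\Vert\ub\Vert_{\Mb(\R)}$ and the definition of $S_\R$ yields $S_\R(\ub)\lesssim K(\ub)^{\frac{d+4}{d-4}}$.

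The hard part will be the second stage: one must genuinely produce an interval on which the approximating solutions satisfy \emph{uniform} spacetime bounds, so that Proposition \ref{stability} can be invoked with constants independent of $n$, and one must make sense of $f_k(\ub)$ for merely $\Dz$ data, where the conserved mass is no longer available and every estimate has to be carried out purely at the $\dot{H}^2$ level via Lemma \ref{consequences} and the Strichartz bookkeeping in Remark \ref{estimatespaces}.
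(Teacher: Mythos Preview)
Your proposal is correct and follows exactly the approach the paper itself outlines (and then omits as standard): solve first for $\mathbf{H}^2$ data via Theorem \ref{localexi}, then pass to $\Dz$ by approximating and invoking the stability Proposition \ref{stability}, citing \cite[Chapter 3]{killip3} for the details. Your identification of the uniform-bound step as the crux, and your derivations of (i)--(iv) from the construction, Proposition \ref{globalexicrit}/Corollary \ref{standblowup}, and Corollary \ref{smalldata}, are all in line with what the paper intends.
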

 We start by proving the existence of a critical solutions that is almost periodic modulo symmetries. 
 \subsection{Reduction to almost periodic solutions}
 
 We start this Section noticing that for any $0\leq K_0\leq K(\boldsymbol{\psi})$, the function
 $$
 L(K_0):=\sup\left\{S_I(\mathbf{u});\, \mathbf{u}:I\times\mathbb{R}^d\longrightarrow\mathbb{C}^l \hbox{ is a solution to \eqref{SISTB}  such that }\sup_{t\in I}K(\mathbf{u}(t))\leq K_0\right\}.
 $$
 Therefore, $L:[0,K(\boldsymbol{\psi})]\longrightarrow[0,\infty]$ is a nondecreasing function and since the ground states  are time independent we get  $L(K(\boldsymbol{\psi}))=\infty$, for any $\boldsymbol{\psi}\in\mathcal{G}$. Also,  from Corollary \ref{smalldata}, if $K_0$ is sufficiently small then we deduce that $L(K_0)\lesssim K_0^{\frac{d+4}{d-4}}$. Moreover, Lemma \ref{stability} yields that for any solution of \eqref{SISTB}, it is possible to find another solution with kinetic energy and scattering size close to the initial one; this implies the continuity of $L$. Hence, there must exist a \textit{critical energy}, denoted by $K_c$, such that
 \begin{equation*}\label{ps42}
 	L(K_0)\left\{\begin{array}{lc}
 		<\infty,& \mbox{if}\quad K_0<K_c,\\
 		=\infty,& \mbox{if}\quad K_0\geq K_c.
 	\end{array}\right.
 \end{equation*}
 In particular, if $\mathbf{u}:I\times\mathbb{R}^d\longrightarrow \mathbb{C}^l$ is a maximal solution such that  $\sup_{t\in I}K(\mathbf{u}(t))< K_c$, then, from Proposition \ref{standblowup}, $\mathbf{u}$ is global and
 $$
 S_{\mathbb{R}}(\mathbf{u})\leq L\left(\sup_{t\in\mathbb{R}}K(\mathbf{u}(t))\right)<\infty.
 $$
 The next result is essential to reach the goal of this subsection.
 
 \begin{proposition}[Palais-Smale condition]\label{palais}
 	Let $\mathbf{u}_n:I_n\times\mathbb{R}^d\longrightarrow\mathbb{C}^l$ be a sequence of solutions of \eqref{SISTB} such that
 	\begin{equation}\label{ps31}
 		\limsup_{n\rightarrow\infty}\left(\sup_{t\in I_n}\Vert \Delta\mathbf{u}_n(t)\Vert_{\mathbf{L}_x^2}^2\right)=K_c.
 	\end{equation}
 	Assume there is a sequence of times $(t_n)$ with $t_n\in I_n$ obeying
 	$$
 	\lim_{n\rightarrow\infty}S_{\geq t_n}(\mathbf{u}_n)=\lim_{n\rightarrow\infty} S_{\leq t_n}(\mathbf{u}_n)=\infty.
 	$$
 	Then, the sequence $(\mathbf{u}_n(t_n))$ has a subsequence that converges in  $\dot{\mathbf{H}}^2_x$ modulo symmetries. 
 \end{proposition}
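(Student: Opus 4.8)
The plan is to run the Kenig--Merle concentration-compactness scheme adapted to the biharmonic propagator, exactly as in \cite{MXZ,killip}; the one genuinely new point will be tracking the vector nonlinearities through Lemmas \ref{consequences} and \ref{gradsum}. First I would use the time-translation invariance of \eqref{SISTB} to reduce to $t_n\equiv 0$, so that $(\ub_n(0))$ is bounded in $\Dz$, $\limsup_n\sup_{t\in I_n}K(\ub_n(t))=K_c$, and $S_{[0,\sup I_n)}(\ub_n)=S_{(\inf I_n,0]}(\ub_n)=\infty$. Then I would invoke the linear profile decomposition in the radial class for the free biharmonic Schrödinger equation, applied componentwise (cf.\ \cite{MXZ}): along a subsequence, for each $J$ one writes $\ub_n(0)=\sum_{j=1}^{J}\ub_n^{j}+\wb_n^{J}$, where each $\ub_n^{j}$ is built from a fixed profile $\boldsymbol{\phi}^{j}\in\Dz$ by a linear flow $\mathbf{U}(t_n^{j})$ and a rescaling with parameter $\lambda_n^{j}>0$ (there is no spatial translation, by radiality), the pairs $(\lambda_n^{j},t_n^{j})$ are pairwise asymptotically orthogonal, and the remainder obeys $\limsup_n\|\wb_n^{J}\|_{\Zb(\R)}\to 0$ as $J\to\infty$ while $\sup_{n,J}\|\Delta\wb_n^{J}\|_{\Lb^2}<\infty$. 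Since $\mathbf{U}(t)$ acts as an isometry on $\Dz$, the decomposition carries the Pythagorean identity $\lim_n K(\ub_n(0))=\sum_{j=1}^{J}K(\boldsymbol{\phi}^{j})+\lim_n K(\wb_n^{J})$ and, via Lemma \ref{lemma22}(i) together with the orthogonality, an asymptotic decoupling of the potential energy $P$, and hence of $E$.

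Next I would attach to each profile its nonlinear profile $\vb^{j}:I^{j}\times\R^d\to\C^l$, namely the maximal-lifespan solution of \eqref{SISTB} with $\vb^{j}(-t_n^{j})-\mathbf{U}(t_n^{j})\boldsymbol{\phi}^{j}\to\mathbf{0}$ in $\Dz$ (a wave operator construction resting on the small-data theory of Corollary \ref{smalldata} when $|t_n^{j}|\to\infty$, and simply $\vb^{j}(0)=\boldsymbol{\phi}^{j}$ when $t_n^{j}\equiv 0$). Combining the decouplings of $K$ and of $E$ with \eqref{ps31} and the energy trapping furnished by the coercivity Lemma \ref{lema comp} (in the spirit of \cite{KM}), I would extract the dichotomy: either (a) a single profile survives, with $K(\boldsymbol{\phi}^{1})=K_c$ and $\|\wb_n^{1}\|_{\Dz}\to 0$; or (b) every nonlinear profile satisfies $\sup_{t\in I^{j}}K(\vb^{j}(t))<K_c$, so that by the very definition of $K_c$ each $\vb^{j}$ is global with $S_{\R}(\vb^{j})<\infty$, and, by Corollary \ref{smalldata} applied to all but finitely many indices, $\sum_{j}S_{\R}(\vb^{j})<\infty$.

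The heart of the matter will be to rule out case (b). I would form the approximate solution $\ub_n^{J}(t):=\sum_{j=1}^{J}\widetilde{\vb}^{j}_n(t)+\mathbf{U}(t)\wb_n^{J}$, where $\widetilde{\vb}^{j}_n$ denotes $\vb^{j}$ rescaled and time-shifted according to $(\lambda_n^{j},t_n^{j})$, and verify the hypotheses of Proposition \ref{stability}: scale invariance of the $\Zb$ norm, the decouplings, and the uniform bounds on $\wb_n^{J}$ give $\|\ub_n^{J}\|_{\Zb(\R)}$ and $\|\Delta\ub_n^{J}\|_{\Lb_t^\infty\Lb_x^2}$ bounded uniformly in $n$ and $J$, and it then remains to show that the error $e^{J}_{k,n}:=i\ak\partial_t u^{J}_{k,n}+\gk\Delta^2 u^{J}_{k,n}-f_k(\ub_n^{J})$ is small in $\Nb(\R)$, first letting $J\to\infty$ and then $n\to\infty$. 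This is precisely where the structure of the nonlinearity enters: one writes $f_k(\ub_n^{J})=\sum_{j}f_k(\widetilde{\vb}^{j}_n)+\big[f_k(\ub_n^{J})-\sum_{j}f_k(\widetilde{\vb}^{j}_n)\big]$ and bounds the bracketed mismatch by Lemma \ref{gradsum}, the ``cross'' terms decaying thanks to the asymptotic orthogonality of the scales $(\lambda_n^{j},t_n^{j})$, while the contribution of $\mathbf{U}(t)\wb_n^{J}$ is absorbed using $\|\wb_n^{J}\|_{\Zb}\to 0$ and the estimates of Lemma \ref{consequences} recalled in Remark \ref{estimatespaces}. Since also $\|\ub_n^{J}(0)-\ub_n(0)\|_{\Dz}\to 0$ and $\|U_k(t)(u^{J}_{k,n}(0)-u_{k,n}(0))\|_{W(\R)}\to 0$, Proposition \ref{stability} would give $S_{\R}(\ub_n)<\infty$ for $n$ large, contradicting $S_{[0,\sup I_n)}(\ub_n)=\infty$. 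I expect this step, and in particular the decoupling of the vector nonlinearity along orthogonal scales, to be the main technical obstacle.

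Consequently case (a) must hold, i.e.\ $\ub_n(0)=\widetilde{\boldsymbol{\phi}}^{1}_n+\wb_n^{1}$ with $\|\wb_n^{1}\|_{\Dz}+\|\wb_n^{1}\|_{\Zb(\R)}\to 0$. If $t_n^{1}\to+\infty$, then $\mathbf{U}(t)\boldsymbol{\phi}^{1}$ has small $\Zb$ norm on $[0,\infty)$, so by the local theory and Proposition \ref{stability} one would get $S_{[0,\infty)}(\ub_n)<\infty$ for $n$ large, contrary to hypothesis; the case $t_n^{1}\to-\infty$ is excluded the same way using $S_{(\inf I_n,0]}(\ub_n)=\infty$. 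Hence $t_n^{1}\equiv 0$ along a subsequence, and undoing the rescaling by $\lambda_n^{1}$ (an isometry of $\Dz$) one obtains a subsequence of $(\ub_n(0))$ converging strongly in $\Dz$ modulo the scaling symmetry \eqref{scaling}, which is the assertion of the proposition.
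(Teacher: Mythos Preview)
Your proposal is correct and follows essentially the same route as the paper: the paper does not give a detailed proof either, but simply refers to \cite[Proposition 3.1]{killip} and \cite[Section 6]{MXZ}, noting that the only new ingredients are the passage to vector-valued functions in $\Dz$ and the use of Lemma \ref{gradsum} to control the cross terms in the nonlinearity---exactly the points you single out. The one cosmetic discrepancy is that you invoke the coercivity/energy-trapping statements (Lemmas \ref{coercivity1}, \ref{qCE}, \ref{qET}) inside the Palais--Smale argument, whereas the paper records them in the following subsection; logically nothing changes, but be aware of the ordering if you cite them.
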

 \begin{proof}
 	We will omit the details since the proof of this result is essentially the same as the one in \cite[Proposition 3.1]{killip} (see also, \cite[Proposition 5.6]{killip3} and \cite[Proposition 7.2, page 263]{visan}). The only differences is that now we need to work with vector functions and in $\Dz$ instead of ${\dot{\mathbf{H}}}^1$. We also need Lemma \ref{gradsum} to work with the nonlinearities part in the proof.  One can also check a fourth order Schrödinger equation version in \cite[Section 6]{MXZ}.
 \end{proof}
 The main result of this subsection is the following.
  \begin{theorem}\label{reduction}
 	Let $5\leq d\leq 16$ be such that Theorem \ref{spacetimebounds} fails. Then there exists a maximal-lifespan solution $\ub:I\times\R^d\rightarrow \C^l$ to \eqref{SISTB} such that 
 	\begin{equation}
 		\sup_{t\in I} K(\ub(t))<K(\psib),
 	\end{equation}
 	$\ub$ is almost periodic modulo symmetries and blows-up in time. In addition, $\ub$ has minimal kinetic energy among all blow-up solutions, that is, 
 	$$
 	\sup_{t\in I}K(\ub(t))\leq \sup_{t\in J} K(\vb),
 	$$
 	for all maximal-lifespan solutions $\vb:I\times\R^d\rightarrow\C^l$ that blows-up at least in one direction.
 \end{theorem}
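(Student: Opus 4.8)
The plan is to carry out the standard Keraani-type construction of a minimal-kinetic-energy blow-up solution, as in \cite{killip,MXZ}, the only nonstandard ingredient being the vectorial, fourth-order Palais--Smale compactness already secured in Proposition \ref{palais}.

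First I would pin down the critical threshold. With $L$ and $K_c$ as introduced above, note $K_c\le K(\psib)$ always, because the ground states are stationary and hence $L(K(\psib))=\infty$. If Theorem \ref{spacetimebounds} fails there is a solution $\vb$ on some interval $J$ with $E(\vb_0)<E(\psib)$, $K(\vb_0)<K(\psib)$ and $S_J(\vb)=\infty$; the energy-trapping property (a consequence of the coercivity estimates for $K$ and $P$ together with Lemma \ref{lema comp}) then gives $\sup_{t\in J}K(\vb(t))<K(\psib)$, so $K_c\le\sup_{t\in J}K(\vb(t))<K(\psib)$, while $K_c>0$ by Corollary \ref{smalldata}. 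Since $L(K_c)=\infty$, for each $n$ I would pick a solution $\ub_n\colon I_n\times\R^d\to\C^l$ with $\sup_{t\in I_n}K(\ub_n(t))\le K_c$ and $S_{I_n}(\ub_n)\ge n$; after a standard balancing together with a time translation (using the time-translation symmetry of \eqref{SISTB}) I may assume $0\in I_n$ and $\lim_n S_{\ge 0}(\ub_n)=\lim_n S_{\le 0}(\ub_n)=\infty$. Since $S_{I_n}(\ub_n)\to\infty$ forces $\limsup_n\sup_{t\in I_n}\|\Delta\ub_n(t)\|_{\Lb_x^2}^2=K_c$ (otherwise $L$ of a value strictly below $K_c$ would bound all the scattering sizes), I pass to a subsequence realizing this as a limit. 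Proposition \ref{palais} then yields, after a further subsequence and after rescaling and translating in space by the symmetries \eqref{scaling}, \eqref{obs34ii}, some $\ub_0\in\Dz$ with $\ub_n(0)\to\ub_0$ in $\Dz$; let $\ub\colon I\times\R^d\to\C^l$ be the maximal-lifespan solution with $\ub(0)=\ub_0$.

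The heart of the argument is to show that $\ub$ blows up in both time directions with kinetic energy exactly $K_c$. By weak lower semicontinuity, $K(\ub_0)\le\liminf_n K(\ub_n(0))\le K_c<K(\psib)$. Suppose $\ub$ did not blow up forward in time, so $S_{[0,\sup I)}(\ub)<\infty$; then by the blow-up criterion (Corollary \ref{standblowup}, Proposition \ref{globalexicrit}) $\ub$ is global forward, and applying the stability result Proposition \ref{stability} with $\ub$ as the exact comparison solution (so $e_k=0$) on each compact subinterval of $[0,\infty)$, verifying \eqref{P62} from $\ub_n(0)\to\ub_0$ in $\Dz$ via Strichartz, one would obtain $S_{\ge 0}(\ub_n)\lesssim 1+S_{[0,\infty)}(\ub)<\infty$ for all large $n$, contradicting $S_{\ge 0}(\ub_n)\to\infty$. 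The same reasoning backward in time shows $\ub$ blows up in both directions, hence $S_{\ge 0}(\ub)=S_{\le 0}(\ub)=\infty$; the definition of $K_c$ then forces $\sup_{t\in I}K(\ub(t))\ge K_c$, and with the reverse inequality $\sup_{t\in I}K(\ub(t))=K_c<K(\psib)$, as required.

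It remains to upgrade this to almost periodicity and minimality. For any $(\tau_m)\subset I$, the translated solutions $\ub(\cdot+\tau_m)$ again satisfy the hypotheses of Proposition \ref{palais} (since $\ub$ blows up in both directions and $\sup_t K(\ub(t))=K_c$), so $(\ub(\tau_m))$ has a subsequence converging in $\Dz$ modulo the scaling--translation group; hence $\{\ub(t):t\in I\}$ is precompact in $\Dz$ modulo symmetries, from which one extracts in the usual way---using Remark \ref{almost periodic}---the frequency scale $N\colon I\to\R^+$, the spatial center $x\colon I\to\R^d$ and the compactness modulus $C\colon\R^+\to\R^+$ of Definition \ref{QPMS}. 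For minimality, if $\vb\colon J\times\R^d\to\C^l$ is any maximal-lifespan solution blowing up in at least one time direction, then $S_{J'}(\vb)=\infty$ on some half-line $J'\subset J$, so by the definition of $K_c$ one has $\sup_{t\in J}K(\vb(t))\ge K_c=\sup_{t\in I}K(\ub(t))$. I expect the main obstacle to be the bookkeeping in the third paragraph: running Proposition \ref{stability} on the correct sequence of compact time intervals and reconciling the finite lifespans $I_n$ with $I$, so that ``$\ub$ scatters'' is transferred back to the untruncated solutions $\ub_n$; everything else is routine once Propositions \ref{palais} and \ref{stability} are available.
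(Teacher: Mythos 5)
Your proposal reconstructs exactly the Keraani-type concentration-compactness/minimal-element argument that the paper defers to in its one-line ``proof'' (the paper simply cites \cite[Section 3.2]{killip} and \cite[Theorem 6.1]{MXZ} and omits all details). Your sketch is correct in outline and lines up with what those references do, including the use of Proposition \ref{palais}, the stability-based contradiction to show the limit solution blows up in both time directions, and the precompactness-modulo-symmetries characterization of almost periodicity.

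One point you state but do not actually establish is the ``reverse inequality'' $\sup_{t\in I}K(\ub(t))\le K_c$: from the convergence $\ub_n(0)\to\ub_0$ you only extract $K(\ub_0)\le K_c$, which controls the kinetic energy at time $t=0$ alone. To get the bound at every $t\in I$ you need to run Proposition \ref{stability} once more in the \emph{other} direction---not only as the tool that contradicts $S_{[0,\sup I)}(\ub)<\infty$, but also to transfer the bound $\sup_{t\in I_n}K(\ub_n(t))\le K_c+o(1)$ to $\ub$ on every compact subinterval $I'\subset I$ (using the $(q,r)=(\infty,2)$ B-admissible pair in \eqref{P63} and invariance of $K$ under the scaling/translation symmetries), and then take the supremum over $I'$. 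Without this step, the conclusion $\sup_{t\in I}K(\ub(t))=K_c$ does not follow from what you wrote; it is part of the ``bookkeeping'' you flag at the end, but it is essential and worth writing out. Everything else is sound.
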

 \begin{proof}
 	Since the nonlinearities plays no role in this proof we will omit the details. The reader can check the original proof, for the NLS case, in \cite[Section 3.2]{killip} or the BNLS version in \cite[Theorem 6.1]{MXZ}.
 	 \end{proof}

 \subsection{Coercivities estimates}
In this subsection we establish some adapted versions of the coercivity lemmas appearing in \cite{KM}.

\begin{lemma}[Coercivity I]\label{coercivity1} Assume that $\mathbf{u}_0\in\dot{\mathbf{H}}_{x}^{2}(\mathbb{R}^d)$ and let $\mathbf{u}$ be a solution of \eqref{SISTB} with maximal existence interval $I$. Let $\boldsymbol{\psi}\in\mathcal{G}$ be a ground state. Suppose that 
	$	\E(\mathbf{u}_0)<(1-\tilde{\delta})\E(\boldsymbol{\psi}) $ and $K(\mathbf{u}_0)<K(\boldsymbol{\psi}),$
		then there exists $\tilde{\delta}_1=\tilde{\delta}_1(\tilde{\delta}) $ such that
		$$
		K(\mathbf{u}(t))<(1-\tilde{\delta}_1)K(\boldsymbol{\psi}),
		$$
		for all $t\in I$. 
		
\end{lemma}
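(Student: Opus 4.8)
The strategy is to combine conservation of energy with the sharp Sobolev-type inequality \eqref{optimal} and then invoke the abstract trapping result, Lemma \ref{lema comp}. Since \ref{H3}--\ref{H5} hold, the energy is conserved, so $E(\mathbf{u}(t))=E(\mathbf{u}_0)$ for every $t\in I$. Using $E=\tfrac12 K-P$ from \eqref{energy} together with $P(\mathbf{u}(t))\le C_{\rm opt}\,K(\mathbf{u}(t))^{\frac{d}{d-4}}$ (which is \eqref{optimal}, trivially true where $P\le 0$), we obtain, for all $t\in I$,
\[
E(\mathbf{u}_0)\;=\;\tfrac12 K(\mathbf{u}(t))-P(\mathbf{u}(t))\;\ge\;\tfrac12 K(\mathbf{u}(t))-C_{\rm opt}\,K(\mathbf{u}(t))^{\frac{d}{d-4}}.
\]

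Now set $q:=\tfrac{d}{d-4}>1$, $a:=2E(\mathbf{u}_0)$, $b:=2C_{\rm opt}>0$, $f(r):=a-r+br^{q}$, and $G(t):=K(\mathbf{u}(t))$. Since $\mathbf{u}\in\mathbf{C}_t\dot{\mathbf{H}}^2_x$, the function $G$ is nonnegative and continuous on the open interval $I\ni 0$, and the displayed bound is exactly $f(G(t))\ge 0$ on $I$, because $f(r)=2\big(E(\mathbf{u}_0)-\tfrac12 r+C_{\rm opt}r^{q}\big)$. It remains to identify the threshold $\gamma=(bq)^{-1/(q-1)}$ appearing in Lemma \ref{lema comp} with $K(\boldsymbol{\psi})$. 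As $f$ is strictly convex, $\gamma$ is its unique minimizer; and writing $\Phi(r):=\tfrac12 r-C_{\rm opt}r^{q}$ so that $f=2\big(E(\mathbf{u}_0)-\Phi\big)$, the point $\gamma$ is also the global maximizer of $\Phi$, which by the computation carried out in the proof of Theorem \ref{teoblowup} (Case 2) equals $K(\boldsymbol{\psi})$, with $\Phi(K(\boldsymbol{\psi}))=E(\boldsymbol{\psi})$. Hence $\gamma=K(\boldsymbol{\psi})$. Moreover, the elementary identity $f(\gamma)=a-\big(1-\tfrac1q\big)\gamma$ yields $\big(1-\tfrac1q\big)\gamma=\tfrac4d\,K(\boldsymbol{\psi})=2E(\boldsymbol{\psi})$, using $K(\boldsymbol{\psi})=\tfrac d2 E(\boldsymbol{\psi})$ from \eqref{pohozaevident}.

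With these identifications the hypothesis $E(\mathbf{u}_0)<(1-\tilde{\delta})E(\boldsymbol{\psi})$ becomes $a<(1-\tilde{\delta})\big(1-\tfrac1q\big)\gamma$, and $K(\mathbf{u}_0)<K(\boldsymbol{\psi})$ becomes $G(0)<\gamma$. Applying Lemma \ref{lema comp}(i) — with $\tilde{\delta}$ small, which is no loss since a smaller $\tilde{\delta}$ only weakens the hypothesis — we obtain $\tilde{\delta}_1=\tilde{\delta}_1(\tilde{\delta})>0$ such that $G(t)<(1-\tilde{\delta}_1)\gamma$ for all $t\in I$, that is, $K(\mathbf{u}(t))<(1-\tilde{\delta}_1)K(\boldsymbol{\psi})$, which is the claim. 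No step is genuinely hard; the only point requiring attention is the constant bookkeeping in the identification $\gamma=K(\boldsymbol{\psi})$ and the observation that the hypothesis $K(\mathbf{u}_0)<K(\boldsymbol{\psi})=\gamma$ places $G(0)$ in the left well of $f$, so that the connectedness argument internal to Lemma \ref{lema comp} confines $G(t)$ to that well throughout $I$.
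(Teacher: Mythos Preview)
Your proof is correct and follows essentially the same approach as the paper: both combine energy conservation with the sharp Sobolev-type inequality \eqref{optimal} to obtain $f(K(\mathbf{u}(t)))\ge 0$ for the choice $G(t)=K(\mathbf{u}(t))$, $b=2C_{\rm opt}$, $q=\tfrac{d}{d-4}$, identify $\gamma=(bq)^{-1/(q-1)}=K(\boldsymbol{\psi})$, and then invoke Lemma \ref{lema comp}(i). Your version is in fact more carefully written, since you explicitly verify the hypothesis $a<(1-\tilde{\delta})(1-\tfrac1q)\gamma$ of Lemma \ref{lema comp} via the Pohozaev identity $(1-\tfrac1q)\gamma=\tfrac{4}{d}K(\boldsymbol{\psi})=2E(\boldsymbol{\psi})$, whereas the paper leaves this step implicit.
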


\begin{proof}
	From the conservation of the energy  and \eqref{sobolevtype}, we deduce
	\begin{equation}\label{q28}
		K(\mathbf{u}(t))\leq E(\mathbf{u}_0)+C_{\rm opt}K(\mathbf{u}(t))^{\frac{d}{d-4}},\quad \forall t\in I.
	\end{equation}
	Let $G(t)=K(\mathbf{u}(t))$, $a=E(\mathbf{u}_0)$, $b=2C_{\rm opt}$ and $q={\frac{d}{d-4}}$ in Lemma \ref{lema comp}. Using \eqref{optimal}, we get
	$$
	\gamma=(bq)^{-\frac{1}{q-1}}=\left(\frac{d-4}{2d}\frac{1}{C_{\rm opt}}\right)^{\frac{d-4}{4}}=K(\psib).
	$$ Lemma \ref{lema comp} implies the result.
\end{proof}

\begin{lemma}[Coercivity II]\label{qCE} 
	Under hypothesis of Lemma \ref{coercivity1} we have
		If $K(\mathbf{u}_0)<K(\boldsymbol{\psi}),$ 	then there exists $\delta'=\delta'(\tilde{\delta})>0$ such that
		\begin{equation}\label{M13}
			K(\mathbf{u}(t))-\frac{2d}{d-4}P(\mathbf{u}(t))\geq \delta'K(\mathbf{u}(t)),
		\end{equation} 
		for all $t\in I$. Moreover,  $E(\ub)\geq 0$.
		
\end{lemma}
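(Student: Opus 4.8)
The idea is to feed the uniform kinetic bound produced by Coercivity~I into the sharp Sobolev-type inequality \eqref{optimal}, exploiting the normalization that relates the optimal constant $C_{\rm opt}$ to $K(\boldsymbol{\psi})$. First I would record the constant identity. Since a ground state realizes equality in \eqref{optimal} (it minimizes $J$, as shown in the proof of Theorem~\ref{ESGS}), we have $P(\boldsymbol{\psi})=C_{\rm opt}K(\boldsymbol{\psi})^{\frac{d}{d-4}}$; combining this with \eqref{Ppositive}, which gives $P(\boldsymbol{\psi})=\frac{d-4}{2d}K(\boldsymbol{\psi})$, yields
\begin{equation*}
\frac{2d}{d-4}\,C_{\rm opt}\,K(\boldsymbol{\psi})^{\frac{4}{d-4}}=1 .
\end{equation*}

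Next, Coercivity~I (Lemma~\ref{coercivity1}) applies under the present hypotheses and furnishes $\tilde{\delta}_1=\tilde{\delta}_1(\tilde{\delta})>0$ with $K(\mathbf{u}(t))<(1-\tilde{\delta}_1)K(\boldsymbol{\psi})$ for all $t\in I$. Applying \eqref{optimal} to $\mathbf{u}(t)$ and then this bound together with the identity above,
\begin{equation*}
\frac{2d}{d-4}P(\mathbf{u}(t))\leq \frac{2d}{d-4}\,C_{\rm opt}\,K(\mathbf{u}(t))^{\frac{4}{d-4}}\,K(\mathbf{u}(t))\leq (1-\tilde{\delta}_1)^{\frac{4}{d-4}}K(\mathbf{u}(t)),
\end{equation*}
so that
\begin{equation*}
K(\mathbf{u}(t))-\frac{2d}{d-4}P(\mathbf{u}(t))\geq \Big(1-(1-\tilde{\delta}_1)^{\frac{4}{d-4}}\Big)K(\mathbf{u}(t)) ,
\end{equation*}
and one sets $\delta':=1-(1-\tilde{\delta}_1)^{\frac{4}{d-4}}>0$, which depends only on $\tilde{\delta}$ (and $d$). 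This is \eqref{M13}.

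For the nonnegativity of the energy, the previous display gives $\frac{2d}{d-4}P(\mathbf{u}(t))\leq(1-\delta')K(\mathbf{u}(t))$, hence $P(\mathbf{u}(t))\leq \frac{d-4}{2d}K(\mathbf{u}(t))$; since $\frac{d-4}{2d}<\frac12$ we conclude
\begin{equation*}
E(\mathbf{u}(t))=\tfrac12 K(\mathbf{u}(t))-P(\mathbf{u}(t))\geq \Big(\tfrac12-\tfrac{d-4}{2d}\Big)K(\mathbf{u}(t))=\tfrac{2}{d}K(\mathbf{u}(t))\geq 0 ,
\end{equation*}
with the case $K(\mathbf{u}(t))=0$ being trivial. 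There is no genuine obstacle here; the only point demanding care is the bookkeeping of $C_{\rm opt}$ against $K(\boldsymbol{\psi})$ through the exponent $\tfrac{4}{d-4}$ and the verification that the resulting $\delta'$ is independent of the solution, depending solely on $\tilde{\delta}$ via $\tilde{\delta}_1$.
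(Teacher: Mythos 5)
Your argument follows the paper's own proof essentially verbatim: feed Lemma~\ref{coercivity1} into the sharp inequality \eqref{optimal} via the normalization $\frac{2d}{d-4}\,C_{\rm opt}\,K(\boldsymbol{\psi})^{\frac{4}{d-4}}=1$, bound the ratio $K(\mathbf{u}(t))/K(\boldsymbol{\psi})$ by $1-\tilde{\delta}_1$, and deduce $E\geq 0$ by writing $E=\tfrac12 K-P$ as a nonnegative combination of $K$ and $K-\tfrac{2d}{d-4}P$. The only divergence is cosmetic bookkeeping: your exponent $\tfrac{4}{d-4}$ in $\delta'$ is the correct one (the paper's displayed $\tfrac{2}{d-4}$ is a slip), and you dispense with the paper's unnecessary case split $E(\ub)\geq E(\boldsymbol{\psi})$.
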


\begin{proof}
	 Using \eqref{optimal} Lemma \ref{lema comp} and Lemma \ref{coercivity1}, we deduce
	$$
	1-\frac{2d}{d-4}\frac{P(\mathbf{u}(t))}{K(\mathbf{u}(t))}\geq 1-\frac{2d}{d-4}C_{\rm opt}K(\mathbf{u}(t))^{\frac{4}{d-4}}=1-\left[\frac{K(\mathbf{u}(t))}{K(\boldsymbol{\psi})}\right]^{\frac{4}{d-4}}\geq 1-(1-\tilde{\delta}_1)^{\frac{2}{d-4}}=:\delta'.
	$$
	Multiplying both sides by $K(\mathbf{u}(t))$ we get the result. To prove that the energy in non negative, we observe that if $E(\ub)\geq E(\psib)$ the result is trivial since $E(\psib)>0$. Otherwise, since
	\[
	E(\ub)=K(\ub(t))-2P(\ub(t))=\frac{2}{d}K(\ub(t))+\frac{d-4}{d}\left[K(\ub(t))-\frac{2d}{d-4}P(\ub(t))\right],
	\]
	inequality \eqref{M13} implies that $E(\ub)\geq 0$.
\end{proof}

\begin{lemma}[Energy trapping]\label{qET} Let $\mathbf{u}$ be a solution of \eqref{SISTB} with maximal existence interval $I$ and initial data $\mathbf{u}_0$. If $E(\mathbf{u}_0)\leq (1-\delta)E(\boldsymbol{\psi})$ and $K(\mathbf{u}_0)\leq (1-\delta')K(\boldsymbol{\psi})$, then
	\begin{equation*}\label{g312}
		K(\mathbf{u}(t)) \sim E(\mathbf{u}(t)),\quad\forall t\in I.
	\end{equation*}
\end{lemma}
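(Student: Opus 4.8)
The statement is a uniform two-sided comparison $c_1 K(\ub(t))\le E(\ub(t))\le c_2 K(\ub(t))$ on $I$, so the plan is to establish the two inequalities separately, each time reducing to the two pieces of information already furnished by Lemmas \ref{coercivity1} and \ref{qCE}: a uniform sub-threshold bound on the kinetic energy, and the coercivity of $K-\tfrac{2d}{d-4}P$. First I would check that the hypotheses here meet those of Lemma \ref{coercivity1}: since $E(\psib)>0$, from $E(\ub_0)\le(1-\delta)E(\psib)$ we get $E(\ub_0)<(1-\tilde\delta)E(\psib)$ for any $\tilde\delta<\delta$, while $K(\ub_0)\le(1-\delta')K(\psib)<K(\psib)$. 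Hence Lemma \ref{coercivity1} gives $\tilde\delta_1=\tilde\delta_1(\delta)>0$ with $K(\ub(t))\le(1-\tilde\delta_1)K(\psib)$ for all $t\in I$, and Lemma \ref{qCE} gives, in particular, $K(\ub(t))-\tfrac{2d}{d-4}P(\ub(t))\ge 0$ and $E(\ub(t))\ge 0$ for all $t\in I$.

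For the lower bound $E(\ub(t))\gtrsim K(\ub(t))$ I would argue exactly as in the proof of Lemma \ref{qCE}: the definition \eqref{energy} lets one write $E(\ub(t))$ as a positive linear combination of $K(\ub(t))$ and of $K(\ub(t))-\tfrac{2d}{d-4}P(\ub(t))$, so the nonnegativity of the latter forces $E(\ub(t))\ge\tfrac2d K(\ub(t))$, i.e. $K(\ub(t))\le\tfrac d2 E(\ub(t))$, uniformly in $t\in I$.

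For the upper bound $E(\ub(t))\lesssim K(\ub(t))$ I would start from $E(\ub(t))\le\tfrac12 K(\ub(t))+|P(\ub(t))|$ and control the potential energy by a pure Sobolev estimate: by \eqref{fk23} and the embedding $\dot{\mathbf H}^2(\R^d)\hookrightarrow\mathbf L^{\frac{2d}{d-4}}(\R^d)$ (equivalently, by applying \eqref{optimal} to $\bbc\ub(t)\bbc$ and using \ref{H6} together with $K(\bbc\ub(t)\bbc)\le K(\ub(t))$) one has $|P(\ub(t))|\lesssim K(\ub(t))^{\frac{d}{d-4}}$. The key point — and really the only step that uses anything — is that this bound is superlinear in $K$; I would linearize it with the uniform sub-threshold bound from the first step, writing $K(\ub(t))^{\frac{d}{d-4}}=K(\ub(t))^{\frac{4}{d-4}}K(\ub(t))\le\bigl((1-\tilde\delta_1)K(\psib)\bigr)^{\frac{4}{d-4}}K(\ub(t))$, so that $|P(\ub(t))|\lesssim K(\ub(t))$ with a constant depending only on $d$, $l$, the $\gamma_k$ and $\tilde\delta_1$. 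Hence $E(\ub(t))\lesssim K(\ub(t))$, and together with the previous paragraph this yields $K(\ub(t))\sim E(\ub(t))$ uniformly on $I$.

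The argument is light once Lemmas \ref{coercivity1} and \ref{qCE} are in hand; the obstacle, such as it is, is conceptual rather than technical — one must notice that the comparison $E\lesssim K$ is not automatic from $E=\tfrac12 K-P$ (a superlinear perturbation of $\tfrac12 K$), and that it is precisely the uniform sub-threshold control $K(\ub(t))\le(1-\tilde\delta_1)K(\psib)$ that rescues it, so the hypothesis $K(\ub_0)\le(1-\delta')K(\psib)$ cannot be dropped. Finally I would record the degenerate case: if $\ub_0=\mathbf 0$ then $\ub\equiv\mathbf 0$ and both quantities vanish, while otherwise energy conservation gives $E(\ub(t))=E(\ub_0)\gtrsim K(\ub(t))>0$, so the comparison is between strictly positive quantities.
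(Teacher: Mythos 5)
Your proposal is correct and follows essentially the same route as the paper: the upper bound $E\lesssim K$ comes from controlling $|P|$ by the critical Sobolev inequality \eqref{optimal} and then linearizing the superlinear term $K^{d/(d-4)}$ via the uniform sub-threshold bound $K(\ub(t))\le(1-\tilde\delta_1)K(\psib)$ supplied by Lemma~\ref{coercivity1}, while the lower bound $E\gtrsim K$ comes from writing $E$ as a positive combination of $K$ and $K-\tfrac{2d}{d-4}P$ and invoking the coercivity from Lemma~\ref{qCE}. The only cosmetic difference is that you use bare nonnegativity of $K-\tfrac{2d}{d-4}P$ (giving the constant $\tfrac2d$) where the paper keeps the full coercive gain $\delta'$; both yield the two-sided comparison, and your version of the algebraic decomposition with coefficients $\tfrac2d$ and $\tfrac{d-4}{2d}$ is the one consistent with the definition $E=\tfrac12 K-P$ in \eqref{energy}.
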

\begin{proof}
	By \eqref{optimal} and $E(\mathbf{u}_0)\leq (1-\delta)E(\boldsymbol{\psi})$ we obtain
	\begin{equation*}
		\begin{split}
			E(\mathbf{u}(t))&\leq K(\mathbf{u}(t))+2|P(\mathbf{u}(t))|\\
			&\leq K(\mathbf{u}(t))+ 2C_{\rm opt}|K(\mathbf{u}(t))|^{\frac{d}{d-4}}\\
			&\leq \left(1+2C_{\rm opt}[(1-\tilde{\delta}_1)K(\boldsymbol{\psi})^{\frac{4}{d-4}}]\right)K(\mathbf{u}(t)).
		\end{split}
	\end{equation*}
	On the other hand, 
	\begin{equation*}
		\begin{split}
			E(\mathbf{u}(t))&\geq \frac{2}{d}K(\mathbf{u}(t))+\frac{d-2}{d}[K(\mathbf{u}(t))-\frac{2d}{d-2}P(\mathbf{u}(t))]\\
			&\geq \frac{2}{d}K(\mathbf{u}(t))+\frac{d-2}{d}\delta' K(\mathbf{u}(t))\\
			&=\frac{2}{d}\left(1+\frac{d-2}{d}\delta'\right)K(\mathbf{u}(t)).
		\end{split}
	\end{equation*}
	Combining both inequalities, we get the result.
\end{proof}

\subsection{The enemies}

The next step to achieve our goal is to prove that the solution $\ub_c$ obtained in Theorem \ref{reduction} cannot exist. In order to do this, we will see that this solution satisfy some extra properties with respect to the scale functions $N(t)$. The following result was first obtained in \cite{killip2} and \cite{killip} for the Schr\"odinger equation and in \cite{MXZ} proved a similar result for the fourth order Shcrödinger equation. The proof does not rely on the specific form of the nonlinearity, so will be omitted. The reader can check the details on \cite[Theorem 1.17]{killip} ou \cite[Theorem 7.1]{MXZ}. The result is the following
\begin{proposition}[The enemies]\label{qP114} Suppose that Theorem \ref{spacetimebounds} fails. Then there exists a maximal solution $\mathbf{u}_c:I_c\times\mathbb{R}^d\longrightarrow\mathbb{C}^l$, which is almost periodic modulo symmetries and satisfies
	\begin{equation}\label{q114}
		S_{I_c}(\mathbf{u}_c)=\infty\quad \hbox{and} \quad \sup_{t\in I_c}K(\mathbf{u}_c(t))<K(\boldsymbol{\psi}).
	\end{equation}
	Moreover, the time interval $I_c$ and the scale function $N(t)$ satisfy one of the three following scenarios:
	\begin{itemize}
		\item[(i)] We have $|\inf I_c|<\infty$ or $|\sup I_c|<\infty$;\\
		\item[(ii)] We have $I_c=\mathbb{R}$ and
		$$
		N(t)=1,\quad \forall t\in \mathbb{R};
		$$
		\item[(iii)] We have $I_c=\mathbb{R}$ and
		$$
		\inf_{t\in\R}N(t)\geq 1,\quad \hbox{and}\quad \limsup_{t\rightarrow\infty}N(t)=\infty.
		$$
	\end{itemize}
	
\end{proposition}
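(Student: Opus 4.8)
The starting point is Theorem~\ref{reduction}. Under the assumption that Theorem~\ref{spacetimebounds} fails it already produces a maximal-lifespan solution $\ub_c:I_c\times\R^d\to\C^l$ that is almost periodic modulo symmetries, blows up in time (hence $S_{I_c}(\ub_c)=\infty$), satisfies $\sup_{t\in I_c}K(\ub_c(t))<K(\psib)$, and has minimal kinetic energy among all blow-up solutions; since a solution with $\sup_t K$ below the critical threshold cannot blow up, in fact $\sup_{t\in I_c}K(\ub_c(t))=K_c$. Thus \eqref{q114} is automatic, and the entire content of the proposition is to arrange, using the scaling \eqref{scaling} together with the time-translation and time-reversal symmetries of \eqref{SISTB}, that the pair $(I_c,N)$ realises one of the three scenarios. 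I would fix $\ub_c$ and work only with its frequency scale function $N$, using throughout that $N$ is defined only up to comparable functions (replacing $N$ by a comparable $\widetilde N$ is harmless after enlarging the compactness modulus $C$).

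The one technical input I would set up first is the \emph{local constancy of $N$}: there is $\delta_0>0$, depending on $\ub_c$, such that $[t-\delta_0 N(t)^{-4},t+\delta_0 N(t)^{-4}]\subset I_c$ and $N(s)\sim N(t)$ whenever $|s-t|\le\delta_0 N(t)^{-4}$. This follows from the local theory and stability, Theorem~\ref{localexi} and Proposition~\ref{stability}: the normalised orbit $\{\,N(t)^{-\frac{d-4}{2}}\ub_c(t,x(t)+\cdot/N(t)):t\in I_c\,\}$ is precompact in $\dot{\Hb}^2$ (this is exactly almost periodicity, cf.\ Remark~\ref{almost periodic}), each such profile launches a local solution on a time interval of definite length, and compactness upgrades this to a uniform length, which rescales back to the claim. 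As by-products one obtains that $\int_J N(t)^4\,dt=\infty$ on every half-infinite subinterval $J\subset I_c$ on which $\ub_c$ blows up, and that $N(t)\to\infty$ as $t$ approaches a finite endpoint of $I_c$.

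With this in hand the argument splits. \textbf{Case~1: $I_c\neq\R$.} Then $|\inf I_c|<\infty$ or $|\sup I_c|<\infty$, which is exactly scenario (i). \textbf{Case~2: $I_c=\R$.} If $N$ is bounded above and below on $\R$, then replacing $N$ by the constant $\inf_{\R}N$ (legitimate after enlarging $C$, since $1/N(t)\lesssim1$ controls the spatial tail and $N(t)\lesssim1$ the frequency tail) and then rescaling by \eqref{scaling} so this constant equals $1$ puts us in scenario (ii). Otherwise $N$ is unbounded above or has $\inf_{\R}N=0$; by local constancy this ``bad'' behaviour can occur only as $t\to+\infty$ or as $t\to-\infty$, and after a time reversal we may assume it occurs as $t\to+\infty$. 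The remaining point — and this is the heart of the matter — is to exclude $\inf_{t\in\R}N(t)=0$ (unless one is already in scenario (ii)): given $t_n\to+\infty$ with $N(t_n)\to0$, rescale $\ub_c$ about $t_n$ at scale $1/N(t_n)\to\infty$; the resulting solutions are global and blow up in both time directions about $t=0$, so Proposition~\ref{palais} extracts a subsequence whose data converge in $\dot{\Hb}^2$ modulo symmetries, yielding a new minimal blow-up solution with normalised frequency $1$ at time $0$. Comparing kinetic energies shows it is again minimal, and confronting its frequency scale with the ``no-waste'' Duhamel representation at $\pm\infty$ (valid for almost periodic blow-up solutions) produces a contradiction, or collapses it into scenario (i)/(ii). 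Once $\inf_{\R}N(t)>0$ is known, a final rescaling normalises $\inf_{\R}N(t)=1$; since by local constancy $\sup_{\R}N=\infty$ forces $\limsup_{t\to+\infty}N(t)=\infty$ (after the time reversal already performed), we land in scenario (iii).

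I expect the normalisation of $N$ — concretely, excluding the degenerate regimes $\inf_{\R}N=0$ and ``double-cascade'' behaviour — to be the main obstacle; the remainder is bookkeeping with the symmetries. Crucially this procedure does not see the precise structure of the $f_k$: it uses only the local and stability theory and the Palais--Smale compactness of Proposition~\ref{palais}, all available for our system. It is carried out in full in \cite[Theorem 1.17]{killip} and, in the fourth-order setting, in \cite[Theorem 7.1]{MXZ}, and transfers to \eqref{SISTB} essentially verbatim.
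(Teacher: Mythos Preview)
Your proposal is correct and follows precisely the route the paper takes: the paper omits the argument entirely, noting that the nonlinearities play no role and referring to \cite[Section 3.2]{killip} and \cite[Theorem 6.1]{MXZ}, which is exactly the local-constancy-of-$N$ plus case-splitting scheme you outline (and you cite the same sources). Your sketch in fact supplies more detail than the paper does; there is nothing to correct.
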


In the literature,(see, for instance, \cite{killip}) the three scenarios are known, respectively, as \textit{finite-time blow-up}, \textit{soliton-like solution} and \textit{low-to-high frequency cascade}. The whole idea is to prove that none of this kind of solutions can exist and then, arrive at a contradiction with the existence of $\ub_c$. The first result in this direction is to preclude the existence of finite-time blow-up solution, but before we state the result, we need the following Lemma.

\begin{lemma}\label{MXZ 410}
	Let $\ub:I\times\R^d\rightarrow\C^l$ be a maximal-lifespan solution to \eqref{SISTB} that is almost periodic modulo symmetries with frequency scale function $N:I\rightarrow\R^+$. If $\sup I<\infty$, then
	\begin{equation}\label{bu413}
		\liminf_{t\nearrow \sup I}N(t)=\infty.
	\end{equation}
	A similar results holds if $|\inf I|<\infty$.
\end{lemma}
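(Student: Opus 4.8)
The claim is the standard fact (see e.g. Killip–Visan type arguments) that an almost periodic solution which blows up in \emph{finite} time must have its frequency scale function $N(t)$ escaping to $+\infty$ along the blow-up time. I would argue by contradiction: suppose $\sup I = T_* < \infty$ but $\liminf_{t\nearrow T_*} N(t) < \infty$. Then there is a sequence $t_n \nearrow T_*$ with $N(t_n)$ bounded, say $N(t_n)\to N_0 \in [0,\infty)$ (passing to a subsequence). The plan is to use the compactness coming from almost periodicity modulo symmetries to extract a nontrivial strong limit of $\ub(t_n)$ in $\dot{\mathbf H}^2_x$ (after applying the symmetries), and then use the local theory — specifically Theorem \ref{localexi} together with the global existence criterion Proposition \ref{globalexicrit} / the blow-up criterion Corollary \ref{standblowup} — to contradict the fact that $T_*$ is finite.

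**Key steps.** First I would record the translation in time: since $\ub$ is almost periodic modulo symmetries with parameters $N(t), x(t)$, for each $t_n$ the rescaled/translated functions
\[
\vb_n(x) := N(t_n)^{-\frac{d-4}{2}}\,\ub\!\left(t_n,\; \frac{x}{N(t_n)} + x(t_n)\right)
\]
lie in a precompact subset of $\dot{\mathbf H}^2_x(\R^d)$ (this is exactly the content of Definition \ref{QPMS} combined with Remark \ref{almost periodic}). Hence, up to a subsequence, $\vb_n \to \vb_\infty$ strongly in $\dot{\mathbf H}^2_x$, and $\vb_\infty \neq \mathbf 0$ because the $\dot{\mathbf H}^2_x$ (equivalently $\mathbf L^{2d/(d-4)}_x$) norms are bounded below — a solution that blows up cannot have vanishing kinetic energy, e.g. by the small-data theory Corollary \ref{smalldata}. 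Second, because $N(t_n)$ is bounded (this is where the hypothesis $\liminf N < \infty$ is used), the rescaling factors are harmless: one may as well assume $N(t_n)\equiv N_0$ and, after the spatial translations, that $\ub(t_n)$ itself converges in $\dot{\mathbf H}^2_x$ to some $\ub_\infty$ (if $N_0=0$ one must first argue this case away, since $N_0=0$ would force the solution to spread out and have small kinetic energy on a fixed ball, again contradicting blow-up via the small-data / local theory). Third, I would solve \eqref{SISTB} with data $\ub_\infty$ at time $T_*$: the local existence Theorem \ref{localexi} gives a solution on some interval $(T_* - \epsilon_0, T_* + \epsilon_0)$ with a uniform lifespan $\epsilon_0 = \epsilon_0(\|\ub_\infty\|)$. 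Fourth, I invoke the stability result Proposition \ref{stability}: since $\ub(t_n) \to \ub_\infty$ in $\dot{\mathbf H}^2_x$ and $t_n \to T_*$, for $n$ large the solution with data $\ub(t_n)$ at time $t_n$ exists on $[t_n, t_n + \epsilon_0/2]$ with bounded spacetime norm. But by uniqueness this is just $\ub$ itself, and $t_n + \epsilon_0/2 > T_*$ for $n$ large, so $\ub$ extends past $T_*$ with $S_{[t_n, T_*]}(\ub)<\infty$ — contradicting $T_* = \sup I$ being the maximal time (via Corollary \ref{standblowup}, which says finite maximal time forces infinite scattering size, which in turn is incompatible with the finite spacetime bound just obtained on a neighborhood of $T_*$). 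The case $|\inf I| < \infty$ is identical after the time reversal symmetry.

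**Main obstacle.** The delicate point is ruling out the degenerate scaling scenario $N(t_n) \to 0$ (or, in the rescaled picture, controlling what happens when the profile $\vb_\infty$ has been obtained at an ``infinite'' scale). Concretely, one has to show that if $N(t_n)$ were to tend to $0$, then almost periodicity would force $\|\Delta \ub(t_n)\|_{\mathbf L^2_x}$ concentrated at frequencies $\lesssim C(\eta) N(t_n) \to 0$, and combined with the $\dot{\mathbf H}^2_x$ bound this pushes $\ub(t_n)$ toward low frequency; one then uses a Bernstein/Sobolev argument plus the small-data global theory (Corollary \ref{smalldata}) to conclude the kinetic energy is eventually below the small-data threshold, so the solution would be global, contradicting $\sup I < \infty$. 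Making this rigorous requires a careful interplay between the two conditions in Definition \ref{QPMS} and is the only step that is not essentially a citation; everything else is an application of the local theory and stability already established. This is why the proof in the literature (e.g. \cite[Theorem 4.10]{MXZ} or the analogous statement in \cite{killip}) is short once that point is handled, and I would organize the writeup to isolate exactly this dichotomy $N_0 > 0$ versus $N_0 = 0$.
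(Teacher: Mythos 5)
The paper itself does not give a proof: it cites \cite{killip} (Theorem 5.1) and \cite[Corollary~3.7]{killip2} and moves on. Your overall strategy — contradiction, extraction of a convergent profile via almost periodicity, then local well-posedness plus the stability result Proposition~\ref{stability} to push the solution past $T_*$ — is exactly the argument in those references, and it is the right one.

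There is, however, a genuine flaw in how you handle the $N_0=0$ case, and also in the ``Main obstacle'' paragraph. You propose to rule out $N(t_n)\to 0$ by arguing that the solution ``spreads out and has small kinetic energy,'' and later that almost periodicity pushes the $\dot H^2$ mass to low frequencies, whence ``the kinetic energy is eventually below the small-data threshold.'' This is false in the energy-critical setting: the $\dot{\mathbf H}^2_x$ norm is \emph{invariant} under the scaling \eqref{scaling}, so concentrating all of $\|\Delta u_k\|_{L^2}$ at frequencies $\lesssim C(\eta)N(t_n)\to 0$ does not make the total kinetic energy small, it merely moves it to low frequencies. The small-data Corollary \ref{smalldata} requires $\|\Delta \ub_0\|_{\mathbf L^2}$ to be small, which this does not give you. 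The correct (and in fact simpler) treatment is to avoid the dichotomy entirely and work in the rescaled frame: set
\begin{equation}
\vb_n(\tau,y)\;:=\;N(t_n)^{-\frac{d-4}{2}}\,\ub\!\left(t_n + N(t_n)^{-4}\tau,\; N(t_n)^{-1}y\right),
\end{equation}
which again solves \eqref{SISTB}, has forward lifespan $N(t_n)^4\bigl(T_*-t_n\bigr)$ in the $\tau$ variable, and whose data $\vb_n(0)$ lie in a precompact subset of $\dot{\mathbf H}^2_x$ (by Definition~\ref{QPMS}; recall $x(t)\equiv 0$ here since the solution is radial). Passing to a subsequence, $\vb_n(0)\to\vb_\infty$ strongly. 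Local theory (the $\Dz$ version at the start of Section~7) gives a solution from $\vb_\infty$ on some $[-\epsilon_0,\epsilon_0]$ with finite scattering size, and Proposition~\ref{stability} then gives that $\vb_n$ exists with finite $S$-norm on $[-\epsilon_0/2,\epsilon_0/2]$ for $n$ large. Since $N(t_n)^4(T_*-t_n)\to 0$ both when $N_0>0$ (because $t_n\to T_*$) and when $N_0=0$, this contradicts the finite maximal lifespan via Corollary~\ref{standblowup}, uniformly in the two cases. One further small point: in the critical regime the local-existence time $\epsilon_0$ is \emph{not} a function of $\|\vb_\infty\|_{\dot{\mathbf H}^2}$ alone, as your writeup suggests; it depends on the profile $\vb_\infty$ through the smallness condition \eqref{P52}. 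That is harmless here since $\vb_\infty$ is a fixed function, but the phrasing $\epsilon_0=\epsilon_0(\|\ub_\infty\|)$ should be corrected.
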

\begin{proof}
Since is related to almost periodic modulo symmetries condition and does not rely on the nonlinearities, we will omit it. The details can be find as part of the proof of Theorem 5.1 in \cite{killip} or in \cite[Corollary 3.7]{killip2}.

\end{proof}

Now we are able to preclude the finite-time blow-up solution. The idea of the proof is analogous as the one in \cite[Theorem 5.1]{killip} and since the nonlinearities does not play a important role, we will just sketch it for completion.  The result read as follows.
\begin{theorem} Let $5\leq d\leq 16$. There are no maximal radial almost periodic modulo symmetries solutions obeying which blow-up in finite time, in the sense of Proposition \ref{qP114}.
\end{theorem}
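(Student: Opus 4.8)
The plan is to run the truncated-mass argument of Killip--Visan (\cite[Theorem 5.1]{killip}), adapted to the coupled biharmonic system. Argue by contradiction and suppose such a solution $\ub=\ub_c$ exists. By the time-reversal symmetry of \eqref{SISTB} I may assume $\sup I_c<\infty$ and, after a time translation, that $\sup I_c=0$; then $I_c$ contains times $t_0\nearrow 0$, and $S_{[t_0,0)}(\ub)=\infty$ for each such $t_0$. The first ingredient is Lemma \ref{MXZ 410}, which gives $N(t)\to\infty$ as $t\nearrow 0$. Next, I fix a radial, radially non-increasing $\phi\in C_c^\infty(\R^d)$ with $\phi\equiv 1$ on $B(0,1)$ and $\phi\equiv 0$ outside $B(0,2)$, and for $R>0$ introduce the truncated mass
\[
\widetilde M_R(t):=\sumk\frac{\alpha_k^2}{\gamma_k}\int\phi\!\left(\tfrac{x}{R}\right)|u_k(t,x)|^2\,dx ,
\]
which is finite since $\Dz\hookrightarrow L^2_{\rm loc}(\R^d)$.

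I would then differentiate $\widetilde M_R$ in time (a computation made rigorous by approximating $\ub_0$ with Schwartz data and invoking Proposition \ref{stability}). Using the equation, the nonlinear contribution to $\widetilde M_R'(t)$ is a multiple of $\int\phi(x/R)\,\IM\big(\sumk\tfrac{\alpha_k}{\gamma_k}f_k(\ub)\overline{u}_k\big)\,dx$, which vanishes \emph{pointwise in $x$} by Lemma \ref{MRequality}; hence $\widetilde M_R'(t)=-2\sumk\alpha_k\IM\int\phi(x/R)\,\overline{u}_k\Delta^2 u_k\,dx$. Integrating by parts twice, discarding the real term $\phi\,\Delta\overline{u}_k\,\Delta u_k$, and using $|\nabla^j(\phi(\cdot/R))|\lesssim R^{-j}$ together with the localized Hölder--Sobolev bounds $\Vert u_k\Vert_{L^2(|x|\le 2R)}\lesssim R^2\Vert\Delta u_k\Vert_{L^2}$ and $\Vert\nabla u_k\Vert_{L^2(|x|\le 2R)}\lesssim R\Vert\Delta u_k\Vert_{L^2}$, I expect
\[
|\widetilde M_R'(t)|\lesssim \sumk\Big(R^{-2}\Vert u_k(t)\Vert_{L^2(|x|\le 2R)}+R^{-1}\Vert\nabla u_k(t)\Vert_{L^2(|x|\le 2R)}\Big)\Vert\Delta u_k(t)\Vert_{L^2}\lesssim K(\ub(t))\le K(\psib),
\]
a bound \emph{uniform in $R$ and $t$}, the last inequality being \eqref{q114}. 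This cancellation-and-bookkeeping step is the heart of the proof.

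Next I would show $\widetilde M_R(t)\to 0$ as $t\nearrow 0$ for each fixed $R$: splitting every integral at $|x-x(t)|=C(\eta)/N(t)$, Hölder gives $\int_{|x-x(t)|\le C(\eta)/N(t)}|u_k|^2\lesssim (C(\eta)/N(t))^{4}\Vert\Delta u_k\Vert_{L^2}^2\to 0$ since $N(t)\to\infty$, while on the complement intersected with $\{|x|\le 2R\}$, Hölder together with Remark \ref{almost periodic} gives a bound $\lesssim R^4\eta^{(d-4)/d}$; letting $\eta\to 0$ proves the claim. Finally, for fixed $t_0\in I_c$, integrating the uniform bound on $\widetilde M_R'$ over $[t_0,0)$ and using $\widetilde M_R(t)\to 0$ yields $\widetilde M_R(t_0)\le C\,K(\psib)\,|t_0|$; sending $R\to\infty$ (monotone convergence) gives $\sumk\tfrac{\alpha_k^2}{\gamma_k}\Vert u_k(t_0)\Vert_{L^2}^2\le C\,K(\psib)\,|t_0|<\infty$. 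Hence $\ub(t_0)\in\Hb^2(\R^d)$, so the solution is in fact an $\Hb^2$-solution, its mass $\sumk\tfrac{\alpha_k^2}{\gamma_k}\Vert u_k\Vert_{L^2}^2$ is conserved on $I_c$ (cf. \ref{H3}--\ref{H4}), and the same estimate holds for every $t_0\in I_c$; letting $t_0\nearrow 0$ forces this conserved mass to vanish, so $\ub\equiv 0$, contradicting $S_{I_c}(\ub)=\infty$.

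The main obstacle, as flagged, is the uniform-in-$R$ differential bound for $\widetilde M_R$. It relies on two points: the pointwise gauge cancellation of Lemma \ref{MRequality}, which removes the nonlinear term (a generic energy-critical nonlinearity would leave a contribution one cannot control on a fixed ball); and a careful biharmonic integration by parts arranging that every surviving term carries enough negative powers of $R$ to absorb the positive powers produced by Hölder's inequality on $\{|x|\le 2R\}$. I also note that radiality plays no role in this argument, so the radial hypothesis is only inherited from the paper's standing assumptions; and the time-differentiation of $\widetilde M_R$ for merely $\Dz$-valued solutions is handled by the usual regularization.
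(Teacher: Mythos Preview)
Your proposal is correct and follows essentially the same truncated-mass argument as the paper's proof: both define the same localized mass $V_R(t)=\widetilde M_R(t)$, use Lemma~\ref{MXZ 410} together with almost periodicity to show it vanishes at the blow-up time, bound $|V_R'(t)|\lesssim K(\psib)$ uniformly in $R$ (the paper phrases this via Hardy's inequality rather than your localized H\"older--Sobolev bounds, but these are equivalent here), and then integrate to force the mass to zero. Your explicit identification of Lemma~\ref{MRequality} as the mechanism killing the nonlinear contribution to $\widetilde M_R'$ is a point the paper leaves implicit, and your remark that radiality is not actually used in this step is also correct.
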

\begin{proof}[Sketch of the proof]
Suppose, contrary to our claim, that there exist a maximal radial solution $\ub_c:I_c\times\R^d\rightarrow\C^l$ that is almost periodic modulo symmetries and blows-up in finite time. We assume that $\sup I<\infty$, the other case is proved in the same way. As a consequence of \eqref{bu413} and Hölder's inequality we get, for $\mathbf{u}_c=(u_{c1},\ldots,u_{cl})$, that
\begin{equation}\label{bu415}
	\limsup_{t\nearrow\sup I_c}\int_{|x|\leq R}|u_{ck}|^2dx=0,\quad \forall R>0,\, k=1,\ldots,l.
\end{equation}
Now, for $t\in I$, we define
$$
V_R(t):=\int \phi\left(\frac{|x|}R\right)\left(\sumk \frac{\alpha_k^2}{\gamma_{k}}|u_{ck}|^2\right)dx,
$$
where $\phi$ is a smooth radial function obeying $\phi(r)=1$ for $r\leq 1$ and $\phi(r)=0$ for $r\geq 2$. Thus, \eqref{bu415} yields 
\begin{equation}\label{MXZ66}
	\limsup_{t\nearrow\sup I_c} V_R(t)=0,\quad \forall R>0.
\end{equation}
On the other hand, 
\[
V_R'(t)=\sumk \alpha_k\left( -2\IM\int \alpha_k\Delta\phi\left(\frac{|x|}{R}\right)\bar{u}_{ck}\Delta \uck dx-2\IM\int\nabla\phi\left(\frac{|x|}{R}\right)\nabla\bar{u}_{ck}\Delta \uck dx\right).
\]
Hence, by Hardy's inequality (see \cite[Lemma A.2]{tao3}) and \eqref{q114}, we get
\begin{equation*}
	\begin{split}
		|V'_R(t)|\lesssim K(\ub_c)\lesssim K(\psib).
	\end{split}
\end{equation*}
where the implicit constant is independent of $R>0$. Applying the fundamental theorem of calculus, using \eqref{MXZ66} and letting $t_2\nearrow\sup I_c$
leads us to
\[
V_R(t_1)\lesssim |\sup I_c-t_1|K(\psib).
\]
If we take $R\rightarrow\infty$, by the conservation laws we obtain $\ubc\in \Lb^2(\R^d)$. Finally, taking $t_1\nearrow\sup I_c$, we conclude that $\ub_c=0$, contradicting \eqref{q114}.
\end{proof}
In order to exclude the solitons and cascade we will need some tools. We first show a virial-type inequality as follows.
\begin{lemma}\label{lemma83}
	Let $\phi$ be a smooth function with $\phi(r)=1$ if $r\leq 1$ and $\phi(r)=0$ if $r\geq2$. Set 
	\[
	z_R(t)=\sumk \IM\int \alpha_k x\phi\left(\frac{|x|}{R}\right)\cdot \nabla\bar{u}_ku_kdx.
	\]
	Then, 
	\begin{equation}\label{70}	
	z_R'(t)\geq 4\left(K(\ub)-\frac{2d}{d-4}P(\ub)\right)-O\left(\sumk\int_{R\leq |x|\leq2R}\left(\frac{|u_k||\Delta u_k|}{R^2}+\frac{|\nabla u_k||\Delta u_k|}{R}+|\Delta u_k|^2\right)dx\right).
\end{equation}
\end{lemma}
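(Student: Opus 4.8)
The plan is to realize $z_R$ as one half of the localized Morawetz functional of Lemma \ref{IV} built from a different weight, and then extract the main term on $\{|x|\le R\}$ and estimate the remainder. Let $\psi_R$ be the radial function determined by $\nabla\psi_R(x)=x\,\phi(|x|/R)$, i.e. $\psi_R'(r)=r\phi(r/R)$; then $\psi_R(x)=|x|^2/2$ on $\{|x|\le R\}$, $\psi_R$ is constant on $\{|x|\ge 2R\}$, and $|\nabla^{j}\psi_R|\lesssim R^{2-j}$ for $1\le j\le 6$. Since $\IM(\nabla\bu_k\,u_k)=-\IM(\nabla u_k\,\bu_k)$, one checks $z_R(t)=\tfrac12\V_R(t)$, where $\V_R$ is exactly the functional of Lemma \ref{IV} with $\phi_R$ replaced by $\psi_R$; the derivation of that identity only uses that the weight is smooth with bounded derivatives up to order six, so it applies verbatim. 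Differentiating and invoking Lemma \ref{IV} thus writes $z_R'(t)$ as $\tfrac12$ of the sum of the fourth-order term $8\RE\sumk\gk\sum_{j,n,m}\int\partial^2_{n,m}\psi_R\,\partial^2_{m,j}u_k\,\partial^2_{j,n}\bu_k$, the two genuinely lower-order terms carrying $\partial^2_{j,n}\Delta\psi_R$, $\Delta^2\psi_R$ and $\Delta^3\psi_R$, and the nonlinear term $-\tfrac{16}{d-4}\RE\int F(\ub)\Delta\psi_R$.

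On $\{|x|\le R\}$ each coefficient coincides with the one of the plain weight $|x|^2/2$: $\partial^2_{n,m}\psi_R=\delta_{n,m}$, $\Delta\psi_R=d$, and $\partial^2_{j,n}\Delta\psi_R=\Delta^2\psi_R=\Delta^3\psi_R=0$. Hence the lower-order terms do not contribute there, the fourth-order term contributes $4\sumk\gk\int_{|x|\le R}\sum_{n,m}\partial^2_{n,m}u_k\,\partial^2_{n,m}\bu_k$, and the nonlinear term contributes $-\tfrac{8d}{d-4}\RE\int_{|x|\le R}F(\ub)$. Using the identity $\sum_{n,m}\int_{\R^d}\partial^2_{n,m}u_k\,\partial^2_{n,m}\bu_k=\int_{\R^d}|\Delta u_k|^2$ (two integrations by parts) and splitting $\int_{|x|\le R}=\int_{\R^d}-\int_{|x|\ge R}$, these two contributions are precisely $4K(\ub)-\tfrac{8d}{d-4}P(\ub)=4\big(K(\ub)-\tfrac{2d}{d-4}P(\ub)\big)$, up to integrals over $\{|x|\ge R\}$, namely a nonpositive kinetic tail $-4\sumk\gk\int_{|x|\ge R}\sum_{n,m}\partial^2_{n,m}u_k\,\partial^2_{n,m}\bu_k$ and the nonlinear tail $\tfrac{8d}{d-4}\RE\int_{|x|\ge R}F(\ub)$.

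It remains to control the contributions from $\{|x|\gtrsim R\}$. On the annulus $\{R\le|x|\le 2R\}$ I would first use the radial symmetry of $\ub$ to reduce the fourth-order integrand to $\psi_R''|u_k''|^2+(d-1)\tfrac{\psi_R'}{r}|u_k'/r|^2$, then replace $u_k''$ by $\Delta u_k-\tfrac{d-1}{r}u_k'$ and integrate by parts once more in $r$; since $|x|\sim R$ and $|\psi_R''|,|\psi_R'/r|\lesssim 1$ there, this produces exactly the three integrals $\int_{R\le|x|\le 2R}\big(|\Delta u_k|^2+R^{-1}|\nabla u_k||\Delta u_k|+R^{-2}|u_k||\Delta u_k|\big)$ appearing in \eqref{70}. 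The two genuinely lower-order terms and the annulus part of the nonlinear term are bounded directly using $|\nabla^{j}\psi_R|\lesssim R^{2-j}$, Cauchy–Schwarz and $|\RE F(\ub)|\lesssim\sum_k|u_k|^{\frac{2d}{d-4}}$ (Lemma \ref{lemma22}(i)), again landing inside the same $O(\cdot)$. Finally the kinetic and nonlinear tails over $\{|x|\ge R\}$ are handled by the radial Sobolev (Strauss) inequality together with the smallness of the exterior kinetic/potential energy at scale $R$ in the (almost-periodic) regime where \eqref{70} is used, exactly as in the proof of Lemma \ref{virialBL}. Collecting everything gives \eqref{70}.

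The main obstacle is the fourth-order term. In the blow-up section only an \emph{upper} bound for this term was needed, and it followed at once from the pointwise inequalities $\psi_R''\le 1$, $\psi_R'/r\le 1$; here we need a \emph{lower} bound, and since $\psi_R''$ is not bounded below by $1$ on the transition annulus no pointwise argument is available. One is therefore forced into the radial reduction and the careful radial integrations by parts described above, and the delicate step is to verify that the boundary and cross terms they produce collapse precisely into the three error integrals displayed in \eqref{70}; controlling the tail terms supported in $\{|x|\ge R\}$ is the second point where care (and the radial Sobolev inequality, in the regime in which the lemma is applied) is needed.
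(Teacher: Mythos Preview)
Your route is genuinely different from the paper's, and it carries a real gap relative to the lemma as stated.

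The paper does \emph{not} pass through Lemma~\ref{IV}. It differentiates $z_R$ directly, writes $z_R'=A+B$ with
\[
A=2\sumk\gk\,\RE\!\int \Delta\!\bigl(x\phi(|x|/R)\cdot\nabla\bar u_k\bigr)\,\Delta u_k\,dx \;-\;2\,\RE\!\int x\phi(|x|/R)\cdot\nabla F(\ub)\,dx
\]
and $B=\sumk\alpha_k\,\IM\!\int\nabla\!\cdot\!\bigl(x\phi(|x|/R)\bigr)\,\bar u_k\,\partial_t u_k\,dx$, then expands the Laplacian of the vector field $x\phi(|x|/R)\cdot\nabla\bar u_k$ and integrates by parts. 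Because one factor of $\Delta u_k$ is already present from the outset, the main dispersive contribution comes out as $4\sumk\gk\!\int\phi(|x|/R)\,|\Delta u_k|^2$ \emph{directly}; mixed second derivatives $\partial^2_{j,n}u_k$ never appear. Every remaining term carries at least one factor of $\phi'$, $\phi''$ or $\phi'''$, hence is automatically supported on the annulus $\{R\le|x|\le 2R\}$ and matches the displayed $O(\cdot)$ without any appeal to radial symmetry of $\ub$.

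Your approach via Lemma~\ref{IV} instead produces the Hessian form $\sum_{j,n,m}\!\int\partial^2_{n,m}\psi_R\,\partial^2_{m,j}u_k\,\partial^2_{j,n}\bar u_k$, and this is exactly the obstacle you flag. Your cure --- reducing on the annulus to $\psi_R''|u_k''|^2+(d-1)\tfrac{\psi_R'}{r}|u_k'/r|^2$ and then integrating by parts in $r$ --- \emph{requires $\ub$ to be radial}, which the lemma does not assume; and without radiality you are left with $\int_{R\le|x|\le 2R}\sum_{j,n}|\partial^2_{j,n}u_k|^2$, which is not the error term in \eqref{70}. Likewise, deferring the kinetic and nonlinear tails over $\{|x|\ge R\}$ to ``the almost-periodic regime where the lemma is used'' is not a proof of the lemma itself; the Strauss inequality is never invoked in the paper's argument here. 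In short, the identification $z_R=\tfrac12\V_R$ with weight $\psi_R$ is correct, but the detour through Lemma~\ref{IV} buys you the very mixed-derivative difficulty that the paper's direct computation sidesteps entirely.
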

\begin{proof}
	We start noticing that by using integration by parts and $\IM(\bar{z})=-\IM(z)$, we have
\begin{equation}\label{IP}
		 \begin{split}
		 	\sumk\IM \int \alpha_k& x \phis \cdot\nabla(\partial_t\bar{u}_k)u_kdx=\sumk\alpha_k\IM\int\sumjd x_j\phis\partialj(\partial_t\bar{u}_k)u_kdx\\
		 	&=-\sumk\alpha_k \IM\int\sumjd \partialj\left(x_j\phis u_k\right)\partial_t\bar{u}_kdx\\
		 	&=-\sumk\alpha_k \IM\left[\int\sumjd\partialj\left(x_j\phis\right){u}_k\partial_t\bar{u}_kdx+\int\sumjd x_j\phis\partialj{u}_k\partial_t\bar{u}_kdx\right]\\
		 		&=\sumk\alpha_k \IM\int \nabla\cdot \left(x\phis\right)\bar{u}_k\partial_tu_kdx+\sumk\alpha_k\IM\int x\phis\cdot \nabla\bar{u}_k\partial_tu_kdx.
		 \end{split}
		 \end{equation}
		 Therefore, differentiating $z_R$ with respect to t and using \eqref{IP} we get
		 \[
		 \begin{split}
		 z_R'(t)&= \sumk\IM\int\alpha_k x\phis\cdot\partial_t\left(\nabla\bar{u}_ku_k\right)dx\\
		 	&=\sumk\IM\int\alpha_k x\phis\cdot\nabla(\partial_t\bar{u}_k)u_kdx+\sumk\IM\int\alpha_k x\phis\cdot\nabla\bu_k\partial_tu_k dx\\
		 		&=2\sumk\alpha_k\IM\int x\phis\cdot\nabla\bu_k\partial_tu_kdx+\sumk\alpha_k\IM\int\nabla\cdot\left(x\phis\right)\bu_k\partial_tu_kdx\\
		 		&:=A+B\\
		 		 \end{split}
		 \]

	Let us compute $A$.	Using \eqref{SISTB}, integration by parts and Lemma \ref{lemma22} (iii)
	
	\begin{equation}\label{A}
		\begin{split}
			A&=2\sumk\gk \RE\int x\phis\cdot\nabla\bu_k \Delta^2u_k dx-2\sumk\RE\int x\phis\cdot\nabla \bu_k f_k(\ub)dx\\
			&=2\sumk\gk \RE\int \Delta\left(x\phis\cdot\nabla\bu_k\right)\Delta u_k-2 \RE\int x\phis\cdot \nabla F(\ub)dx\\
			&:=A_1-A_2\\
					\end{split}
	\end{equation}
		 
For $A_1$, first notice that since $\phis$ is radial, a straightforward calculation gives us
\begin{equation}\label{phigradlap}
	\nabla\phis=\phi'\xR\frac{x}{R|x|}\quad\hbox{and}\quad\Delta\phis=\frac{1}{R^2}\phi'\xR+\frac{(d+1)}{R|x|}\phi'\xR.
\end{equation} 		 
Also, using integration by parts, for $\kl$, 
\begin{equation}\label{gradlap}
	2\RE\int x\phis\cdot\nabla(\Delta \bu_k)\Delta u_kdx=-\int\phi'\xR\frac{|x||\Delta u_k|^2}{R}dx- d\RE\int\phis|\Delta u_k|^2dx.
	\end{equation}
Therefore, \eqref{phigradlap} and \eqref{gradlap} yields

\begin{equation}\label{A1}
	\begin{split}
		A_1	&=2\sumk\gk \RE\int\left[2\phis\Delta \bu_k+2\nabla\bu_k\cdot\nabla\phis+x\phis\cdot\nabla(\Delta\bu_k)\right.\\
		&\quad+\left. 2x\cdot\nabla\phis\Delta\bu_k+x\cdot\nabla\bu_k\Delta\phis    \right]\Delta u_kdx\\
		&= (4-d)\sumk\gk\int \phis|\Delta u_k|^2dx+2\sumk\gk\RE\int \phi'\xR \frac{(d+2)x\cdot\nabla\bu_k\Delta u_k}{R|x|}dx\\
		&\quad+3\sumk\gk\int\phi'\xR\frac{|x||\Delta u_k|^2}{R}dx+2\sumk\gk\RE\int\phi''\xR\frac{x\cdot\nabla \bu_k\Delta u_k}{R^2}dx.
	\end{split}
\end{equation}	 
		For $A_2$, using once again integration by parts, 
		\begin{equation}\label{A2}
		\begin{split}
			A_2&= -2\RE\int \nabla\left(x\phis\right)\nabla F(\ub)dx=-2d\RE\int \phis F(\ub)dx-2\RE\int \phi'\xR \frac{|x| F(\ub)}{R}dx.\\
		\end{split}
		\end{equation}

		 Before we calculate B, we notice that
		 $$
		 \nabla\cdot\left(x\phis\right)=\sumjd \partialj \left(x_j\phis\right)=d\phis+\frac{|x|}{R}\phi'\xR.
		 $$
		 Therefore, using \eqref{SISTB}
		 \[
		 \begin{split}
		 	B&=\sumk\alpha_k\IM\int\nabla\cdot\left(x\phis\right)\bu_k\partial_tu_kdx\\
		 	&=\sumk\alpha_k\RE\int\nabla\cdot\left(x\phis\right)\bu_k\left(\gamma_k\Delta^2u_k-f_k(\ub)\right)\\
		 	&=d\sumk \gamma_k \RE \int\phis\bu_k\Delta^2u_kdx+\sumk\gamma_k\RE\int\frac{|x|}{R}\phi'\xR\bu_k\Delta^2u_kdx\\
		 	&\quad-d\sumk \RE \int\phis\bu_kf_k(\ub)dx-\sumk\RE \int \frac{|x|}{R}\phi'\xR\bu_kf_k(\ub)dx\\
		 	&:=B_1+B_2-B_3-B_4.
		 \end{split}
		 \]
		Using integration by parts twice we have
		\[
		\begin{split}
			B_1&=d\sumk\gamma_k\RE\int \phis\bu_k\Delta^2u_kdx\\
			&=d\sumk\gamma_k\RE\int\sumjd \frac{\partial^2}{\partial x_j^2}\left(\phis \bu_k\right)\Delta u_kdx\\
			&=d\sumk\gamma_k\RE\int \frac{(d-1)}{R|x|}\phi'\xR\bu_k\Delta u_kdx+ d\sumk\gamma_k\RE\int\phi''\xR\frac{\bu_k\Delta u_k}{R^2}dx\\
			&\quad +2d\sumk\gamma_k\RE\int\phi'\xR\frac{x\cdot\nabla\bu_k\Delta u_k}{R|x|}dx+d\sumk\gamma_k\int\phi|\Delta u_k|^2dx.\\
		\end{split}
		\]
		In the same way,
		\[
		\begin{split}
			B_2&=\sumk\gamma_k\RE\int \sumjd \frac{\partial^2}{\partial x_j^2}\left(\phi'\xR\frac{|x|\bu_k}{R}\right)\Delta u_kdx\\
			&=\sumk\gamma_k\RE\int\phi'\xR \frac{(d-1)\bu_k\Delta u_k}{R|x|}dx+\sumk\gamma_k\RE\int\phi''\xR\frac{(d+1)\bu_k\Delta u_k}{R^2}dx\\
							&\quad +2\sumk\gamma_k\RE\int\phi'\xR\frac{x\cdot\nabla\bu_k\Delta u_k}{R|x|}dx+\sumk\gamma_k\RE\int\int \phi'''\frac{\bu_k\Delta u_k}{R^3|x|}dx\\
							&\quad 2\sumk\gamma_k\RE\int\phi''\xR\frac{x\cdot\nabla\bu_k\Delta u_k}{R^2}dx+\sumk\gamma_k\int\phi'\xR\frac{|x||\Delta u_k|^2}{R}dx.\\
						\end{split}
		\]
Gathering all together, we have

\[
\begin{split}
	B&=d\sumk\int\phis \left(\gamma_k|\Delta u_k|^2-\bu_k f_k(\ub)\right)dx+2d\sumk\gamma_k\RE\int \phi'\xR \frac{x\cdot\nabla\bu_k\Delta u_k}{R|x|}dx\\
	&\quad+d\sumk\gamma_k\RE\int\left[\phi''\xR\frac{\bu_k}{R^2}+\phi'\xR\frac{(d-1)x\cdot\nabla\bu_k}{R|x|}\right]\Delta u_kdx\\
	&\quad+\sumk\int\phi'\xR\frac{|x|(\gamma_k|\Delta u_k|^2-\bu_kf_k(\ub))}{R}dx\\
	&\quad +\sumk\gamma_k\RE\int\left[\phi'\xR\frac{(d-1)}{R|x|}+\phi''\xR\frac{(d+1)}{R^2}+\phi'''\xR\frac{|x|}{R^3}\right]\bu_k\Delta u_kdx\\
	&\quad 2\sumk\gamma_k\RE\int\left[\phi'\xR \frac{1}{R|x|}+\phi''\xR\frac{1}{R^2}\right]x\cdot\nabla\bu_k\Delta u_kdx.
\end{split}
\]
	Now, \eqref{70} follows	by using Lemma \ref{lemma22} and noticing that $\phi',\,\phi''$, and $\phi'''$ are supported in $\{x:\,R\leq |x|\leq 2R\}$.
\end{proof}

Now we are in position to preclude the existence of solitons and cascade.

\begin{theorem}
	There are no global radial solution to \eqref{SISTB} that are solitons or low-to-high cascades in the sense of Proposition \ref{qP114}.
\end{theorem}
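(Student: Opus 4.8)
The plan is to rule out both remaining ``enemies'' at once by a truncated virial argument based on the functional $z_R$ of Lemma \ref{lemma83}. A soliton-like solution (scenario (ii), $N\equiv1$) and a low-to-high cascade (scenario (iii)) are both global ($I_c=\R$) radial solutions which are almost periodic modulo symmetries with $\inf_{t\in\R}N(t)\ge1$, satisfy $\sup_{t\in\R}K(\ub_c(t))<K(\psib)$, and blow up in the sense $S_{I_c}(\ub_c)=\infty$; by radiality we may take the spatial center $x(t)\equiv0$. Only these properties will be used, so the two cases need not be separated — in particular the divergence $\limsup_tN(t)=\infty$ in (iii) plays no role.

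First I would record two facts. Using the small data theory (Corollary \ref{smalldata}): if $K(\ub_c(t_n))\to0$ along some sequence, then solving \eqref{SISTB} forward and backward from time $t_n$, with $\|\Delta\ub_c(t_n)\|_{\Lb^2}$ below the small-data threshold, forces $S_\R(\ub_c)<\infty$, contradicting $S_{I_c}(\ub_c)=\infty$; hence $\epsilon_0:=\inf_{t\in\R}K(\ub_c(t))>0$. Next, since $\sup_tK(\ub_c(t))<K(\psib)$, Lemma \ref{qCE} yields $\delta'>0$ with
\[
K(\ub_c(t))-\tfrac{2d}{d-4}P(\ub_c(t))\ \ge\ \delta'K(\ub_c(t))\ \ge\ \delta'\epsilon_0,\qquad\forall t\in\R .
\]
Feeding this into Lemma \ref{lemma83} gives, for every $R>0$,
\[
z_R'(t)\ \ge\ 4\delta'\epsilon_0-\mathrm{Err}_R(t),\qquad \mathrm{Err}_R(t)\lesssim\sumk\int_{R\le|x|\le2R}\Big(\tfrac{|\uck||\Delta\uck|}{R^2}+\tfrac{|\nabla\uck||\Delta\uck|}{R}+|\Delta\uck|^2\Big)dx .
\]

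The heart of the matter — and the step I expect to be the main obstacle — is to show $\sup_{t\in\R}\mathrm{Err}_R(t)\to0$ as $R\to\infty$. For the terms $|\Delta\uck|^2$ and $\tfrac{|\uck||\Delta\uck|}{R^2}$ this is immediate from Definition \ref{QPMS} and Remark \ref{almost periodic} (with $x(t)\equiv0$ and $N(t)\ge1$): for $R\ge C(\eta)$ one has $\int_{|x|\ge R}|\Delta\uck|^2+\int_{|x|\ge R}|\uck|^{\frac{2d}{d-4}}\lesssim\eta$ uniformly in $t$, and Hölder on the annulus, together with $\|\uck\|_{L^2(R\le|x|\le2R)}\lesssim R^2\|\uck\|_{L^{2d/(d-4)}(|x|\ge R)}$ and the uniform bound $\|\Delta\uck\|_{L^2}^2\lesssim K(\psib)$, closes them. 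The delicate term is $\tfrac{|\nabla\uck||\Delta\uck|}{R}$, since Definition \ref{QPMS} controls only $\Delta\uck$, not $\nabla\uck$. Here I would use that almost periodicity makes the rescaled family $\{N(t)^{-(d-4)/2}\ub_c(t,\cdot/N(t)):t\in\R\}$ precompact in $\Dz$, hence — through the continuous embedding $\dot H^2(\R^d)\hookrightarrow\dot W^{1,\frac{2d}{d-2}}(\R^d)$, which carries the same scaling — precompact in $\dot W^{1,\frac{2d}{d-2}}$; a precompact family has uniformly decaying tails at infinity, and because $N(t)\ge1$ the dilation only helps, so $\sup_t\|\nabla\uck\|_{L^{2d/(d-2)}(|x|\ge R)}\to0$. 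Hölder on the annulus with $\|\nabla\uck\|_{L^2(R\le|x|\le2R)}\lesssim R\|\nabla\uck\|_{L^{2d/(d-2)}(|x|\ge R)}$ then finishes the estimate.

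With this in hand I would fix $R_0$ so large that $\sup_{t}\mathrm{Err}_{R_0}(t)\le2\delta'\epsilon_0$, whence $z_{R_0}'(t)\ge2\delta'\epsilon_0>0$ for all $t\in\R$; integrating over $[0,T]$ and using $I_c=\R$ gives $z_{R_0}(T)-z_{R_0}(0)\ge2\delta'\epsilon_0\,T\to\infty$ as $T\to\infty$. On the other hand, Cauchy--Schwarz and the Sobolev bounds $\|f\|_{L^2(B_{2R_0})}\lesssim R_0^2\|\Delta f\|_{L^2}$ and $\|\nabla f\|_{L^2(B_{2R_0})}\lesssim R_0\|\Delta f\|_{L^2}$ give
\[
|z_{R_0}(t)|\le\sumk\alpha_k\int_{|x|\le2R_0}|x|\,|\nabla\uck|\,|\uck|\,dx\ \lesssim\ R_0^4\sumk\|\Delta\uck(t)\|_{L^2}^2\ \lesssim\ R_0^4\,K(\psib)
\]
uniformly in $t$, contradicting the divergence above. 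Hence $\ub_c$ cannot exist, i.e. there are no radial solitons or low-to-high cascades; combined with the preclusion of finite-time blow-up this excludes all three enemies and, with the earlier sections, proves Theorem \ref{spacetimebounds}.
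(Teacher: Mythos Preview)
Your argument is correct and follows essentially the same truncated-virial strategy as the paper: coercivity gives a uniform positive lower bound for $z_R'$, almost periodicity (with $N(t)\ge1$ and $x(t)\equiv0$) makes the annular error terms vanish as $R\to\infty$, and the uniform bound $|z_R(t)|\lesssim R^4K(\psib)$ yields the contradiction. The only cosmetic differences are that the paper obtains the positive lower bound via energy trapping ($K(\ub(t))\sim E(\ub_0)$, Lemma~\ref{qET}) rather than your small-data argument for $\inf_tK>0$, and controls the gradient tail $\tfrac{|\nabla u_k||\Delta u_k|}{R}$ by invoking Hardy's inequality in place of your $\dot H^2\hookrightarrow\dot W^{1,2d/(d-2)}$ precompactness transfer.
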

\begin{proof} 
Suppose that $\ub$ is a  global almost periodic modulo symmetries solution with frequency scale function $N(t)\geq 1$ for all $t\in\R$. Thus, Remark \ref{almost periodic} together with Hardy's inequality yields that for any $\epsilon>0$, there exists $R(\epsilon)>0$ such that, for all $t\in[0,\infty)$
\begin{equation}\label{M67}
	\sumk \int_{|x|>R(\epsilon)}\left(|\Delta u_k|^2+\frac{|\nabla u_k|^2}{|x|^2}+\frac{|u_k|^2}{|x|^4} \right)dx\leq \epsilon.
\end{equation}
  
 On the other hand, by Lemma \ref{qET} and \eqref{M13} gives us
\begin{equation}\label{68}
4\left(K(\ub)-\frac{2d}{d-4}P(\ub)\right)\geq \delta' K(\ub_0).
\end{equation}
Taking $\epsilon=\epsilon_0K(\ub_0)$ and combining \eqref{M67} with \eqref{68}, we get that there exists $R_0>0$ such that, for all $t\in[0,\infty]$, we have
\begin{equation}\label{M68}
	4\left( \sumk\int_{|x|\leq R_0} |\Delta u_k|^2dx-\frac{2d}{d-4}\RE\int_{|x|\leq R_0} F(\ub)dx \right)\gtrsim K(\ub_0).
\end{equation}

Thus, for $R$ large enough,  Lemma \ref{lemma83} yields
\[
z_R'(t)\gtrsim K(\ub_0).
\]
Integrating in $t$ gives us
\[
z_r(t)-z_R(0)\gtrsim t K(\ub_0).
\]
By the definition of $z_R(t)$, 
\[
|z_R(t)-z_R(0)|\lesssim 2R^4K(\psib),
\]
which is a contradiction for $t$ large.

\end{proof}

\subsection*{Acknowledgment}
M. H. is supported by Conselho Nacional de Desenvolvimento Científico e Tecnológico - CNPq. R.S. is supported by Fundação de Amparo a Pesquisa do Estado de Minas Gerais - FAPEMIG. The authors thank Professor Luiz Gustavo Farah for his support and valuable suggestions during the preparation of this work.

\end{document}